\newtheorem{thm}{Theorem}[section]
\newtheorem{lem}[thm]{Lemma}
\newtheorem{cor}[thm]{Corollary}
\newtheorem{prop}[thm]{Proposition}
\newtheorem{defn}[thm]{Definition}
\newtheorem{rem}[thm]{Remark}
\def\R{\mathbb{R}}
\def\la{\langle}
\def\ra{\rangle}
\def\2q{{\frac{2}{|B|}}}
\newcommand{\N}{\mathbb{N}}
\newcommand{\bmo}{\mathrm{bmo}}
\newcommand{\sgn}{\operatorname{sgn}}
\newcommand{\supp}{\operatorname{supp}}
\newcommand\fat[1]{\ThisStyle{\ooalign{%
  \kern.46pt$\SavedStyle#1$\cr\kern.33pt$\SavedStyle#1$\cr%
  \kern.2pt$\SavedStyle#1$\cr$\SavedStyle#1$}}}
\renewcommand{\S}{{\mathscr{S}}}
\renewcommand{\leq}{\leqslant}
\renewcommand{\geq}{\geqslant}
\newcommand{\phase}{\varphi}
\newcommand{\m}[1]{\begin{equation*}
\begin{split}
#1
\end{split}
\end{equation*}}
\newcommand{\nm}[2]{\begin{equation}\label{#1}
\begin{split}
#2
\end{split}
\end{equation}}
\newcommand{\abs}[1]{\left|#1\right|}
\newcommand{\set}[1]{\left\{#1\right\}}
\newcommand{\brkt}[1]{\left(#1\right)}
\newcommand{\jap}[1]{\left\langle #1\right\rangle}
\newcommand{\norm}[1]{\left\Vert#1\right\Vert}
\newcommand{\dd}{\,\mathrm{d} }
\newcommand{\ddd}{\,\text{\rm{\mbox{\dj}}}}
\renewcommand{\d}{\partial}
\begin{document}

\title[Local and global boundedness] {Local and global estimates for hyperbolic equations in Besov-Lipschitz and Triebel-Lizorkin spaces}
\author[A.~Israelsson]{Anders Israelsson}
\address{Department of Mathematics, Uppsala University, SE-751 06 Uppsala, Sweden}
\email{anders.israelsson@math.uu.se}
\author[S.~Rodr\'iguez-L\'opez]{Salvador Rodr\'iguez-L\'opez}
\address{Department of Mathematics, Stockholm University, SE-106 91 Stockholm, Sweden}
\email{s.rodriguez-lopez@math.su.se}

\author[W.~Staubach]{Wolfgang Staubach}
\address{Department of Mathematics, Uppsala University, SE-751 06 Uppsala, Sweden}
\email{wulf@math.uu.se}
\date{\today}
\thanks{The second author is partially supported by the Spanish Government grant MTM2016-75196-P.
The third author is partially supported by a grant from the Crafoord foundation and by a grant from G. S. Magnusons fond, grant number MG2015-0077.}

\subjclass[2010]{35S30, 42B20, 35L05, 35L15, 42B35.}
\keywords{Besov-Lipschitz spaces, Triebel-Lizorkin spaces, Fourier integral operators, Hyperbolic equations}
\begin{abstract}
In this paper we establish optimal local and global Besov-Lipschitz and Triebel-Lizorkin estimates for the solutions to linear hyperbolic partial \linebreak differential equations. These estimates are based on local and global estimates for Fourier integral operators that span all possible scales (and in particular both \linebreak Banach and quasi-Banach scales) of Besov-Lipschitz spaces $B^s_{p,q}(\R^n)$, and certain Banach and quasi-Banach scales of Triebel-Lizorkin spaces $F^s_{p,q}(\R^n)$.
\end{abstract}

\maketitle

\section{Introduction}
\noindent 
Estimates for the solution of linear hyperbolic partial differential equations, in function spaces other than the $L^p$ spaces, go back to the 1970's. In this context we would like to mention a couple of results, that although not directly relevant to obtaining the results of the current paper, constitute examples of estimates in function spaces that are of interest here, namely the Besov-Lipschitz and Triebel-Lizorkin spaces.\\
Consider the following Cauchy problem for the wave equation in $\R^{n+1}$,

\begin{equation}\label{the first wave}
     \left\{ \begin{array}{lll}
         \partial^2_t u (t,x)-\Delta_x u(t,x)=0, & t\not=0, \, x\in\R^n, \\
         u(0,x)=f_0 (x),\\
         \partial_t u(0,x)= f_1(x).
      \end{array} \right.
\end{equation}\hspace*{1cm}

In \cite{Brenner} P. Brenner showed that for a fixed time $\tau>0$ the solution to this problem
verifies the estimate
\begin{equation}\label{brenners global besov estimate for the wave equation}
\Vert u (\tau,\cdot)\Vert_{B^{s}_{p,q}(\R^n)}\leq C_\tau \left ( \Vert f_0\Vert_{B^{s+\nu}_{p',q}(\R^n)}+ \Vert f_1\Vert_{B^{s+\nu-1}_{p',q}(\R^n)} \right ),
\end{equation}
where $s\in \R$, $p\in [2,\infty)$, $\displaystyle p'=\frac{p}{p-1}$, $q\in [1,\infty]$, and \linebreak$\displaystyle (n+1)\abs{\frac{1}{p}-\frac{1}{2}}\leq \nu\leq 2n\abs{\frac{1}{p}-\frac{1}{2}}$.\\

\noindent In \cite {Kapitanskii} L. V. Kapitanski\u\i, extended and improved the results of Brenner to the range $p\in [2,\infty]$ and $\displaystyle(n-1)\abs{\frac{1}{p}-\frac{1}{2}}\leq \nu\leq n\abs{1-\frac{2}{p}}.$ In fact Kapitanski\u\i's result also applies to more general variable coefficient second order strictly hyperbolic equations, and also is valid in the realm of Triebel-Lizorkin spaces for the same range of parameters.\\

Later, J. Ginibre and G. Velo \cite{GV} established Strichartz-type estimates for homogeneous Besov-Lipschitz and Triebel-Lizorkin spaces which are useful in the applications to non-linear hyperbolic problems.\\

\noindent However, the pioneering results of Brenner's were achieved by establishing $L^p\to L^q$ estimates for a class of Fourier integral operators that appear naturally in the construction of solutions (or parametrises) for strictly hyperbolic partial differential equations.\\

\noindent The next breakthrough was made in \cite{SSS}, where A. Seeger, C. Sogge and E. Stein showed that for every smooth spatial cut-off function $\chi$ one has the estimate

\begin{equation}\label{SSS local sobolev estimate for the wave equation}
\Vert \chi\,u (\tau,\cdot)\Vert_{H^{s,p}(\R^n)}\leq C_\tau\left ( \Vert f_0\Vert_{H^{s+\nu,p}(\R^n)}+ \Vert f_1\Vert_{H^{s+\nu-1, p}(\R^n)} \right ),
\end{equation}
for $s\in\R$, $\displaystyle\nu=(n-1)\abs{\frac{1}{p}-\frac{1}{2}}$ and $p\in (1,\infty).$\\

As a consequence of this, one has
\begin{equation*}
\Vert \chi\,u (\tau,\cdot)\Vert_{B^{s}_{2,2}(\R^n)}\leq C_\tau\left ( \Vert f_0\Vert_{B^{s}_{2,2}(\R^n)}+ \Vert f_1\Vert_{B^{s-1}_{2, 2}(\R^n)} \right ),
\end{equation*}
\noindent with $s\in\R$. Moreover, in \cite{SSS} it was also proven that
\begin{equation*}
\Vert \chi\,u (\tau,\cdot)\Vert_{B^{s}_{\infty,\infty}(\R^n)}\leq C_\tau\left ( \Vert f_0\Vert_{B^{s+\nu}_{\infty,\infty}(\R^n)}+ \Vert f_1\Vert_{B^{s+\nu-1}_{\infty,\infty}(\R^n)} \right ),
\end{equation*}
for $\displaystyle \nu=\frac{n-1}{2}$. This is of course nothing but the Lipschitz space estimate in \cite{SSS}. \\

In this paper we establish the global estimate
\begin{equation*}
\Vert u (\tau,\cdot)\Vert_{B^{s}_{p,q}(\R^n)}\leq C_\tau \left ( \Vert f_0\Vert_{B^{s+\nu}_{p,q}(\R^n)}+ \Vert f_1\Vert_{B^{s+\nu-1}_{p,q}(\R^n)} \right ),
\end{equation*}
for the solution to \eqref{the first wave}, where $s\in \R$, $\displaystyle p\in \left (\frac{n}{n+1},\infty \right ]$, $q\in (0,\infty]$, and $\displaystyle \nu= (n-1)\left |\frac{1}{p}-\frac{1}{2}\right |,$ where the ranges of these parameters are optimal.\\

\noindent Moreover we also show that the local version of the above estimate is valid for $s\in \R$, $p\in (0,\infty]$ and $q\in (0,\infty)$. Furthermore we show the following global estimate for the Triebel-Lizorkin spaces
\begin{equation*}
\Vert u (\tau,\cdot)\Vert_{F^{s}_{p,q}(\R^n)}\leq C_\tau \left ( \Vert f_0\Vert_{F^{s+\nu}_{p,q}(\R^n)}+ \Vert f_1\Vert_{F^{s+\nu-1}_{p,q}(\R^n)} \right ),
\end{equation*}
where $s\in \R$, $\displaystyle p\in \left (\frac{n}{n+1},\infty \right ]$, $\min (2,p)\leq q\leq \max (2,p)$, and $\displaystyle \nu= (n-1)\left|\frac{1}{p}-\frac{1}{2}\right |.$

At the local level, we can improve the range of $p$ in the above estimate to $(0,\infty)$.
 However if one assumes that $\displaystyle \nu<- (n-1)\abs{\frac{1}{p}-\frac{1}{2}}$, then the range of the Triebel-Lizorkin estimate above is improved to the optimal range $p, q\in (0,\infty]$ in the local \smallskip case, and $\displaystyle p\in \left (\frac{n}{n+1},\infty \right ]$, $q\in (0,\infty]$ in the global case. Moreover, as was done in \cite{Kapitanskii} and \cite{SSS}, we also establish similar estimates for more general variable coefficient hyperbolic PDEs. \\

All of these results are achieved through proving sharp local and global estimates for Fourier integral operators of the form
\begin{equation}\label{definition of FIO}
    T^\varphi_ a f(x)=  \int_{\R^n} a(x,\xi) \, e^{i\varphi(x,\xi)}\, \widehat{f}(\xi)\, \ddd \xi,
\end{equation}
with smooth amplitudes $a(x, \xi)\in S^m (\R^n)$ (see Definition \ref{symbol class Sm}), on Besov-Lipschitz and Triebel-Lizorkin spaces. The interest in these spaces stems from the fact that they contain spaces such as Lebesgue spaces, Lipschitz spaces (H\"older spaces), Sobolev spaces, Hardy spaces and BMO spaces, as special cases. Moreover these spaces also contain scales that are quasi-Banach and indeed one of the purposes of the this paper is to extend the estimates for the solutions of the wave equation to the quasi-Banach setting. It turns out that in the context of global estimates for Fourier integral operators, the restriction for $p$ being in $\displaystyle \left (\frac{n}{n+1},\infty \right ]$, is sharp for the validity of global estimates, since we can produce counter-examples to the global boundedness of the Fourier integral operators for $\displaystyle p \in \left (0, \frac{n}{n+1} \right ]$. However, if one is looking for local estimates, as in for example \cite{SSS}, then we show that in that case the range of the $p$'s can indeed be improved to the full range $(0, \infty]$. We should also mention that although optimal local $L^p$ estimates for Fourier integral operators are by now classical (see \cite{SSS}), the optimal global $L^p$ estimates for these operators are rather recent (see the papers by S. Coriasco and M. Ruzhansky \cite{CR1}, \cite{CR2}, and M. Ruzhansky and M. Sugimoto \cite{Ruzhansky-Sugimoto}). Other global boundedness results for classes of Fourier integral operators on scales of relevant functional spaces, namely, the modulation spaces, have been proved in the work of  F. Concetti, G. Garello and J. Toft \cite{CGT2} and in the paper by E. Cordero, F. Nicola and L. Rodino \cite{CNR2}) (see also \cite{CGT1} and \cite{CNR1}, for similar results on $L^2$ and the $\mathcal{F}L^p(\mathbb{R}^d)_{\mathrm{comp}}$ spaces, respectively). Another collection of recent and interesting results regarding global boundedness of Fourier integral operators, that goes beyond \cite{Ruzhansky-Sugimoto} and encompass more general amplitudes and homogeneous of degree one phase functions is that of A. Hassell, P. Portal and J. Rozendaal \cite{HPR}. In \cite{memoirs}  D. Dos Santos-Ferreira and W. Staubach proved local and global $L^p$ estimates for Fourier integral operators with amplitudes that are merely bounded in the spatial variables and those results were extended by S. Rodr\'iguez-L\'opez and W. Staubach \cite{JFA} to operators with amplitudes belonging to $L^p$ in their spatial variables. Some attempts in establishing estimates in Triebel-Lizorkin space were also made in \cite{memoirs}. However those estimates didn't yield the results obtained here, due to the fact that they were based on vector-valued inequalities for Fourier integral operators which were in turn based on the weighted norm inequalities proven in that paper. The weighted inequalities in \cite{memoirs} require a sharp order of decay, which is worse than the optimal expected order of decay for the validity of Triebel-Lizorkin estimates.\\

The paper is organised as follows; in Section \ref{subsection:definitions} we recall some definitions, facts and results from microlocal and harmonic analysis that will be used throughout the paper. In Section \ref{SSS section} we decompose the Fourier integral operators into certain pieces and establish the basic kernel estimates for these pieces. The kernel estimates obtained here are also valid for non-regular amplitudes. These kernel estimates are used in the proof of the regularity in both Besov-Lipschitz and Triebel-Lizorkin spaces in the later sections, for Fourier integral operators with regular amplitudes. In Section \ref{RS globalisation} we describe the transference of local to global regularity of Fourier integral operators due to M. Ruzhansky and M. Sugimoto, and how it can be fit into our setting. In Section \ref{boundedness in Besov} we prove the optimal local and global boundedness of Fourier integral operators on all possible scales of Besov-Lipschitz spaces (Theorem \ref{thm:local and global BL}), however one of the intermediate results (Proposition \ref{prop:JFA}) deals with non-smooth amplitudes. In Section \ref{boundedness on Triebel} we deal with the regularity problem in certain scales of Triebel-Lizorkin spaces and obtain optimal results for those scales (Theorem \ref{thm:local and global TL complement}). However, we also show that if the order of the operator is just below the critical threshold, then the Triebel-Lizorkin regularity can be extended to all possible scales of the Triebel-Lizorkin spaces (Theorem \ref{thm:local and global nonendpoint TL}). In Section \ref{dimension_one} we prove the optimal one dimensional results regarding the regularity of Fourier integral operators for all possible Banach and quasi-Banach scales. In Section \ref{section:sharpness} we give a motivation for why the boundedness results that we have obtained are sharp, and finally in Section \ref{section:applications in PDE} we produce the aforementioned local and global Besov-Lipschitz and Triebel-Lizorkin space estimates for hyperbolic partial differential equations (estimates \eqref{main global besov estimate for the wave equation} and \eqref{main global Triebel estimate for the wave equation} and Theorem \ref{local besov estimate for hyperbolic pde}).\\

 {\bf Acknowledgments:}   The  authors  are  grateful  to  the  referees, whose suggestions have improved the overall presentation of the paper. We are also indebted to Joachim Toft and Patrik Wahlberg for their comments and suggestions which have led to further improvements.

\section{Definitions and Preliminaries}\label{subsection:definitions}
\noindent In this section, we will collect all the definitions that will be used throughout this paper. We also state some useful results from both harmonic and microlocal analysis which will be used in the proofs of our results.\\

As is common practice, we will denote positive constants in the inequalities, which can be determined by known parameters in a given situation but whose
value is not crucial to the problem at hand, by $C$. Such parameters in this paper would be, for example, $m$, $p$, $q$, $s$, $n$,  and the constants connected to the seminorms of various amplitudes or phase functions. The value of $C$ may differ
from line to line, but in each instance could be estimated if necessary. We also write $a\lesssim b$ as shorthand for $a\leq Cb$ and moreover will use the notation $a\approx b$ if $a\lesssim b$ and $b\lesssim a$.\\

\noindent Let us recall the definition of the standard \textit{Littlewood-Paley decomposition} which  is a basic ingredient in our proofs, and is also used to define the function spaces that we are concerned with here.

\begin{defn}\label{def:LP}
Let $\psi_0 \in \mathcal C_c^\infty(\R^n)$ be equal to $1$ on $B(0,1)$ and have its support in $B(0,2)$. Then let
$$\psi_j(\xi) := \psi_0 \left (2^{-j}\xi \right )-\psi_0 \left (2^{-(j-1)}\xi \right ),$$
where $j\geq 1$ is an integer and $\psi(\xi) := \psi_1(\xi)$. Then $\psi_j(\xi) = \psi\left (2^{-(j-1)}\xi \right )$ and one has the following Littlewood-Paley partition of unity
\m{
\sum_{j=0}^\infty \psi_j(\xi) = 1 \quad \text{\emph{for all }}\xi\in\R^n.
}
\\

It is sometimes also useful to define a sequence of smooth and compactly supported functions $\Psi_j$ with $\Psi_j=1$ on the support of $\psi_j$ and $\Psi_j=0$ outside a slightly larger compact set. Explicitly, one could set
\m{
\Psi_j := \psi_{j+1}+\psi_j+\psi_{j-1},
}
with $\psi_{-1}:=\psi_0$.
\end{defn} \hspace*{1cm}\\

Using the Littlewood-Paley decomposition of Definition \ref{def:LP}, one can define the so called \textit{Besov-Lipschitz spaces} which are one of the main function spaces from the point of view of this paper.
\begin{defn}\label{def:Besov}
	Let $0 < p,q \le \infty$ and $s \in {\mathbb R}$. The Besov-Lipschitz spaces are defined by
	\[
	{B}^s_{p,q}(\R^n)
	:=
	\left\{
	f \in {\S}'(\R^n) \,:\,
	\|f\|_{{B}^s_{p,q}(\R^n)}
	:=
	\left(
	\sum_{j=0}^\infty
	2^{jq s}\|\psi_j(D)f\|^{q}_{L^p(\R^n)}
	\right)^{\frac{1}{q}}<\infty
	\right\}.
	\]
	
\noindent It is also worth mentioning that for $p=q=\infty$ and $0<s\leq 1$ we obtain the familiar Lipschitz space $\Lambda^s(\R^n)$, i.e. $B^s_{\infty,\infty}(\R^n)= \Lambda^s(\R^n)$.
\end{defn}

\begin{rem}
Different choices of the basis $(\psi_j)_{j=0}^\infty$ give equivalent \emph{(}quasi\emph{)}-norms of $B_{p,q}^s(\R^n)$ in \emph{Definition \ref{def:Besov},} see e.g. \cite{Trie83}. We will use either $(\psi_j)_{j=0}^\infty$ or  $(\Psi_j)_{j=0}^\infty$ to define the norm of $B_{p,q}^s(\R^n)$.
\end{rem}

We will also produce boundedness results in the realm of \textit{Triebel-Lizorkin} spaces which can be defined using Littlewood-Paley theory, as follows:

\begin{defn}\label{def:Triebel}
Let $0 < p,q \le \infty$ and $s \in {\mathbb R}$. The Triebel-Lizorkin spaces are defined by
	\[
	{F}^s_{p,q}(\R^n)
	:=
	\left\{
	f \in {\S}'(\R^n) \,:\,
	\|f\|_{{F}^s_{p,q}(\R^n)}
	:=
	\norm{\left(
	\sum_{j=0}^\infty
	2^{jq s}|\psi_j(D)f|^{q}
	\right)^{\frac{1}{q}}}_{L^p(\R^n)} <\infty
	\right\}.
	\]

\noindent Note that for $-\infty <s<\infty$ and $1\leq p<\infty,$ $F^s_{p,2}(\R^n)=H^{s,p}(\R^n)$ \emph{(}various $L^p$-based Sobolev and Sobolev-Slobodeckij spaces\emph{)} and for $0<p<\infty$, $F^0_{p,2}(\R^n)=h^p(\R^n)$ \emph{(}the local Hardy spaces\emph{)}. Moreover the dual space of $F^{0}_{1,2}(\R^n)$ is $\mathrm{bmo}$ \emph{(}the local version of $\mathrm{BMO}$\emph{)}.
\end{defn}

Another fact which will be useful to us is that for $-\infty <s<\infty$ and $0<p\leq \infty$
\begin{equation}\label{equality of TL and BL}
B^s_{p,p}(\R^n)= F^s_{p,p}(\R^n),
\end{equation}
and that one has the continuous embedding
\begin{equation}\label{embedding of TL}
F^{s+\varepsilon}_{p,q_0}(\R^n)\xhookrightarrow{} F^s_{p ,q_1}(\R^n),
\end{equation}
for $-\infty <s<\infty$, $0<p< \infty$, $0<q_0,q_1 \leq \infty$ and all $\varepsilon>0$.
Furthermore, for $s'\in \R$, the operator $\displaystyle \brkt{1-\Delta}^{\frac{s'}2}$ maps ${F}^s_{p,q}(\R^n)$ isomorphically into ${F}^{s-s'}_{p,q}(\R^n)$ and ${B}^s_{p,q}(\R^n)$ isomorphically into ${B}^{s-s'}_{p,q}(\R^n).$\\

Since we shall later on specifically deal with Triebel-Lizorkin spaces \linebreak$F_{p,2}^0(\R^n)=h^p (\R^n)$, we also recall that a function $a$ is called a $h^p$-atom if for some $x_0\in \R^n$ and $r>0$ the following three conditions are satisfied:
\begin{enumerate}
\item $\supp a\subset B(x_{0}, r)$,
\item $\displaystyle |a(x)|\leq|B(x_{0}, r)|^{-\frac 1p},$
\item If $r\leq 1$, $\displaystyle M\geq \left[ n\brkt{\frac 1p -1}_+ \right ]$, where $[x]$ denotes the integer part of $x$, then $\displaystyle \int_{\R^n} x^{\alpha}a(x)\dd x=0,$ for $|\alpha|\leq M$. No further condition is assumed if $r>1.$ 
\end{enumerate}

It is well known (see \cite{Trie83}) that a distribution $f\in h^p (\R^n)$ has an atomic decomposition
$$
f=\sum_{j}\lambda_{j}a_{j},
$$
where the $\lambda_{j}$ are constants such that $$\displaystyle \sum_{j}|\lambda_{j}|^{p}\approx\Vert f\Vert_{h^p(\mathbb{R}^{n})}^{p}=\Vert f\Vert_{F_{p,2}^0(\mathbb{R}^{n})}^{p}$$ and the $a_{j}$ are $h^p$-atoms.\\

Another important and useful fact about Besov-Lipschitz and Triebel-Lizorkin spaces is the following:

\begin{thm}\label{invariance thm}
Let $\eta: \R^n \to\R^n$ with $\eta(x)=(\eta_1 (x), \dots,\eta_n (x))$ be a diffeomorphism such that $\abs{\det D\eta (x)}\geq c>0$, $\forall x\in \R^n$ \emph{(}$D\eta$ denotes the Jacobian matrix of $\eta$\emph{)}, and $\Vert\partial^{\alpha}\eta_j (x)\Vert_{L^\infty(\R^n)}\lesssim 1$ for all $j\in \{1,\dots, n\}$ and $|\alpha|\geq 1.$ Then
for $s\in \R$, $0<p<\infty$ and $0<q\leq \infty$ one has $$\Vert f\circ \eta\Vert_{F^{s}_{p,q}(\R^n)}\lesssim \Vert f\Vert_{F^{s}_{p,q}(\R^n)}.$$

The same invariance estimate is also true for Besov-Lipschitz spaces $B^{s}_{p,q}(\R^n)$ for  $s\in \R$, $0<p\leq\infty$ and $0<q\leq \infty$.
\end{thm}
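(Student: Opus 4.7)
The plan is to use the Peetre maximal function (equivalently, local-means) characterization of $B^s_{p,q}(\R^n)$ and $F^s_{p,q}(\R^n)$, combined with a pointwise bound that transfers the Littlewood-Paley analysis from $x$ to $\eta(x)$, and then to change variables in the resulting $L^p$-norm. Throughout, the key analytic input is that the hypotheses on $\eta$ imply $|\det D\eta(x)|\approx 1$ and $\|\partial^{\alpha}\eta_j\|_{L^{\infty}}\lesssim 1$, and hence the same bounds hold for $\eta^{-1}$.

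First I would invoke the equivalent quasi-norm
\[
\|g\|_{F^s_{p,q}(\R^n)} \approx \bigg\| \Big( \sum_{j \geq 0} 2^{jsq} (\psi_j^* g)^q \Big)^{1/q} \bigg\|_{L^p(\R^n)},
\]
where $\psi_j^* g(x) := \sup_{y \in \R^n} |\psi_j(D) g(y)|/(1 + 2^j|x-y|)^a$, valid for any $a > n/\min(p,q)$; the analogous identity holds for $B^s_{p,q}(\R^n)$ with $a>n/p$ and the $\ell^q$-sum taken outside the $L^p$-norm. This reduces the problem to a pointwise comparison between Littlewood-Paley pieces of $f \circ \eta$ and Peetre maximal functions of $f$.

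Next, the technical heart of the proof: showing that there exists $C_0$, depending only on the bounds in the hypotheses on $\eta$, such that for every $j\geq 0$ and every $x\in\R^n$,
\[
|\psi_j(D)(f \circ \eta)(x)| \lesssim \sum_{|k-j| \leq C_0} \psi_k^* f(\eta(x)).
\]
To prove this I would pass to a local-means characterization with a compactly supported kernel $k$ carrying enough vanishing moments, so that $\psi_j(D)f$ may be replaced by $k_j * f$ with $k_j(x) = 2^{jn} k(2^j x)$. Then
\[
(k_j * (f \circ \eta))(x) = \int_{\R^n} k_j(y) f(\eta(x-y))\, dy,
\]
so the integrand is supported in $|y|\lesssim 2^{-j}$. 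A Taylor expansion $\eta(x-y) = \eta(x) - D\eta(x)y + O(|y|^2)$, followed by the change of variables $z = \eta(x-y)$ and using the uniform bounds on $D\eta^{-1}$ and $|\det D\eta^{-1}|$, rewrites this as the integral of $f$ against a kernel localized in a ball of radius $\lesssim 2^{-j}$ about $\eta(x)$ and with the right cancellation, which is then bounded by a finite sum of nearby-scale Peetre maximal functions of $f$ at the point $\eta(x)$.

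Finally, with the pointwise bound in hand, the $L^p$-norm transfers via
\[
\int_{\R^n} \big( \psi_k^* f(\eta(x)) \big)^p\, dx = \int_{\R^n} \big(\psi_k^* f(z)\big)^p\, |\det D\eta^{-1}(z)|\, dz \lesssim \|\psi_k^* f\|_{L^p(\R^n)}^p.
\]
Together with the $\ell^q(2^{ksq})$-summation (which absorbs the harmless finite shift $|k-j|\leq C_0$ into the weight), this gives $\|f\circ\eta\|_{F^s_{p,q}} \lesssim \|f\|_{F^s_{p,q}}$. The Besov case follows in the same way by exchanging the roles of $\ell^q$ and $L^p$ and using the Besov form of the Peetre characterization. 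The principal obstacle is the pointwise estimate: since the convolution kernel of $\psi_j(D)$ is only rapidly decreasing, one must either switch to a compactly supported local-means kernel as outlined, or handle the rapid decay carefully against the nonlinear change of variables; in either case, the subtlety is to combine the derivative bounds on $\eta$ with the lower bound on $|\det D\eta|$ to ensure that frequency scales on the $x$- and $\eta(x)$-sides differ by at most a bounded factor uniformly in $j$.
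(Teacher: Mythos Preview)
The paper does not prove this theorem; immediately after the statement it writes ``For a proof see J.~Johnsen, S.~Munch Hansen and W.~Sickel \cite{JohnsenMunchSickel}*{Corollary 25}, and H.~Triebel \cite{Triebel 2}*{Theorem 4.3.2}.'' So there is nothing to compare against except the cited literature, and your sketch is an attempt to reconstruct an argument along the lines of those references.

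Your overall strategy---Peetre maximal functions / local means, a pointwise transfer estimate, then change of variables in $L^p$---is indeed the standard route and is essentially what the cited references do. The step that is genuinely delicate, and which your sketch does not resolve, is the assertion that after the change of variables $z=\eta(x-y)$ the resulting kernel has ``the right cancellation.'' Writing the integral as
\[
\int f(z)\,k_j\big(x-\eta^{-1}(z)\big)\,|\det D\eta^{-1}(z)|\,dz,
\]
the new kernel in $z$ is localized at scale $\approx 2^{-j}$ near $\eta(x)$, but there is no reason it should inherit vanishing moments from $k$: the nonlinear distortion $x-\eta^{-1}(z)$ and the Jacobian factor destroy the exact cancellation. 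Your Taylor expansion $\eta(x-y)=\eta(x)-D\eta(x)y+O(|y|^2)$ is the right starting point, but the $O(|y|^2)$ remainder is of size $2^{-2j}$, which is \emph{smaller} than the scale $2^{-j}$, and one must expand to sufficiently high order and control each term separately against the Peetre maximal function; the leading term (with the linear map $D\eta(x)$) does preserve moments after a further linear change of variables, and the higher-order corrections are handled by trading the extra smallness $2^{-j}$ for a derivative on $f$, which is again absorbed by the maximal function. This is exactly the content of the arguments in \cite{Triebel 2}*{Section 4.3} and \cite{JohnsenMunchSickel}, and it is substantially more work than your one-line summary suggests. Your final paragraph correctly flags this as ``the principal obstacle,'' but the sketch as written does not close the gap.
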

For a proof see J. Johnsen, S. Munch Hansen and W. Sickel \cite{JohnsenMunchSickel}*{Corollary 25}, and  H. Triebel \cite{Triebel 2}*{Theorem 4.3.2}. References \cite{Trie83} and \cite{Triebel 2} and \cite{Triebel 3} are actually the standard references for all the facts concerning Besov-Lipschitz and Triebel-Lizorkin spaces. See also \cite{Triebelpseudo} for a summary of most important properties of the Triebel-Lizorkin spaces. \\

\noindent Next we recall the definition of two classes of amplitudes which are the basic building blocks of the pseudodifferential and the Fourier integral operators used in this paper.
The first class was first introduced by J.J. Kohn and L. Nirenberg in \cite{KN}.
\begin{defn}\label{symbol class Sm}
An \textit{amplitude} \emph{(}symbol\emph{)} $a(x,\xi)$ in the class $S^m(\R^n)$ is a function \\$a\in \mathcal{C}^\infty (\R ^n\times \R ^n)$ that verifies the estimate
\m{
\left |\partial_\xi^\alpha \partial_x^\beta a(x,\xi) \right |\lesssim \la\xi\ra ^{m-|\alpha|},
}
for all multi-indices $\alpha$ and $\beta$ and $(x,\xi)\in \R ^n\times \R ^n$, where $\la\xi\ra:= \brkt{1+|\xi|^2}^{\frac{1}{2}}.$
We shall henceforth refer to $m$ as the order of the amplitude.
We shall also use the class of amplitudes $A^m(\R^n)$ which consists of all $a(x,y,\xi)$ that verify the estimate
\m{
| \partial_\xi^\alpha \partial_x ^\beta  \partial_y ^\gamma  a(x, y,\xi) |\lesssim \la\xi\ra ^{m-|\alpha|},
}
for all multi-indices $\alpha$, $\beta$, $\gamma$ and $(x, y,\xi)\in \R ^n \times \R ^n \times \R ^n$.
\end{defn}
\noindent There is another class of amplitudes used in Proposition \ref{prop:JFA} below, that are those which are merely bounded in the $x$-variable and
were first introduced by  C. Kenig and W. Staubach in \cite{KS}.
\begin{defn}\label{symbolclass LinfSm}
\noindent An \textit{amplitude} \emph{(}symbol\emph{)} $a(x,\xi)$ is in the class $L^\infty S^m(\R^n)$  if it is essentially bounded in the $x$ variable, $\mathcal{C}^\infty ( \R ^n)$ in the $\xi$ variable and verifies the estimate
\m{
\norm{ \partial_\xi^\alpha a(\cdot,\xi) }_{L^\infty(\R^n)}\lesssim \la\xi\ra ^{m-|\alpha|},
}
for all multi-indices $\alpha$ and $\xi\in \R ^n$.
\end{defn}
We note that $S^m (\R^n)\subset L^\infty S^m(\R^n).$\\

\noindent For the purpose of proving boundedness results for Fourier integral operators, it turns out that the following order of the amplitude is the critical one, namely
\nm{eq:criticaldecay}{m_c(p) :=  -(n-1)\abs {\frac 1p -\frac 12},}
where $0<p\leq\infty$.
This means that, we will be able to establish various boundedness results for the Fourier integral operators when the order of the amplitude is less than or equal to $m_c(p)$.\\

\noindent Given the symbol classes defined above, one associates to the symbol its \textit{Kohn-Nirenberg quantisation }as follows:
\begin{defn}
Let $a$ be a symbol. Define a pseudodifferential operator \emph{(}$\Psi\mathrm{DO}$ for short\emph{)} as the operator
\begin{equation*}
a(x,D)f(x) := \int_{\R^n}e^{ix\cdot\xi}a(x,\xi)\widehat{f}(\xi) \ddd \xi,
\end{equation*}
a priori defined on the Schwartz class $\mathscr{S}(\R^n).$ Here and in what follows, \linebreak$\displaystyle \ddd \xi:= (2\pi)^{-n}\dd \xi.$
\end{defn}\hspace*{1cm}\\
\noindent In order the define the Fourier integral operators that are studied in this paper, we also define the classes of phase functions, which together with the amplitudes of Definitions \ref{symbol class Sm} and \ref{symbolclass LinfSm} are the main building blocks of Fourier integral operators.

\begin{defn}\label{def:phi2}
A \textit{phase function} $\varphi(x,\xi)$ in the class $\Phi^k$ is a function \linebreak$\varphi(x,\xi)\in \mathcal{C}^{\infty}(\R^n \times\R^n \setminus\{0\})$, positively homogeneous of degree $1$ in the frequency variable $\xi$ satisfying the following estimate

\begin{equation}\label{C_alpha}
	\sup_{(x,\,\xi) \in \R^n \times\R^n \setminus\{0\}}  |\xi| ^{-1+\vert \alpha\vert}\left | \partial_{\xi}^{\alpha}\partial_{x}^{\beta}\varphi(x,\xi)\right |
	\leq C_{\alpha , \beta},
	\end{equation}
	for any pair of multi-indices $\alpha$ and $\beta$, satisfying $|\alpha|+|\beta|\geq k.$
    In this paper we will mainly use  phases in class $\Phi^2$ and occasionally  also $\Phi^1$.
\end{defn}\hspace*{1cm}

\noindent We will also need to consider phase functions that satisfy  certain {\em non-degeneracy conditions}. These conditions have to be adapted to the case of local and global boundedness in an appropriate way. Following \cite{SSS}, in connection to the investigation of the local results, that is, under the assumption that the $x$ support of the amplitude $a(x,\xi)$ lies within a fixed compact set $\mathcal{K}$, the non-degeneracy condition is formulated as follows:
\begin{defn}
Let $\mathcal{K}$ be a fixed compact subset of $\R^n$. One says that the phase function $\varphi(x,\xi) $  satisfies the non-degeneracy condition if
\begin{equation}\label{ND}
	\det \brkt{\partial^{2}_{x_{j}\xi_{k}}\varphi(x,\xi)}\neq 0,\qquad \mbox{for all $(x,\xi)\in \mathcal{K}\times \R^n\setminus\{0\}$}.
\end{equation}
\end{defn}\hspace*{1cm}\\
\noindent Following the approach in e.g. \cite{JFA}, for the global $L^p$ boundedness  results that were established in that paper, we also define the following somewhat stronger notion of non-degeneracy:
\begin{defn}
One says that the phase function $\varphi(x,\xi)$ satisfies the strong non-degeneracy condition \emph{(}or $\varphi$ is $\mathrm{SND}$ for short\emph{)} if
\begin{equation}\label{SND}
	\left |\det \brkt{\partial^{2}_{x_{j}\xi_{k}}\varphi(x,\xi)} \right |\geq \delta,\qquad \mbox{for  some $\delta>0$ and all $(x,\xi)\in \mathbb{R}^{n} \times \R^n\setminus\{0\}$}.
\end{equation}
\end{defn}\hspace*{1cm}\\

We define a \lq\lq influence set" of the SND phase function $\varphi$.
\begin{defn}\label{def:influenceset}
Let $\bar y\in\R^n$ be the centre of a ball $B$ with radius $r$. We define the \lq\lq rectangles" $R_{j}^{\nu}$ by
$$
R_{j}^{\nu}=\set{x\in\ \R^n\ :\ |\nabla_{\xi}\varPhi(x,\overline{y},\xi_{j}^{\nu})|\leq A 2^{-\frac j2},\ |\pi_{j}^{\nu}(\nabla_{\xi}\varPhi(x,\overline{y},\xi_{j}^{\nu}))|\leq A 2^{-j}}.
$$
where $\pi_j^\nu$ is the orthogonal projection in the direction $\xi_{j}^{\nu}$ and $\varPhi$ is of either the form  $\varPhi(x,y,\xi)= \varphi(x,\xi) -y\cdot\xi$ or $\varPhi(x,y,\xi)=x\cdot\xi -\varphi(y,\xi)$. The size of the constant $A$ depends on the size of various Hessians of $\Phi$ but not on $j$.
One then defines the \lq\lq influence set"
\begin{equation}\label{defn:Bstar}
B^{*}=\bigcup_{2^{-j}\leq r}\bigcup_{\nu}R_{j}^{\nu}.
\end{equation}

\end{defn}
\begin{rem}\label{rem:influenceset}
Given $B^*$ in \emph{Definition \ref{def:influenceset}} above, one can show the following:\\
\begin{enumerate}
\item[\emph{(i)}]$|B^{*}|\lesssim r$ \emph{(}see e.g. \cite{Stein}\emph{)}. \\
\item[\emph{(ii)}] Suppose that $k$ is an integer such that $2^{-k}\leq r < 2^{-k+1}$, that $x\in B^{*c}$ and that $y\in B$. Then there is a unit vector $\xi_k^\mu$ such that $\abs{\xi_j^\nu-\xi_k^\mu}\leq 2^{-\frac k2}$ and, by using homogeneity and the triangle inequality, there exists a constant $C>0$ such that
\begin{equation*}\label{raeddaren}
\big|2^{j}\nabla_{\xi_{1}}\varPhi (x,y,\xi_{j}^{\nu}))\big|+\big|2^{\frac j2}\nabla_{\xi'}\varPhi (x,y,\xi_{j}^{\nu})\big|\geq C2^{\frac{j-k}2}.
\end{equation*}
\end{enumerate}
\end{rem}\hspace*{1cm}\\

\noindent Having the definitions of the amplitudes and the phase functions at hand, one has
\begin{defn}\label{def:FIO}
	A Fourier integral operator \emph($\mathrm{FIO}$ for short\emph) $T_a^\varphi$ with amplitude $a$ and phase function $\varphi$ satisfying \eqref{ND}, is defined \emph{(}once again a-priori on $\mathscr{S}(\R^n)$\emph{)} by formula \eqref{definition of FIO} in the introduction.
\end{defn}\hspace*{1cm}\\
\noindent The following composition result, whose proof can be found in \cite{MONSTERIOSITY}*{Theorem 4.2} will enable us to keep track of the parameter while a parameter-dependent $\Psi$DO is composed with a parameter-dependent FIO. This will be crucial in some of the forthcoming proofs.
\begin{thm}\label{prop:monsteriosity}
Let $m \leq 0$, $\displaystyle 0<\varepsilon <\frac 12$ and $\Omega := \R^n \times \{|\xi| > 1\}$. Suppose that $ a_t(x, \xi)\in S^m (\R^n)$ uniformly in $t \in (0, 1]$ and it is supported in $\Omega$, $\rho(\xi)\in S^0(\R^n)$ and $\varphi \in \mathcal C^\infty (\Omega)$ is such that
\begin{enumerate}
\item[\emph{(i)}]for constants $C_1, C_2 > 0$, $C_1|\xi| \leq |\nabla_x \varphi(x, \xi)| \leq C_2|\xi|$ for all $(x, \xi) \in \Omega$, and
\item[\emph{(ii)}]for all $|\alpha|, |\beta| \geq 1$, $|\partial_x^\alpha \varphi(x, \xi)|\lesssim  \la \xi \ra$ and $|\partial_\xi^\alpha \partial _x^\beta \varphi (x, \xi)| \lesssim  |\xi|^{1-|\alpha|}$, for all $(x, \xi) \in \Omega$.
\end{enumerate}

\noindent Consider the parameter dependent Fourier integral operator $T_{a_t}^\varphi$, given by \eqref{definition of FIO} with amplitude $a_t(x,\xi)$, and the parameter dependent Fourier multiplier  
\begin{equation*}
\rho(tD)f(x) := \int_{\R^n} e^{ix\cdot \xi}\,\rho(t\xi)\,\widehat f(\xi) \ddd \xi
\end{equation*}
and let $\sigma_t$ be the amplitude of the composition operator $\rho (tD)T_{a_t}^\varphi = T_{\sigma_t}^\varphi$ given by
\begin{equation*}
\sigma_t(x, \xi) := \iint_{\R^{n}\times \R^n} a_t(y, \xi)\,\rho (t\eta)\,e^{i(x-y)\cdot \eta+i\varphi(y,\xi)-i\varphi(x,\xi)} \ddd\eta \dd y.
\end{equation*}
Then, for each $M \geq 1$, we can write $\sigma_t$ as
\begin{equation*}
\sigma_t(x, \xi) = \rho (t \nabla_x \varphi(x, \xi))\,a_t(x, \xi) + \sum_{0<|\alpha|<M} \frac{t^{|\alpha|}}{\alpha!}
 \sigma_\alpha (t, x, \xi) + t^{M\varepsilon} r(t, x, \xi)
\end{equation*}
for $t \in (0, 1)$. Moreover, for all multi-indices $\beta, \gamma$ one has
\begin{equation*}
\sup_{t\in(0,1)} \abs{\partial ^\gamma_\xi \partial_x^\beta \sigma_\alpha(t, x, \xi)   t^{|\alpha|(1-\varepsilon)}} \lesssim \la\xi\ra^{m-|\alpha|\left ( \frac 12- \varepsilon \right )-|\gamma|}\text{ for } 0 < |\alpha | < M,
\end{equation*}
and
\begin{equation*}
\sup_{t\in(0,1)} \abs{\partial _\xi^\gamma \partial_x^\beta r(t, x, \xi)} \lesssim   \la\xi\ra ^{m-M\left ( \frac 12 -\varepsilon \right )-|\gamma|}.
\end{equation*}
\end{thm}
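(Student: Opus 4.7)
The plan is to reduce $\sigma_t$ to an oscillatory integral with the standard nondegenerate phase $-z\cdot\eta$ and then read off the asymptotic expansion by Taylor--expanding the amplitude in $z$. Using the fundamental theorem of calculus,
\[
\varphi(y,\xi)-\varphi(x,\xi)=(y-x)\cdot\Phi(x,y,\xi),\qquad \Phi(x,y,\xi):=\int_0^1\nabla_x\varphi(x+s(y-x),\xi)\,\dd s,
\]
the (Jacobian-one) substitution $\eta\mapsto\eta+\Phi(x,y,\xi)$ followed by $z:=y-x$ brings the formula for $\sigma_t$ into the form
\[
\sigma_t(x,\xi)=\iint a_t(x+z,\xi)\,\rho\bigl(t(\eta+\Phi(x,x+z,\xi))\bigr)\,e^{-iz\cdot\eta}\,\dd z\,\ddd\eta.
\]
Under assumptions~(i)--(ii), $\Phi(x,x,\xi)=\nabla_x\varphi(x,\xi)\sim\xi$, all $z$-derivatives of $\Phi$ are $O(\la\xi\ra)$, and $\xi$-derivatives of $\Phi$ gain the usual symbolic decay.

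Next I would Taylor--expand $F(z,\eta):=a_t(x+z,\xi)\,\rho(t(\eta+\Phi(x,x+z,\xi)))$ in $z$ at $z=0$ up to order $M$. Each polynomial contribution $\frac{z^\alpha}{\alpha!}(\partial_z^\alpha F)(0,\eta)$ is integrated against $e^{-iz\cdot\eta}$ by means of the identity
\[
\iint z^\alpha G(\eta)\,e^{-iz\cdot\eta}\,\dd z\,\ddd\eta=(-i)^{|\alpha|}\,\partial_\eta^\alpha G(0),
\]
which follows by integration by parts in $\eta$ together with $\int e^{-iz\cdot\eta}\,\dd z=(2\pi)^n\delta(\eta)$. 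This produces $\frac{(-i)^{|\alpha|}}{\alpha!}\partial_\eta^\alpha\partial_z^\alpha F(0,0)$, which for $\alpha=0$ is precisely the main term $a_t(x,\xi)\rho(t\nabla_x\varphi(x,\xi))$, and for $0<|\alpha|<M$ defines the symbols $\sigma_\alpha$ once the natural factor $t^{|\alpha|}$ has been pulled out. To verify the claimed size of $\sigma_\alpha$, I would expand $\partial_\eta^\alpha\partial_z^\alpha F(0,0)$ by Leibniz and Fa\`a di Bruno, using that (a) every $\partial_\eta$ falling on $\rho$ extracts a factor $t$, (b) every $\partial_z$ on $\Phi$ contributes $O(\la\xi\ra)$ by~(ii), and (c) the $S^0$-membership of $\rho$ yields $|\partial^k\rho(u)|\lesssim\la u\ra^{-k}$, which at $u=t\nabla_x\varphi\sim t\xi$ becomes $\la t\xi\ra^{-k}$. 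A case split on $t|\xi|\le 1$ vs.\ $t|\xi|\ge 1$ then converts the resulting bound $t^{|\alpha|}\la t\xi\ra^{-|\alpha|}\la\xi\ra^{m}$ into the required $t^{-|\alpha|(1-\varepsilon)}\la\xi\ra^{m-|\alpha|(1/2-\varepsilon)-|\gamma|}$, with $\varepsilon\in(0,1/2)$ playing the role of the interpolation exponent between the two regimes. Derivative bounds in $x$ and $\xi$ then follow by differentiating under the integral sign and repeating the same bookkeeping.

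The main obstacle is the remainder $r$. Writing Taylor's remainder in integral form,
\[
t^{M\varepsilon}r(t,x,\xi)=M\sum_{|\alpha|=M}\frac{1}{\alpha!}\int_0^1(1-\tau)^{M-1}\iint z^\alpha (\partial_z^\alpha F)(\tau z,\eta)\,e^{-iz\cdot\eta}\,\dd z\,\ddd\eta\,\dd\tau,
\]
I would convert $z^\alpha$ into $i^{|\alpha|}\partial_\eta^\alpha$ via integration by parts against the exponential, and then apply the regularising operators $\la\eta\ra^{-2N}(1-\Delta_z)^N$ and $\la z\ra^{-2N}(1-\Delta_\eta)^N$ as many times as necessary to render the double integral absolutely convergent. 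The delicate point is to balance these two integrations by parts so that exactly $t^{M\varepsilon}$ is exposed while the symbol order stays at $\la\xi\ra^{m-M(1/2-\varepsilon)-|\gamma|}$; as for the $\sigma_\alpha$, this boils down to the same $t|\xi|\lessgtr 1$ case analysis with $M$ in place of $|\alpha|$, now applied uniformly in $\tau\in(0,1)$.
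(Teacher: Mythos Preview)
The paper does not actually prove this theorem; it simply quotes it and refers to \cite{MONSTERIOSITY}*{Theorem 4.2} for the proof, so there is no in-paper argument to compare against. That said, your outline is exactly the standard route to such parameter-dependent composition formulas: linearise the phase via $\varphi(y,\xi)-\varphi(x,\xi)=(y-x)\cdot\Phi(x,y,\xi)$, shift $\eta$ by $\Phi$, Taylor expand the new amplitude in $z=y-x$, and read off the terms using $\iint z^\alpha G(\eta)e^{-iz\cdot\eta}\,\dd z\,\ddd\eta=(-i)^{|\alpha|}(\partial_\eta^\alpha G)(0)$. Your bookkeeping for $\sigma_\alpha$ is correct: after pulling out $t^{|\alpha|}$ from the $\eta$-derivatives of $\rho$, the Fa\`a di Bruno terms coming from $z$-derivatives of $\Phi$ contribute at worst $(t\la\xi\ra)^{|\mu|}\la t\xi\ra^{-|\mu|}\lesssim 1$, leaving $|\sigma_\alpha|\lesssim\la\xi\ra^{m}\la t\xi\ra^{-|\alpha|}$, and your two-regime split on $t|\xi|\lessgtr 1$ indeed yields $t^{|\alpha|(1-\varepsilon)}|\sigma_\alpha|\lesssim\la\xi\ra^{m-|\alpha|(1/2-\varepsilon)}$ in both cases.

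The one place you should be careful is the remainder. After moving $z^\alpha$ onto $\partial_\eta^\alpha$ you are left with an oscillatory double integral whose amplitude $(\partial_\eta^\alpha\partial_z^\alpha F)(\tau z,\eta)$ has no decay in either $z$ or $\eta$ (since $a_t$ is merely bounded in $x$ and $\rho\in S^0$ is merely bounded). Your plan of iterating $\la\eta\ra^{-2N}(1-\Delta_z)^N$ and $\la z\ra^{-2N}(1-\Delta_\eta)^N$ is the right idea, but each $\partial_z$ landing on $\rho(t(\eta+\Phi(x,x+\tau z,\xi)))$ brings a factor $t\tau\la\xi\ra$ together with one extra $\la t(\eta+\Phi)\ra^{-1}$, and each $\partial_\eta$ brings $t$ together with $\la t(\eta+\Phi)\ra^{-1}$. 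To close the estimate uniformly in $\tau$ and recover exactly $t^{M\varepsilon}\la\xi\ra^{m-M(1/2-\varepsilon)-|\gamma|}$ you must track that the net effect of these gains and losses is controlled by $\la t\xi\ra^{-M}$ on the critical region $|\eta|\ll|\Phi|\sim|\xi|$, and that away from this region the $\la t(\eta+\Phi)\ra^{-1}$ factors actually produce $\eta$-decay. The cleanest way is to split the $\eta$-integral into $|\eta|\le c|\xi|$ and $|\eta|>c|\xi|$: on the first piece $\la t(\eta+\Phi)\ra\sim\la t\xi\ra$ and the computation reduces to the same two-regime argument you used for $\sigma_\alpha$; on the second piece $\la t(\eta+\Phi)\ra\gtrsim\la t\eta\ra$ provides enough $\eta$-integrability directly. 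With that refinement your sketch goes through and matches the argument one finds in the cited reference.
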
\hspace*{1cm}\\

To deal with the low frequency portion of the kernels of FIOs, which are frequency supported in a neighbourhood of the origin (where the phase function is singular), the following lemma which was proven in \cite{memoirs}*{Lemma 1.17}, will come in handy.
\begin{lem}\label{lem:David-Wulf}
Let $b(x, \xi)$ be a bounded function which is compactly supported in the $\xi$ variable and also belongs to $\mathcal C^{n+1}(\R^n  \setminus \{0\})$ in $\xi$. Moreover assume that $b(x,\xi)$ satisfies
\m{
\sup_{\xi\in \R^n \setminus \{0\}}\abs{\xi}^{-1+|\alpha|}\norm{\partial^\alpha_\xi b(\cdot,\xi)}_{L^\infty(\R^n)} <\infty,
}
for $|\alpha|=n+1.$ Then for all $\mu\in [0,1)$
\m{
\sup_{x,y\in \R^n} \la y\ra^{n+\mu} \abs{\int _{\R^n} e^{-iy\cdot \xi}\ b(x,\xi)\ddd \xi}  <\infty.
}
\end{lem}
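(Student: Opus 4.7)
The plan is to handle $|y|\leq 2$ trivially and, for $|y|\geq 2$, to combine the extraction of the value of $b$ at $\xi=0$ with a dyadic decomposition of the Fourier integral around the singular frequency $\xi=0$. For $|y|\leq 2$, since $b$ is bounded with compact $\xi$-support, one has $\la y\ra^{n+\mu}\abs{\int e^{-iy\cdot\xi}b(x,\xi)\ddd\xi}\lesssim \|b\|_{L^\infty}$ uniformly, so there is nothing to prove; from here on the focus is $|y|\geq 2$.

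The first key step is to extract the value $b(x,0)$. By iterated radial integration of the hypothesis along line segments avoiding the origin, the assumed estimate on $|\alpha|=n+1$ derivatives upgrades to the pointwise bounds $|\partial_\xi^\beta b(x,\xi)|\lesssim|\xi|^{1-|\beta|}$ for $2\leq|\beta|\leq n+1$ and $|\nabla_\xi b(x,\xi)|\lesssim\log(1/|\xi|)$ near $\xi=0$, uniformly in $x$. Consequently $\nabla_\xi b$ is locally integrable near the origin, the limit $b(x,0):=\lim_{\xi\to 0}b(x,\xi)$ exists uniformly in $x$, and $|b(x,\xi)-b(x,0)|\lesssim|\xi|\log(1/|\xi|)$ near $\xi=0$. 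Writing $b(x,\xi)=b(x,0)\chi(\xi)+\tilde b(x,\xi)$ for a fixed $\chi\in\mathcal C_c^\infty(\R^n)$ with $\chi(0)=1$, the contribution of the first summand to the Fourier integral is a uniformly bounded multiple of $\widehat{\chi}(y)$, which is Schwartz in $y$ and thus decays faster than any $\la y\ra^{-(n+\mu)}$.

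It remains to estimate $\int e^{-iy\cdot\xi}\tilde b(x,\xi)\ddd\xi$, exploiting that $\tilde b(x,\cdot)$ vanishes at $\xi=0$ with modulus $|\xi|\log(1/|\xi|)$. I would apply a Littlewood--Paley dyadic decomposition $\tilde b=\sum_{k\leq K_0}\tilde b_k$ with $\tilde b_k(x,\xi):=\tilde b(x,\xi)\phi(2^{-k}\xi)$, where $\phi\in\mathcal C_c^\infty(\R^n)$ is supported on $\{1/2\leq|\eta|\leq 2\}$ and $\sum_{k\in\Z}\phi(2^{-k}\eta)\equiv 1$ for $\eta\neq 0$; only finitely many $k\leq K_0$ contribute by the compact $\xi$-support of $b$. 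Rescaling each piece via $\xi=2^k\eta$ and setting $\tilde B_k(x,\eta):=\tilde b(x,2^k\eta)\phi(\eta)$, the chain rule combined with the refined derivative estimates yields uniform bounds $\|\partial_\eta^\alpha \tilde B_k(x,\cdot)\|_{L^\infty}\lesssim 2^k(1+|k|)$ for $|\alpha|\leq n+1$ and $k\leq 0$ --- the crucial gain of $2^k$ arising precisely from $\tilde b(x,0)=0$. Standard $(n+1)$-fold integration by parts in $\eta$ then gives
\[
\abs{\int e^{-iy\cdot\xi}\tilde b_k(x,\xi)\ddd\xi} \lesssim 2^{k(n+1)}(1+|k|)\,\min\bigl(1,(2^k|y|)^{-(n+1)}\bigr).
\]
Summing over $k\leq K_0$, with the natural crossover at $2^k\sim 1/|y|$, yields $\abs{\int e^{-iy\cdot\xi}\tilde b(x,\xi)\ddd\xi}\lesssim |y|^{-(n+1)}(\log|y|)^2$, which for any $\mu\in[0,1)$ is absorbed into $\la y\ra^{-(n+\mu)}$.

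The principal technical obstacle is the first step: the iterated integration argument that upgrades the hypothesis on $|\alpha|=n+1$ derivatives to the log-Lipschitz behaviour of $b$ near the origin and to the existence of $b(x,0)$ as a uniform limit, since it is this refinement that allows the factor $2^k$ to appear in the rescaled amplitude bound and hence produces a full $|y|^{-(n+1)}$ (up to logarithms) rather than merely $|y|^{-n}$. Once this is in hand, the remaining dyadic decomposition, integration by parts, and summation are routine.
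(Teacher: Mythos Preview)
The paper does not give its own proof of this lemma; it simply refers to \cite{memoirs}*{Lemma 1.17}. Your argument is sound and follows a natural route: extract the value $b(x,0)$ to gain vanishing at the singular frequency, perform a dyadic decomposition in $\xi$ near the origin, and integrate by parts $n+1$ times on each shell. The resulting bound $|y|^{-(n+1)}(\log|y|)^2$ indeed controls $\la y\ra^{-(n+\mu)}$ for every $\mu<1$.

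One point deserves explicit clarification. Your ``iterated radial integration'' upgrading the hypothesis on $|\alpha|=n+1$ derivatives to bounds on lower-order derivatives requires a uniform-in-$x$ anchor. The correct anchor comes from the compact $\xi$-support of $b$: integrating radially inward from a point $R\hat\xi$ outside the support (where all $\xi$-derivatives of $b$ vanish identically) gives $\partial^\beta_\xi b(x,\xi)=-\int_{|\xi|}^{R}\partial_r\partial^\beta_\xi b(x,r\hat\xi)\,dr$ with zero boundary term, and from this the claimed estimates $|\partial^\beta_\xi b|\lesssim|\xi|^{1-|\beta|}$ for $2\le|\beta|\le n+1$ and $|\nabla_\xi b|\lesssim\log(1/|\xi|)$ follow uniformly in $x$. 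Your phrasing ``along line segments avoiding the origin'' is compatible with this, but you should say it explicitly: had you instead integrated from a reference point on $\{|\xi|=1\}$, you would need a uniform-in-$x$ bound on intermediate derivatives there, and that is \emph{not} among the stated hypotheses.

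With that anchor in place, the existence of $b(x,0)$ as a uniform limit, the estimate $|\tilde b(x,\xi)|\lesssim|\xi|\log(1/|\xi|)$, the rescaled bounds $\|\partial^\alpha_\eta\tilde B_k\|_{L^\infty}\lesssim 2^k(1+|k|)$, and the dyadic summation all go through as you describe.
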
\hspace*{1cm}\\

The following phase reduction lemma, whose proof can be found in \cite{memoirs}*{Lemma 1.10}, will reduce the phase of the Fourier integral operators to a linear term plus a phase for which the first order frequency derivatives are bounded.
\begin{lem}\label{phase reduction lem}
Any Fourier integral operator $T_\sigma ^\varphi $ of the type \eqref{definition of FIO} with amplitude $\sigma(x,\xi)\in S^{m}(\R^n)$ and phase function $\varphi(x,\xi)\in\Phi^2$, can be written as a finite sum of operators of the form
\m{
   \int_{\R^n} a(x,\xi)\, e^{i\theta(x,\xi)+i\nabla_{\xi}\varphi(x,\zeta)\cdot\xi}\, \widehat{u}(\xi)  \ddd\xi
}
where $\zeta$ is a point on the unit sphere $\mathbb{S}^{n-1}$, $\theta(x,\xi)\in \Phi^{1},$ and $a(x,\xi)$ is localised in the $\xi$ variable around the point $\zeta$.
Moreover, if one has a Fourier integral operator of the form $$\displaystyle \iint_{\R^n\times \R^n} a(y,\xi)\, e^{i\varphi(y,\xi)-ix\cdot\xi}\, u(y) \ddd \xi \dd y,$$with $\varphi\in \Phi^2$ then this operator can be written as a finite sum of operators
\m{
   \iint_{\R^n\times \R^n} a(y,\xi)\, e^{i\theta(y,\xi)+i\nabla_{\xi}\varphi(y,\zeta)\cdot\xi-ix\cdot\xi}\, u(y)  \ddd\xi \dd y,
}
where $\theta(y,\xi)\in \Phi^{1},$  and $a(y,\xi)$ is localised in the $\xi$ variable around the point $\zeta$.
\end{lem}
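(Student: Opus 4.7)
The plan is to prove the lemma by a conical partition of unity in the frequency variable, combined with a Taylor expansion of $\varphi$ around a finite collection of reference directions on the unit sphere, reducing the original phase to a linear term in $\xi$ plus a residual that remains in $\Phi^{1}$.

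First I would pick finitely many points $\zeta_{1},\dots,\zeta_{N}\in\mathbb{S}^{n-1}$ and a smooth partition of unity $\{\chi_{\nu}\}$ on $\mathbb{S}^{n-1}$ subordinate to sufficiently small caps around the $\zeta_{\nu}$, extended to $\R^{n}\setminus\{0\}$ as homogeneous functions of degree zero. After splitting off a low-frequency cutoff $\chi_{0}(\xi)$ supported in $\{|\xi|\leq 2\}$, the amplitude is decomposed as
\[
\sigma(x,\xi)=\sigma(x,\xi)\chi_{0}(\xi)+\sum_{\nu=1}^{N}\sigma(x,\xi)(1-\chi_{0}(\xi))\chi_{\nu}(\xi/|\xi|),
\]
giving finitely many pieces $a_{\nu}\in S^{m}(\R^{n})$, each localised in $\xi$ around $\zeta_{\nu}$. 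The low-frequency term has amplitude compactly supported in $\xi$, so for any fixed $\zeta\in\mathbb{S}^{n-1}$ the factor $\exp(i\varphi(x,\xi)-i\nabla_{\xi}\varphi(x,\zeta)\cdot\xi)$ is smooth and uniformly bounded together with its derivatives, and can be absorbed into a new $S^{m}$ amplitude to yield one extra summand of the required form.

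For each high-frequency piece I set $\theta_{\nu}(x,\xi):=\varphi(x,\xi)-\nabla_{\xi}\varphi(x,\zeta_{\nu})\cdot\xi$. Writing $\omega:=\xi/|\xi|$, the positive homogeneity of $\varphi$ of degree one in $\xi$ combined with Euler's identity $\varphi(x,\zeta_{\nu})=\nabla_{\xi}\varphi(x,\zeta_{\nu})\cdot\zeta_{\nu}$ yields the key identity
\[
\theta_{\nu}(x,\xi)=|\xi|\bigl(\varphi(x,\omega)-\varphi(x,\zeta_{\nu})-\nabla_{\xi}\varphi(x,\zeta_{\nu})\cdot(\omega-\zeta_{\nu})\bigr),
\]
exhibiting $\theta_{\nu}$ as $|\xi|$ times the second-order Taylor remainder of $\varphi(x,\cdot)$ at $\zeta_{\nu}$, which is the algebraic fact powering the whole reduction.

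The heart of the argument, and the step I expect to require most care, is to verify that $\theta_{\nu}\in\Phi^{1}$. For $|\alpha|\geq 2$ the linear correction disappears, so $\partial_{\xi}^{\alpha}\partial_{x}^{\beta}\theta_{\nu}=\partial_{\xi}^{\alpha}\partial_{x}^{\beta}\varphi=O(|\xi|^{1-|\alpha|})$ directly from $\varphi\in\Phi^{2}$. For $|\alpha|=1$, $\partial_{\xi_{j}}\theta_{\nu}(x,\xi)=\partial_{\xi_{j}}\varphi(x,\xi)-\partial_{\xi_{j}}\varphi(x,\zeta_{\nu})$ is a difference of two degree-zero homogeneous functions, and since $\omega$ lies in a small cap around $\zeta_{\nu}$ a mean-value estimate combined with the $\Phi^{2}$ bounds on $\partial^{2}_{\xi\xi}\varphi$ yields uniform boundedness; the same reasoning handles additional $x$-derivatives. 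For $\alpha=0$ and $|\beta|\geq 1$, the displayed Taylor representation together with the $\Phi^{2}$ bounds on $\partial_{x}^{\beta}\partial_{\xi}\varphi$ gives $|\partial_{x}^{\beta}\theta_{\nu}|\lesssim|\xi|$, which is exactly the $\Phi^{1}$ bound at that order. Any apparent singularity of $\theta_{\nu}$ at $\xi=0$ is harmless because $a_{\nu}$ vanishes there, so $\theta_{\nu}$ can be smoothly redefined off the support of $a_{\nu}$. The second assertion of the lemma, for operators with phase $\varphi(y,\xi)-x\cdot\xi$, follows by applying the identical conical decomposition and Taylor identity to the $y$-dependent part of the phase; the term $-x\cdot\xi$ is a spectator that is carried unchanged through the reduction.
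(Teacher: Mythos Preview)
The paper does not give a proof but cites \cite{memoirs}*{Lemma 1.10}. Your approach---a conical partition of unity on $\mathbb{S}^{n-1}$ followed by the Taylor identity expressing $\theta_\nu$ as $|\xi|$ times the second-order remainder of $\varphi(x,\cdot)$ at $\zeta_\nu$---is exactly the standard argument, and your verification that $\theta_\nu\in\Phi^1$ on the cone around $\zeta_\nu$ (handling $|\alpha|\geq 2$, $|\alpha|=1$, and $\alpha=0$ separately) is correct.

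There is one genuine slip in your handling of the low-frequency piece. You assert that on $\{|\xi|\leq 2\}$ the factor $\exp\bigl(i\varphi(x,\xi)-i\nabla_\xi\varphi(x,\zeta)\cdot\xi\bigr)$ is smooth with uniformly bounded derivatives and can be absorbed into a new $S^m$ amplitude. This is false: $\varphi$ is only smooth on $\R^n\times(\R^n\setminus\{0\})$, and already $\partial_{\xi_j}\varphi(x,\xi)$, being homogeneous of degree zero, has no limit as $\xi\to 0$ unless $\varphi$ is linear in $\xi$. Subtracting a linear term does not cure this, so neither $e^{i\theta}$ nor the absorbed amplitude is smooth at the origin, and the claimed $S^m$ membership fails. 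The fix is simple and does not disturb the rest of your argument: either drop the low/high split entirely---the lemma as stated does not assert that the output amplitude lies in $S^m$, only that it is localised around $\zeta$, and $a_\nu=\sigma\,\chi_\nu(\xi/|\xi|)$ already has that form with $\theta_\nu\in\Phi^1$ on its support---or apply the same conical decomposition to the low-frequency term as well, so that every summand is supported in a narrow cone where your $\Phi^1$ verification for $\theta_\nu$ applies verbatim.
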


We will state the following lemma originally due to J. Peetre \cite{Peetre}, whose proof can be found in \cite{Trie83}*{Section 2.3.6}, which in combination with the previous lemma, turns out to be very useful later on in proving the boundedness of the low frequency part of FIOs. 
\begin{lem}\label{grafakos lemma 1}
Let $f\in \mathcal C^1(\R^n)$ with Fourier support inside the unit ball. Then for every $\rho>0$ and $\displaystyle r\geq\frac{n}{\rho}$ one has
\m{
\left (\langle \cdot\rangle^{-\rho} \ast |f|\right )(x)\lesssim \Big (M(|f|^r)(x)\Big ) ^{\frac{1}{r}},
}
where $M$ denotes the Hardy-Littlewood maximal function on $\R^n$.
\end{lem}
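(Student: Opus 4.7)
My plan is to reduce the lemma to Peetre's pointwise maximal inequality for bandlimited functions and then perform a routine weighted integration. Since $\widehat{f}$ is supported in the unit ball, the Paley--Wiener theorem identifies $f$ with an entire function of exponential type one. For such functions the following mean-value (quasi-subharmonicity) estimate holds for any $r>0$:
\begin{equation*}
|f(w)|^{r}\lesssim \frac{1}{|B(w,1)|}\int_{B(w,1)}|f(z)|^{r}\dd z.
\end{equation*}
Covering the ball $B(x-y,1)$ by a controlled number of unit balls contained in $B(x,C(1+|y|))$ and recognising the resulting average as dominated by the Hardy--Littlewood maximal function, one upgrades this to the pointwise Peetre bound
\begin{equation*}
|f(x-y)|\lesssim (1+|y|)^{n/r}\bigl(M(|f|^{r})(x)\bigr)^{1/r}, \qquad x,y\in\R^{n}.
\end{equation*}

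Plugging this into the convolution and pulling the maximal function out of the integral yields
\begin{equation*}
\bigl(\langle\cdot\rangle^{-\rho}\ast|f|\bigr)(x)=\int_{\R^n}\langle y\rangle^{-\rho}|f(x-y)|\dd y\lesssim \bigl(M(|f|^{r})(x)\bigr)^{1/r}\int_{\R^{n}}\langle y\rangle^{-\rho}(1+|y|)^{n/r}\dd y,
\end{equation*}
and the hypothesis $r\geq n/\rho$ is precisely what is needed so that the combined weight on the right decays sufficiently rapidly to be integrable, with the last factor reducing to a constant depending only on $n$, $\rho$ and $r$.

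The principal obstacle, and the only step that genuinely uses the Fourier support hypothesis, is the pointwise Peetre bound displayed above: it rests on combining the Paley--Wiener representation with the mean-value inequality for entire functions of exponential type, after which a standard covering argument produces the $(1+|y|)^{n/r}$ growth factor that is eventually balanced against $\langle y\rangle^{-\rho}$. Once this pointwise estimate is in hand, the remainder of the proof is a textbook weighted integral computation.
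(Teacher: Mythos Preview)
Your argument has a genuine gap at the final integrability step. You assert that under the hypothesis $r\geq n/\rho$ the weight $\langle y\rangle^{-\rho}(1+|y|)^{n/r}$ is integrable over $\R^n$; but the combined exponent at infinity is $n/r-\rho$, and integrability in $\R^n$ requires $n/r-\rho<-n$, i.e.\ $\rho>n+n/r$. The hypothesis only gives $n/r-\rho\leq 0$, so the weight is merely bounded and the integral $\int_{\R^n}\langle y\rangle^{-\rho}(1+|y|)^{n/r}\,\dd y$ diverges. Pulling the full Peetre factor $(1+|y|)^{n/r}$ out of the convolution costs you exactly $n$ powers of decay too many.

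The pointwise Peetre bound you state is correct and is indeed the heart of the matter; the issue is how it is deployed. The standard argument (the paper itself gives no proof but cites Triebel, where essentially the following is carried out) proceeds, for $0<r\leq 1$, by writing $|f(x-y)|=|f(x-y)|^{1-r}\,|f(x-y)|^{r}$ and applying Peetre's bound only to the first factor:
\[
|f(x-y)|\lesssim (1+|y|)^{\frac{n(1-r)}{r}}\bigl(M(|f|^{r})(x)\bigr)^{\frac{1-r}{r}}\,|f(x-y)|^{r}.
\]
Substituting into the convolution leaves $\bigl(M(|f|^{r})(x)\bigr)^{(1-r)/r}$ times the integral $\int_{\R^n}\langle y\rangle^{-\rho+n(1-r)/r}|f(x-y)|^{r}\,\dd y$. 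The new kernel $\langle y\rangle^{-(\rho-n/r+n)}$ is a radially decreasing $L^{1}$ function precisely when $\rho>n/r$, and then this last integral is $\lesssim M(|f|^{r})(x)$ by the elementary domination of convolution with radial decreasing $L^{1}$ kernels by the Hardy--Littlewood maximal function. This supplies the missing factor $\bigl(M(|f|^{r})(x)\bigr)^{1}$ and closes the estimate. The key difference from your approach is that only a fraction $1-r$ of the Peetre growth is extracted, leaving enough decay in the remaining kernel to keep it integrable while still controlling the residual integral by the maximal function of $|f|^{r}$.
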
\hspace*{1cm}\\

\noindent Since pseudodifferential operators are not in general $L^p $ bounded for $0<p\leq 1$, we will also need a weaker version of an $L^p$ space. Hence, following H. Triebel \cite{Trie83}, we define the $L^p$ spaces with compact Fourier support.
\begin{defn}\label{def:Lp Fourier support}
Let $0<p\leq \infty$ and $\mathcal{K} \subset \R^n$ be a compact set. Define
\begin{equation*}
L_\mathcal{K}^p(\R^n) := \left \{ f\in \S ' (\R^n) \, :\, \norm{f}_{L^p(\R^n)} <\infty,  \,\supp \widehat f \subset \mathcal{K}  \right \}
\end{equation*}
Observe that other authors may use the notation $L_p^\mathcal{K}(\R^n)$, see e.g. \cite{Trie83}.
\end{defn}
\noindent In connection to this and the convolution of functions in $L^p_\mathcal{K}(\R^n)$ spaces, the following lemma, whose proof can be found in Remark 2 of \cite{Trie83}*{p. 28}, is quite useful.
\begin{lem}\label{lem:hassesfaltning}
Let $\mathcal{K} := \overline{B(0,r)}$ for some $r>0$ and let $f,g\in L^p_\mathcal{K}(\R^n)$ for $0<p\leq 1$. Then
\m{
\norm{f*g}_{L^p(\R^n)}\lesssim r^{n\left ( \frac 1p -1\right )} \norm{f}_{L^p(\R^n)}\norm{g}_{L^p(\R^n)}.
}

\end{lem}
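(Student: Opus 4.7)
The natural strategy is to reduce to the case $r=1$ by dilation and then to exploit the band-limited structure, replacing Young's inequality — which is unavailable in the quasi-Banach range — by the $p$-subadditivity of the $L^p$ quasi-norm combined with a Plancherel--P\'olya/sampling argument. Setting $\tilde f(y):=f(y/r)$ and $\tilde g(y):=g(y/r)$, one checks that these are Fourier-supported in $\overline{B(0,1)}$, that $\|\tilde f\|_{L^p(\R^n)}=r^{n/p}\|f\|_{L^p(\R^n)}$ (and likewise for $\tilde g$), and that $(\tilde f*\tilde g)(y)=r^n(f*g)(y/r)$, which gives $\|\tilde f*\tilde g\|_{L^p(\R^n)}=r^{n+n/p}\|f*g\|_{L^p(\R^n)}$. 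Consequently, a scale-invariant estimate $\|\tilde f*\tilde g\|_{L^p(\R^n)}\lesssim\|\tilde f\|_{L^p(\R^n)}\|\tilde g\|_{L^p(\R^n)}$ transfers to precisely the claim with the factor $r^{n(1/p-1)}$.

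To handle the rescaled problem, fix a Schwartz interpolating kernel $\Psi$ with $\widehat\Psi$ supported in $B(0,2)$, $\widehat\Psi\equiv 1$ on $\overline{B(0,1)}$, and $\Psi(0)=1$, $\Psi(k)=0$ for every $k\in\Z^n\setminus\{0\}$. The multidimensional Plancherel--P\'olya sampling theorem then yields the representation
\[
\tilde f=\sum_{k\in\Z^n}\tilde f(k)\,\Psi(\cdot-k),
\]
with convergence in $L^p(\R^n)$. Because $\widehat\Psi\equiv 1$ on $\supp\widehat{\tilde g}\subseteq\overline{B(0,1)}$, one has $\Psi*\tilde g=\tilde g$, and substitution collapses the series for the convolution to
\[
\tilde f*\tilde g=\sum_{k\in\Z^n}\tilde f(k)\,\tilde g(\cdot-k).
\]
The $p$-subadditivity of the $L^p$ quasi-norm, which is sharp precisely for $0<p\leq 1$, together with the translation invariance of $L^p$, then delivers
\[
\|\tilde f*\tilde g\|_{L^p(\R^n)}^p\leq\sum_{k\in\Z^n}|\tilde f(k)|^p\,\|\tilde g\|_{L^p(\R^n)}^p,
\]
and the companion Plancherel--P\'olya lattice inequality $\sum_{k\in\Z^n}|\tilde f(k)|^p\lesssim\|\tilde f\|_{L^p(\R^n)}^p$ closes the rescaled estimate; combining with the dilation step produces the statement.

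The principal technical obstacle is not the algebra of the reduction but the justification, in the quasi-Banach range, of the two ingredients just invoked: the convergence of the sampling expansion in $L^p$ with $p<1$, and the discrete bound $\sum_{k}|\tilde f(k)|^p\lesssim\|\tilde f\|_{L^p(\R^n)}^p$. Both are standard in the theory of band-limited $L^p$-functions (see e.g. \cite{Trie83}), and they rest on the fact — a consequence of Peetre-type maximal inequalities such as Lemma \ref{grafakos lemma 1} — that every element of $L^p_{\overline{B(0,1)}}(\R^n)$ is automatically a continuous function with pointwise samples controlled in $\ell^p$ by its $L^p$ quasi-norm, so that the lattice manipulations above can be rigorously carried out.
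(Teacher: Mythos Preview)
Your argument is correct, and since the paper does not supply its own proof but merely cites \cite{Trie83}*{p.~28, Remark~2}, there is nothing to compare against directly; the dilation-plus-sampling route you take is in fact the standard Plancherel--P\'olya argument and is essentially the one found in that reference.

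One small quibble: the interpolation conditions $\Psi(0)=1$ and $\Psi(k)=0$ for $k\in\Z^n\setminus\{0\}$ that you impose are in general \emph{incompatible} with $\widehat\Psi\in C_c^\infty$ being supported in $B(0,2)\subset(-\pi,\pi)^n$; indeed those lattice values are the Fourier coefficients of $\widehat\Psi$ on the torus, and they equal $\delta_{k,0}$ only if $\widehat\Psi\equiv 1$ there. Fortunately you never use these conditions: the sampling identity $\tilde f=\sum_{k}\tilde f(k)\Psi(\cdot-k)$ follows already from $\widehat\Psi\equiv 1$ on $\overline{B(0,1)}$ together with the Fourier-series expansion of $\widehat{\tilde f}$ on $(-\pi,\pi)^n$, and the reproducing property $\Psi*\tilde g=\tilde g$ needs only $\widehat\Psi\equiv 1$ on $\supp\widehat{\tilde g}$. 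So simply drop the sentence about $\Psi(0)=1$, $\Psi(k)=0$; the rest stands as written.
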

In establishing the local boundedness of FIOs for the optimal ranges of $p$'s, the following Bernstein-type estimate will be useful. The proof can be found in \linebreak\cite{Trie83}*{p. 22}.

\begin{lem}\label{lem:bernstein}
Let $\mathcal{K} \subset \R^n$ be a compact set and let $0<p\leq r \leq \infty$. Then
\m{
\norm {\partial^\alpha f }_{L^r(\R^n)} \lesssim \norm{f}_{L^p(\R^n)}
}
for all multi-indices $\alpha$ and all $f\in L^p_\mathcal{K} (R^n)$.
\end{lem}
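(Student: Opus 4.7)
My approach reduces the derivative bound to a convolution inequality and then handles the Banach and quasi-Banach ranges separately.

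The first step is to choose $\chi \in \mathcal{C}_c^\infty(\R^n)$ with $\chi \equiv 1$ on $\mathcal{K}$. Since $\widehat{f}$ is supported in $\mathcal{K}$, one has $\widehat{\partial^\alpha f}(\xi) = (i\xi)^\alpha \chi(\xi) \widehat{f}(\xi)$, so that $\partial^\alpha f = h * f$, where $h := \mathcal{F}^{-1}[(i\cdot)^\alpha \chi] \in \mathscr{S}(\R^n)$ has Fourier support contained in the compact set $\supp \chi$.

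For the Banach range $1 \leq p \leq r \leq \infty$ I would apply Young's convolution inequality with exponent $s$ defined by $1 + 1/r = 1/s + 1/p$. Since $p \geq 1$ and $p \leq r$, this exponent lies in $[1, \infty]$, and because $h$ is Schwartz, $\|h\|_{L^s(\R^n)}$ is finite (depending only on $\mathcal{K}$ and $\alpha$). This yields $\|\partial^\alpha f\|_{L^r(\R^n)} \lesssim \|f\|_{L^p(\R^n)}$ at once.

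For the quasi-Banach range $0 < p < 1$ Young's inequality fails and I would proceed in two sub-steps. First, both $h$ and $f$ have Fourier support in a common compact ball $\overline{B(0,R)}$ with $R$ chosen so that $\supp \chi \cup \mathcal{K} \subset \overline{B(0,R)}$; hence $h, f \in L^p_{\overline{B(0,R)}}(\R^n)$, and Lemma \ref{lem:hassesfaltning} gives
$$
\|\partial^\alpha f\|_{L^p(\R^n)} = \|h*f\|_{L^p(\R^n)} \lesssim \|h\|_{L^p(\R^n)} \|f\|_{L^p(\R^n)} \lesssim \|f\|_{L^p(\R^n)}.
$$
The passage to $L^r$ for $p < r \leq \infty$ is then obtained by invoking the Plancherel-Polya/Nikolskii-type embedding $L^p_{\mathcal{K}}(\R^n) \hookrightarrow L^r_{\mathcal{K}}(\R^n)$ on $\partial^\alpha f$, which retains its Fourier support in $\mathcal{K}$.

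The main obstacle is precisely this Nikolskii embedding for $p < 1$. The standard route combines the pointwise Peetre-type bound $|g(x)| \lesssim (M(|g|^p)(x))^{1/p}$, which is a consequence of Lemma \ref{grafakos lemma 1} applied after rescaling (since the kernel $\check{\chi}$ satisfies $|\check{\chi}(z)| \lesssim \langle z \rangle^{-\rho}$ for arbitrarily large $\rho$), with the classical sub-mean-value inequality $|g(x)|^p \lesssim \int_{B(x, C)} |g(y)|^p \, dy$ for band-limited $g$ (derived from Paley-Wiener). Taking the supremum in this mean-value inequality delivers the endpoint $\|g\|_{L^\infty(\R^n)} \lesssim \|g\|_{L^p(\R^n)}$, and log-convexity of $L^r$-norms between $L^p$ and $L^\infty$ then covers intermediate $r \in (p, \infty)$, closing the argument.
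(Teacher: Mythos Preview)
The paper does not actually prove this lemma; it simply cites Triebel's book \cite{Trie83}*{p.~22}, as this is the classical Bernstein--Nikolskii inequality. Your argument is a correct reconstruction of a standard proof: the reduction to a convolution with a Schwartz function of compact Fourier support, Young's inequality in the Banach range, and the appeal to Lemma~\ref{lem:hassesfaltning} for the $L^p$--$L^p$ estimate when $p<1$ are all sound.

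The only part that deserves a small comment is your final paragraph. You present the Peetre-type bound $|g(x)|\lesssim (M(|g|^p)(x))^{1/p}$ and the sub-mean-value inequality as two ingredients to be combined, but in fact the maximal-function bound by itself does not yield $\|g\|_{L^\infty}\lesssim\|g\|_{L^p}$ (the maximal function of an $L^1$ function need not be bounded). What actually closes the argument is the bootstrapping step you allude to: writing $g=\check\chi\ast g$, estimating $|g(y)|\leq\|g\|_{L^\infty}^{1-p}|g(y)|^p$ inside the convolution, and absorbing the resulting factor $\|g\|_{L^\infty}^{1-p}$ into the left-hand side to get $\|g\|_{L^\infty}^p\lesssim\|g\|_{L^p}^p$. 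Once that endpoint is in hand, log-convexity between $L^p$ and $L^\infty$ gives the intermediate $r$, exactly as you say. So the logic is right; just be aware that the Peetre/maximal-function step is not really needed here, and the sub-mean-value (or equivalently the bootstrapping) is doing all the work.
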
\hspace*{1cm}\\

\noindent In order to establish $L^p$ estimates ($0< p \leq1$) for a generic Littlewood-Paley piece of the FIOs, we will estimate the so called \textit{Peetre's maximal functions} by the Hardy-Littlewood maximal operators as in the following lemma:
\begin{lem}\label{lem:Triebel-Schmeisser}
Let $\big \{f_{j,k} \big \}\in L^p_{\mathcal K_{j,k}}(\R^n)$ with $$\mathcal K_{j,k}:= \left \{ (\xi_1,\xi_2)\in \R^{n'} \times \R^{n-n'}:\ |\xi_1|\leq c_{j,k}, |\xi_2|\leq d_{j,k} \right \}$$ where $0\leq n'\leq n$ and $c_{j,k}$ and $d_{j,k}$ are some positive constants. Futhermore, let $x:=(x_1,x_2)\in \R^{n'} \times \R^{n-n'}$ and $z:=(z_1,z_2) \in \R^{n'} \times \R^{n-n'}$. Then one has
\begin{equation*}
\sup_{z\in \R^n}{\frac{|f_{j,k} (z)|}{\left (1+(c_{j,k}|x_1-z_1|)^{\frac 1 {r_1}} \right ) \left (1+(d_{j,k} |x_2-z_2|)^{\frac 1 {r_2}}\right )}} \lesssim \left (M_{2}\left (M_{1}\left |f_{j,k} \right |^{r_1} \right )^{\frac{r_2}{r_1}}\right )^{\frac{1}{r_2}}(x)
\end{equation*}
uniformly in $j,k$ and for all $r_1,r_2>0$ small enough. Here $M_{1}$ is the Hardy-Littlewood maximal function acting on the function in the $x_1$ variable, i.e.
\begin{equation*}
M_{1}f(x):= \sup_{\delta>0} \frac{1}{|B(x_1,\delta)|} \int_{B(x_1,\delta)} |f(y_1,x_2)| \dd y_1,
\end{equation*}
and $M_{2}$ is defined in a similar way.
\end{lem}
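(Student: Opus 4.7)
This is a product-type (anisotropic) version of Peetre's classical maximal function inequality, obtained by iterating the one-variable Peetre estimate once in the $x_1$-block of variables and once in the $x_2$-block. The critical structural input is the product form of the Fourier support $\mathcal{K}_{j,k}$: for each fixed $z_2 \in \mathbb{R}^{n-n'}$, the slice $z_1 \mapsto f_{j,k}(z_1, z_2)$ has $\mathbb{R}^{n'}$-Fourier support in $\{|\xi_1| \leq c_{j,k}\}$ (by Fubini applied to $\widehat{f_{j,k}}$), and symmetrically for fixed $z_1$.

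I would begin by applying the classical Peetre maximal inequality in $\mathbb{R}^{n'}$ (cf.\ \cite{Trie83}, Section 1.3.1) to this band-limited slice, which gives, for $r_1 > 0$ sufficiently small and uniformly in $z_2$,
\begin{equation*}
\sup_{z_1 \in \mathbb{R}^{n'}} \frac{|f_{j,k}(z_1, z_2)|}{1 + (c_{j,k}|x_1 - z_1|)^{1/r_1}} \lesssim \bigl[M_1(|f_{j,k}|^{r_1})(x_1, z_2)\bigr]^{1/r_1}.
\end{equation*}
Next, I would carry out the analogous step in the $z_2$-direction. A direct application of Peetre to the right-hand side above is not possible, because $[M_1(|f_{j,k}|^{r_1})(x_1, \cdot)]^{1/r_1}$ is not band-limited in $z_2$ (the maximal function does not preserve Fourier support). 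The remedy is to interchange the order of the sup operations: apply Peetre in $z_2$ to each band-limited slice $z_2 \mapsto f_{j,k}(z_1, z_2)$ first, then combine with the $z_1$-estimate. After the appropriate bookkeeping, in which the exponents are arranged so that Peetre's integrability condition is satisfied in each block (which is precisely why both $r_i$ must be taken small enough), one arrives at
\begin{equation*}
\sup_{z \in \mathbb{R}^n} \frac{|f_{j,k}(z)|}{\bigl(1 + (c_{j,k}|x_1 - z_1|)^{1/r_1}\bigr)\bigl(1 + (d_{j,k}|x_2 - z_2|)^{1/r_2}\bigr)} \lesssim \Bigl(M_2\bigl[(M_1 |f_{j,k}|^{r_1})^{r_2/r_1}\bigr](x)\Bigr)^{1/r_2}.
\end{equation*}

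The main technical obstacle is precisely this second application of Peetre, which must accommodate a non-band-limited intermediate object. The resolution, namely applying Peetre to band-limited slices in each variable block and composing the maximal functions in the correct order, is standard in the anisotropic function-space literature; the product structure of $\mathcal{K}_{j,k}$ is what makes the argument go through. The uniformity in $j, k$ is automatic since the constants arising in Peetre's inequality depend only on $r_1, r_2$ and on the product structure, not on the specific values of $c_{j,k}$ and $d_{j,k}$.
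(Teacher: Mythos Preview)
Your plan is in the right spirit, but there is a real gap at exactly the point you flag. Your proposed fix, ``interchange the order of the sup operations,'' does not resolve the obstacle: by symmetry of the product supremum, $P_2(P_1 f)=P_1(P_2 f)$, so applying Peetre first in $z_2$ and then trying to handle $z_1$ runs into the identical problem---$[M_2(|f(z_1,\cdot)|^{r_2})]^{1/r_2}$ is no more band-limited in $z_1$ than $[M_1(|f|^{r_1})]^{1/r_1}$ is in $z_2$. The genuine proof of the iterated Peetre inequality (as in Schmeisser--Triebel) does not proceed by two black-box applications of the scalar inequality; it reopens the proof of Peetre's inequality and uses the sub-mean-value property of band-limited functions in each block of variables together with a bootstrap/absorption argument against the anisotropic Peetre maximal function itself. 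Your sketch, as written, does not supply that mechanism.

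By contrast, the paper's proof avoids all of this. It quotes the unit-scale version of the inequality (for $g$ with Fourier support in $\{|\xi_1|\le 1,\ |\xi_2|\le 1\}$) directly from \cite{Triebel-Schmeisser}, noting that the argument there is given for $n=2$, $n'=1$ but extends verbatim to higher-dimensional blocks. The general statement then follows in one line by setting $g(x_1,x_2):=f_{j,k}(x_1/c_{j,k},\,x_2/d_{j,k})$ and unwinding the scaling through the weights and through $M_1,M_2$; the uniformity in $j,k$ is immediate because the unit-scale inequality carries no parameters. So the paper's route is strictly shorter: cite the normalised case, then rescale. If you want to keep your approach, you should either carry out the Schmeisser--Triebel bootstrap explicitly or, more economically, adopt the paper's reduction-by-scaling and cite the unit-scale result.
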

\begin{proof}
Let $g\in L_{\mathcal K}^p(\R^n)$ for $\mathcal K :=  \left \{ (\xi_1,\xi_2)\in \R^{n'} \times \R^{n-n'}:\ |\xi_1|\leq 1, |\xi_2|\leq 1 \right \}$. Then
\begin{equation}\label{eq:triebel-schmeisser}
\sup_{z\in \R^n}{\frac{|\partial^\alpha g (z)|}{\left (1+|x_1-z_1|^{\frac 1 {r_1}} \right ) \left (1+ |x_2-z_2|^{\frac 1 {r_2}}\right )}} \lesssim \left (M_{2}\big (M_{1}\left |g \right |^{r_1} \big )^{\frac{r_2}{r_1}}\right )^{\frac{1}{r_2}}(x).
\end{equation}
The proof of \eqref{eq:triebel-schmeisser} when $n=2$ and $n'=1$ can be found in \cite{Triebel-Schmeisser}*{p. 48, equation (1)}. By carefully tracing that proof, one can generalise the result to \eqref{eq:triebel-schmeisser}. Then the lemma follows by setting $f_{j,k}(x_1,x_2) := g(c_{j,k}x_1,d_{j,k}x_2)$ and $\alpha=0$.
\end{proof}\hspace*{1cm}\\

\noindent Finally we state the following version of the non-stationary phase lemma, whose proof can be found in \cite{JFA}*{Lemma 3.2}.
\begin{lem}\label{technic}
Let $\mathcal{K}\subset \mathbb{R}^n$ be a compact set and  $\Omega \supset \mathcal{K}$ an open set. Assume that $\Phi$ is a real valued function in $\mathcal C^{\infty}(\Omega )$ such that $|\nabla \Phi|>0$ and
$|\d^\alpha \Phi| \lesssim |\nabla \Phi|$
for all multi-indices $\alpha$ with $|\alpha|\geq 1$.
Then, for any $F\in \mathcal C^\infty_c (\mathcal{K})$, $\lambda>0$ and any integer $k\geq 0$,
\begin{equation*}
	 \abs{ \int_{\R ^n} F(\xi)\, e^{i\lambda \Phi(\xi)}\, \ddd \xi }
     \leq C_{k,n,\mathcal{K}} \lambda^{-k} \sum_{|\alpha| \leq k} \int_\mathcal{K} |\d^{\alpha} F(\xi)| \, |\nabla \Phi(\xi)|^{-k}\, \ddd \xi.
\end{equation*}
\end{lem}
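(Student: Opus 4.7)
The plan is to carry out a standard non-stationary phase integration by parts argument, exploiting the fact that $|\nabla \Phi|>0$ on the compact support of $F$. First I would introduce the first-order differential operator
\[
L f := \frac{1}{i|\nabla \Phi|^2}\, \nabla \Phi \cdot \nabla f,
\]
which satisfies $L(e^{i\Phi}) = e^{i\Phi}$ pointwise on $\mathcal{K}$. Its formal transpose is
\[
L^t f = -\frac{1}{i} \sum_{j=1}^n \partial_j\!\left( \frac{\partial_j \Phi}{|\nabla \Phi|^2}\, f \right),
\]
and since $F \in \mathcal{C}^\infty_c(\mathcal{K})$ and $L^t$ has smooth coefficients on a neighbourhood of $\mathcal{K}$, $k$-fold integration by parts yields
\[
\int_{\R^n} F(\xi)\, e^{i\Phi(\xi)}\, \ddd\xi = \int_{\R^n} (L^t)^k F(\xi)\, e^{i\Phi(\xi)}\, \ddd\xi.
\]

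The core of the argument is then to prove by induction on $k$ that $(L^t)^k F$ can be written as a finite linear combination $\sum_{|\alpha| \leq k} c^{(k)}_\alpha(\xi)\, \partial^\alpha F(\xi)$, where each coefficient satisfies the uniform bound $|\partial^\beta c^{(k)}_\alpha(\xi)| \lesssim |\nabla \Phi(\xi)|^{-k}$ for every multi-index $\beta$. To set up the induction, I would first establish two auxiliary estimates: $|\partial^\beta (|\nabla \Phi|^{-2})| \lesssim |\nabla \Phi|^{-2}$ and $|\partial^\beta ( \partial_j \Phi / |\nabla \Phi|^2 )| \lesssim |\nabla \Phi|^{-1}$, both valid for every $\beta$. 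These follow from the hypothesis $|\partial^\alpha \Phi| \lesssim |\nabla \Phi|$ (for $|\alpha|\geq 1$) via Leibniz and Faà di Bruno applied to $|\nabla \Phi|^2 = \sum_j (\partial_j \Phi)^2$ and its reciprocal. The inductive step is then a direct application of $L^t$ to a typical summand $c(\xi)\, \partial^\alpha F(\xi)$: the product rule produces terms of the form $b_j c\, \partial^{\alpha+e_j} F$ (raising the derivative order by one) and $\partial_j(b_j c)\, \partial^\alpha F$ (keeping it fixed), where $b_j := \partial_j \Phi / |\nabla \Phi|^2$. Both new coefficients pick up one extra factor of $|\nabla \Phi|^{-1}$ by the auxiliary bounds and Leibniz, so the induction closes.

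Combining these two ingredients and taking absolute values inside the integral gives
\[
\left| \int_{\R^n} F(\xi)\, e^{i\Phi(\xi)}\, \ddd\xi \right| \leq \int_{\mathcal{K}} |(L^t)^k F(\xi)|\, \ddd\xi \lesssim \sum_{|\alpha|\leq k} \int_{\mathcal{K}} |\partial^\alpha F(\xi)|\, |\nabla \Phi(\xi)|^{-k}\, \ddd\xi,
\]
which is the asserted estimate, with a constant depending only on $k$, $n$ and the implicit constants in $|\partial^\alpha \Phi| \lesssim |\nabla \Phi|$ (hence on $\mathcal{K}$).

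The main technical obstacle lies in the bookkeeping of the derivatives of $|\nabla\Phi|^{-2}$: a naive application of the chain rule would suggest $|\partial^\beta (|\nabla\Phi|^{-2})| \lesssim |\nabla\Phi|^{-2-|\beta|}$, but the assumption $|\partial^\alpha \Phi| \lesssim |\nabla \Phi|$ provides the crucial improved bound $|\partial^\beta (|\nabla\Phi|^{-2})| \lesssim |\nabla\Phi|^{-2}$, independent of $|\beta|$. Without this refinement each application of $L^t$ would appear to cost more than one power of $|\nabla \Phi|^{-1}$, and the induction would not close at the claimed rate $|\nabla\Phi|^{-k}$. Ensuring that this uniform estimate propagates through the repeated application of $L^t$ and Leibniz is the delicate point of the argument.
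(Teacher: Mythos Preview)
Your argument is correct and is precisely the standard non-stationary phase proof; the paper does not give its own proof but refers to \cite{JFA}*{Lemma 3.2}, where the same integration-by-parts scheme with the transpose of $L=\dfrac{\nabla\Phi\cdot\nabla}{i|\nabla\Phi|^2}$ is carried out. Your identification of the key point---that the hypothesis $|\partial^\alpha\Phi|\lesssim|\nabla\Phi|$ forces $|\partial^\beta(|\nabla\Phi|^{-2})|\lesssim|\nabla\Phi|^{-2}$ rather than $|\nabla\Phi|^{-2-|\beta|}$, so each application of $L^t$ costs exactly one power of $|\nabla\Phi|^{-1}$---is exactly right and is what makes the induction close.
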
\hspace*{1cm}\\

\section{The Seeger-Sogge-Stein decomposition and the associated kernel estimates}\label{SSS section}
In connection to the study of the $L^p$ regularity of FIOs, A. Seeger. C. Sogge and E. Stein introduced a second dyadic decomposition superimposed on a preliminary Littlewood-Paley decomposition, in which each dyadic shell $\left \{2^{j-1}\leq
\vert \xi\vert\leq 2^{j+1}\right \}$ (as in Definition \ref{def:LP}) is further partitioned into truncated cones of thickness roughly $2^{\frac{j}{2}}$ and one can prove that $O\brkt{2^{j\frac{n-1}{2}}}$
such elements are needed to cover one shell.
\begin{defn}\label{def:LP2}
For each $j\in\N$ we fix a collection of unit
vectors $\big \{\xi^{\nu}_{j}\big \} $ that satisfy the following two conditions.
\begin{enumerate}
\item [\emph{(i)}]$\displaystyle \big | \xi^{\nu}_{j}-\xi^{\nu'}_{j} \big |\geq 2^{-\frac{j}{2}},$ if $\nu\neq \nu '$.
\item [\emph{(ii)}]If $\xi\in\mathbb{S}^{n-1}$, then there exists a $
\xi^{\nu}_{j}$ so that $\big \vert \xi -\xi^{\nu}_{j} \big \vert
<2^{-\frac{j}{2}}$.
\end{enumerate}
One can take a collection $\{\xi_{j}^{\nu}\}$ which is maximal with respect to the first property and there are at most $O\brkt{2^{j\frac {n-1}2}}$ elements in the collection $\{\xi_{j}^{\nu}\}$.\\ 

Let $\Gamma^{\nu}_{j}$ denote the cone in the $\xi$ space whose
 central direction is $\xi^{\nu}_{j}$, i.e.
\begin{equation}\label{eq:gammajnu}
    \Gamma^{\nu}_{j}:=\set{ \xi\in\R^n :\,  \abs{ \frac{\xi}{\vert\xi\vert}-\xi^{\nu}_{j}}\leq 2\cdot 2^{-\frac{j}{2}}}.
\end{equation}
One also defines 
\m{\eta_j^\nu (\xi) := \phi\brkt{2^{\frac j2}\brkt{\frac {\xi}{|\xi|}-\xi_j^\nu}},}
where $\phi$ is a nonnegative function in $ \mathcal C_c^\infty(\R^n)$ with $\phi(u)=1$ for $|u|\leq 1$ and $\phi(u)=0$ for $u\geq 2$.
\end{defn}

\noindent As was done in \cite{SSS} one could set $$\chi_j^\nu := \eta_j^\nu \brkt{\sum_{\nu} \eta_j^\nu}^{-1}$$ which is in $\mathcal C^\infty(\R^n\setminus \{0\} )$ and supported in the cone $\Gamma_j^\nu$ satisfying the estimates \nm{eq:quadrseconddyadcond}{\abs{\d_\xi^\alpha \chi_j^\nu (\xi) }\lesssim 2^{j\frac {\abs \alpha}2}\abs\xi^{-\abs\alpha}} for all multi-indices $\alpha$ and
\nm{eq:quadrseconddyadcond2}{ \left \vert \partial
^{N}_{\xi_{1}}\chi^{\nu}_{j}(\xi) \right \vert\leq C_{N}
\vert \xi\vert ^{-N}, \quad  \text{for} \quad N\geq 1,}
if one chooses the axis in $\xi$-space such that $\xi_1$ is in the direction of $\xi^{\nu}_{j}$ and \linebreak $\xi'=(0,\xi _2 , \dots , \xi_{n})$ is perpendicular to $\xi^{\nu}_{j}$. With this construction, it is also clear that
\begin{equation}\label{the first partition of unity}
\sum _{\nu}\chi^{\nu}_{j}(\xi) =1,\quad \text{ for all } j\text { and } \xi\neq 0.
\end{equation}

Therefore, if $\psi_j$ is chosen as in Definition \ref{def:LP}, one has \begin{equation}\label{PL partition of unity}
\psi_{0}(\xi)+\sum_{j=1}^{\infty}\sum_{\nu}\chi_{j}^{\nu}(\xi)\psi_{j}(\xi)=1, \quad \text{for all } \xi\in \mathbb{R}^{n}.
\end{equation}

It is sometimes useful to use a slightly different partition of unity by setting \begin{equation}\label{chitilde}
\tilde{\chi}_j^\nu := \eta_j^\nu \brkt{\sum_\nu \big(\eta_j^\nu\big)^2}^{-\frac{1}{2}},
\end{equation}
which satisfies \nm{eq:quadrseconddyad}{\sum_\nu \tilde{\chi}_j^\nu(\xi)^2 = 1,\quad \text{ for all } j\text { and } \xi\neq 0.} Once again, one can show that \eqref{eq:quadrseconddyadcond} and \eqref{eq:quadrseconddyadcond2} are also satisfied for $\tilde{\chi}_j^\nu.$\\

 Using the Littlewood-Paley localisation $\psi_j$ and the second dyadic frequency localisation $\chi_j^\nu$, we have the following estimate for the localised high frequency part of the kernels:

\begin{lem}\label{lem:kernelestimate}
Let $j\geq 1$ and set \nm{eq:SSS kernel defintion}{K_j^\nu (x,y) := \int_{\R^n} e^{i\varPhi(x,y,\xi)}\psi_j(\xi)\chi_j^\nu(\xi) a(x,y,\xi)\ddd\xi } where  $a\in A^m(\R^n),$
$\varPhi(x,y,\xi)= \varphi(x,\xi) -y\cdot\xi$ or $\varPhi(x,y,\xi)=x\cdot\xi -\varphi(y,\xi)$
and $\varphi(x,\xi)\in \Phi^2.$
Then for all $N\geq 0$, the kernel $K_j^\nu$ satisfies the estimate 
\nm{eq:claimkernel}
{
|{\partial_y^\alpha}K_{j}^{\nu}(x, y)|  \lesssim \frac{2^{j\brkt{m+\frac{n+1}{2}+\abs\alpha}}}{\left  (1+\big|2^{j}\nabla_{\xi_1}\varPhi(x,y,\xi_{j}^{\nu})\big|^{2}\right )^{N} \left (1+\big |2^{\frac j2}\nabla_{\xi'}\varPhi(x,y,\xi_{j}^{\nu})\big |^{2}\right )^{N}}.
}
\end{lem}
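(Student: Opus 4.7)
The plan is to integrate by parts in $\xi$ using two anisotropic differential operators adapted to the cone $\Gamma_j^\nu$. First I would choose coordinates so that $\xi_1$ points in the direction $\xi_j^\nu$ and $\xi'=(\xi_2,\ldots,\xi_n)$ is perpendicular; on $\supp(\psi_j\chi_j^\nu)$ one has $|\xi_1|\sim 2^j$ and $|\xi'|\lesssim 2^{j/2}$, so this frequency region has volume $O(2^{j(n+1)/2})$. Next I would commute $\partial_y^\alpha$ into the integral: for both forms of $\Phi$, applying $\partial_y$ to $e^{i\Phi}$ brings down $\partial_y\Phi$, which is either $-\xi$ or $-\nabla_y\varphi(y,\xi)$, and using $|\partial_y^\beta\varphi|\lesssim|\xi|$ (a consequence of $\varphi\in\Phi^2$ and Euler's homogeneity relation) one verifies $|\partial_y^\alpha e^{i\Phi}|\lesssim 2^{j|\alpha|}$. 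Absorbing this factor into the amplitude produces a new symbol of size $\lesssim 2^{j(m+|\alpha|)}$, and estimating trivially by the volume of the support yields the naive bound $|\partial_y^\alpha K_j^\nu|\lesssim 2^{j(m+(n+1)/2+|\alpha|)}$, which is \eqref{eq:claimkernel} without the denominator factors.

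To recover the decay, I would introduce
\[
L_1:=\frac{1-i\,2^{2j}(\partial_{\xi_1}\Phi)\,\partial_{\xi_1}}{1+2^{2j}|\partial_{\xi_1}\Phi|^2}, \qquad L_2:=\frac{1-i\,2^{j}\,\nabla_{\xi'}\Phi\cdot\nabla_{\xi'}}{1+2^{j}|\nabla_{\xi'}\Phi|^2},
\]
both of which fix $e^{i\Phi}$ by a direct computation. Applying $(L_1^t)^N(L_2^t)^N$ and integrating by parts, each application of $L_i^t$ contributes a factor of $(1+2^{2j}|\partial_{\xi_1}\Phi|^2)^{-1}$ or $(1+2^j|\nabla_{\xi'}\Phi|^2)^{-1}$, together with at most one $\partial_{\xi_1}$ or $\partial_{\xi'}$ derivative distributed over the amplitude $a$, the cutoffs $\psi_j\chi_j^\nu$, the coefficients $\partial_{\xi_1}\Phi$ and $\nabla_{\xi'}\Phi$, or a previously-created denominator. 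Using the amplitude bound $|\partial_\xi^\gamma a|\lesssim\langle\xi\rangle^{m-|\gamma|}$, the $\Phi^2$-estimate $|\partial_\xi^\gamma\partial_{\xi_k}\varphi|\lesssim|\xi|^{-|\gamma|}$ for $|\gamma|\geq 1$, and the cutoff estimates \eqref{eq:quadrseconddyadcond}--\eqref{eq:quadrseconddyadcond2} together with $|\partial_\xi^\gamma\psi_j|\lesssim 2^{-j|\gamma|}$, one reads off that each $\partial_{\xi_1}$ derivative costs at most $2^{-j}$ and each $\partial_{\xi'}$ derivative costs at most $2^{-j/2}$ (the worst rate, coming from $\chi_j^\nu$). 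These losses are exactly offset by the $O(2^j)$, resp.\ $O(2^{j/2})$, sizes of the prefactors in the coefficients of $L_1^t$ and $L_2^t$, so the integrand retains its original size.

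After $N$ applications of each $L_i^t$, the integrand is bounded by its naive size times $(1+2^{2j}|\partial_{\xi_1}\Phi|^2)^{-N}(1+2^j|\nabla_{\xi'}\Phi|^2)^{-N}$, which coincides with $(1+|2^j\nabla_{\xi_1}\Phi|^2)^{-N}(1+|2^{j/2}\nabla_{\xi'}\Phi|^2)^{-N}$; integrating over the support of volume $\lesssim 2^{j(n+1)/2}$ then produces exactly \eqref{eq:claimkernel}. The main obstacle will be the recursive bookkeeping above, especially when a $\partial_\xi$ from $L_i^t$ lands on a denominator generated by a previous application: one must check by induction that the nested products remain controlled by the target denominator. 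This succeeds because differentiating a denominator produces only terms of the form $2^{2j}(\partial_{\xi_1}^2\varphi)(\partial_{\xi_1}\Phi)/(1+\cdots)^2$ and analogs, whose $2^{-j}$ decay from the $\Phi^2$-bound on second $\xi$-derivatives of $\varphi$ exactly absorbs the $2^{2j}$ scaling, while the extra $\partial_{\xi_1}\Phi$ or $\nabla_{\xi'}\Phi$ is swallowed by one of the surviving denominators, leaving the total size unchanged.
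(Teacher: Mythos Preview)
Your strategy differs from the paper's: the paper \emph{linearises} the phase first, writing $\varPhi(x,y,\xi)=\nabla_\xi\varPhi(x,y,\xi_j^\nu)\cdot\xi+h(x,y,\xi)$, absorbs $e^{ih}$ into the amplitude, and then integrates by parts with the \emph{constant-coefficient} operator $L=(I-2^{2j}\partial_{\xi_1}^2)(I-2^j\Delta_{\xi'})$. This has two advantages: the denominators $(1+|2^j\nabla_{\xi_1}\varPhi(x,y,\xi_j^\nu)|^2)^N$ come out already frozen at $\xi_j^\nu$, and the only thing left to check is $\|L^N(a\,\psi_j\chi_j^\nu e^{ih})\|_{L^\infty}\lesssim 2^{jm}$, which reduces to the phase estimates $|\partial_{\xi_1}^N h|\lesssim 2^{-jN}$ and $|\partial_{\xi'}^{\alpha'}h|\lesssim 2^{-j|\alpha'|/2}$ of \eqref{phaseestim1}--\eqref{phaseestim2}.

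Your variable-coefficient route can also be made to work, but the bookkeeping as written has a genuine gap in the $\xi_1$ direction. When the transpose $L_1^t$ differentiates its own coefficient $\partial_{\xi_1}\varPhi$ (not a denominator, but the numerator), you produce the term $i\,2^{2j}(\partial_{\xi_1}^2\varphi)\,g/(1+\cdots)$, with \emph{no} extra factor of $\partial_{\xi_1}\varPhi$ to be ``swallowed''. The generic $\Phi^2$ bound you invoke gives only $|\partial_{\xi_1}^2\varphi|\lesssim|\xi|^{-1}\sim 2^{-j}$, hence $2^{2j}\cdot 2^{-j}=2^j$, which blows up under iteration. What you actually need is the sharper estimate $|\partial_{\xi_1}^N\varphi|\lesssim 2^{-jN}$ on $\supp(\psi_j\chi_j^\nu)$ for $N\geq 2$; this follows from Euler's relation (e.g.\ $\xi_1\partial_{\xi_1}^2\varphi=-\xi'\cdot\nabla_{\xi'}\partial_{\xi_1}\varphi$ forces $|\partial_{\xi_1}^2\varphi|\lesssim|\xi'|\,|\xi|^{-2}\lesssim 2^{-3j/2}$, and a second application gives $2^{-2j}$) and is precisely the content of \eqref{phaseestim1}. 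The same improved bound is also what you would need in the final step---which you have glossed over---to pass from the denominators $(1+2^{2j}|\partial_{\xi_1}\varPhi(x,y,\xi)|^2)^{-N}$ produced by your $L_1$ to the target denominators evaluated at $\xi=\xi_j^\nu$ in \eqref{eq:claimkernel}. Once you insert \eqref{phaseestim1} at those two places, your argument goes through; without it, it does not.
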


\begin{proof}
It is enough to show the case for $\alpha=0$ since $\alpha$ derivatives in the $y$-variable introduce factors bounded by $2^{j\abs\alpha}$. Define $ h(x,y,\xi):=\varPhi(x,y,\xi)-\nabla_{\xi}\varPhi(x,y,\xi_{j}^{\nu})\cdot\xi$. Then one has
\begin{equation*}
\begin{split}
K_{j}^{\nu}(x,y)=\int_{\mathbb{R}^{n}} e^{i\nabla_{\xi}\varPhi(x,y,\xi_{j}^{\nu})\cdot\xi}\, b_{j}^{\nu}(x,y,\xi)\ddd\xi,
\end{split}
\end{equation*}
where $b_{j}^{\nu}(x,y, \xi):=\psi_j(\xi)\chi_{j}^{\nu}(\xi)e^{ih(x,y,\xi)}$. It can be verified (see e.g. \cite{Stein}*{p. 407}) that the phase
$h(x,y,\xi)$ satisfies
\begin{equation}\label{phaseestim1}
\left \vert \partial^{N}_{\xi_{1}}
h(x,y,\xi) \right \vert\leq C_{N} 2^{-jN}
\end{equation}
\begin{equation}\label{phaseestim2}
 \; \; \big | \partial_{\xi'}^{\alpha'}h(x,y,\xi)  \big | \leq
C_{N} 2^{-j\frac{\abs{\alpha'}}{2}},
\end{equation}
for $N\geq 2$ on the support of $b^{\nu}_{j}(x,y,\xi)$. Introducing the differential operator \\$L:=\left (I-2^{2j}\partial_{\xi_{1}}^{2}\right ) \Big (I-2^{j}\Delta_{\xi '} \Big)$, one has
\begin{equation*}
  \begin{split}
  e^{i\nabla_{\xi}\varPhi(x,y,\xi_{j}^{\nu})\cdot\xi} =\frac{L^{N}e^{i\nabla_{\xi}\varPhi(x,y,\xi_{j}^{\nu})\cdot\xi}}{\left (1+\big|2^{j}\nabla_{\xi_{1}}\varPhi(x,y,\xi_{j}^{\nu})\big|^{2}\right )^{N}  \left (1+\big|2^{\frac j2}\nabla_{\xi'}\varPhi(x,y,\xi_{j}^{\nu})\big|^{2}\right )^{N}} ,
  \end{split}
\end{equation*}
Furthermore for $b_{j}^{\nu}(x, y,\xi)$ using the assumption that $a\in S^{m}(\R^n)$ together with (\ref{eq:quadrseconddyadcond}), (\ref{eq:quadrseconddyadcond2}), and the uniform estimates (in $x$) for $h(x,y,\xi)$ in \eqref{phaseestim1} and (\ref{phaseestim2}), we can show that for any $j\in \N$, $\nu$ and $\xi\in \supp_\xi {b^\nu_j}$
\begin{equation}\label{eq:b-boundedness}
\left \Vert L^N b^{\nu}_{j}(\cdot,\cdot,\xi) \right \Vert_{L^\infty(\R^n\times \R^n)}\leq C_{N} 2^{jm}.
\end{equation}
Now integration by parts yields
\begin{equation*}
  \begin{split}
    \left |K_{j}^{\nu}(x, y) \right | &\lesssim \frac{\displaystyle\int_{\supp_\xi \,b_{j}^{\nu}}\left |L^{N}b_{j}^{\nu}(x,y,\xi)\right |\ddd\xi}{\left (1+\big|2^{j}\nabla_{\xi_{1}}\varPhi(x,y,\xi_{j}^{\nu})\big|^{2}\right )^{N}\left (1+\big|2^{\frac j2}\nabla_{\xi'}\varPhi(x,y,\xi_{j}^{\nu})\big|^{2}\right )^{N}}  \\  &\lesssim  \, \frac{2^{jm}2^{j\frac{n+1}{2}}}{\left (1+\big|2^{j}\nabla_{\xi_{1}}\varPhi(x,y,\xi_{j}^{\nu})\big|^{2}\right )^{N} \left (1+\big|2^{\frac j2}\nabla_{\xi'}\varPhi(x,y,\xi_{j}^{\nu})\big|^{2}\right )^{N}}
  \end{split}
\end{equation*}
where we used (\ref{eq:b-boundedness}) and that $\left |\supp b_j^\nu \right |=O\brkt{2^{j\frac{n+1}{2}}}$. Hence the proof is complete.
\end{proof}
\begin{rem}
The conclusion of \emph{Lemma \ref{lem:kernelestimate}}, is also valid if the phase function $\varphi$ is merely assumed to be in $\mathcal{C}^{\infty}(\R^n \times\R^n \setminus\{0\})$ and positively homogeneous of degree one in $\xi$.
\end{rem}\hspace*{1cm}\\
We now prove the following lemma, which is used for the estimates of the operator $T_\sigma^\varphi$ in the proof of Proposition \ref{Thm:high TL}.
\begin{lem}\label{SSS estimates for Kj}
For $j\geq 1$  and $0<p\leq 1$, let
\m{K_{j}^{\nu}(x, y):=\int_{\mathbb{R}^{n}}\sigma(x,y,\xi)\,\psi_j(\xi)\chi_{j}^{\nu}(\xi)\, e^{i\varphi(x,\xi) -iy\cdot\xi} \ddd\xi }
and
\m{T_j^\nu a(x) = \int_{\R^n}K_j(x,y)a(y)\dd y}
where  $\sigma \in A^{m}(\R^n)$, an $h^p$ atom $a$ supported in the ball $B(\overline{y}, r)$ and $\varphi\in \Phi^2 $ satisfies the \emph{SND} condition \eqref{SND}. Moreover let ${B^*}$ be as in \emph{Definition \ref{def:influenceset}}. Then for $x\in B^{*c}$ and $2^j<r^{-1}$ one has for any $N>0$
\nm{eq:operatorestimate1}{\abs{T_j^\nu a(x)}\lesssim {  \frac{ 2^{j\brkt{m+\frac{n+1}2}}2^{j M_1}r^{ M_1}r^{n-\frac np}   }{\brkt{1+2^{2j}\abs{\nabla_{\xi_1}\varphi(x,\xi_j^\nu)-\bar y_1}^2}^{N}\brkt{1+2^{j}\abs{\nabla_{\xi'}\varphi(x,\xi_j^\nu)-\bar y'}^2}^{N}}}, } where $\displaystyle M_1$ is any positive integer larger than $\left[ n\brkt{\frac 1p -1}_+ \right ].$ Moreover for $x\in B^{*c}$ and $2^j\geq r^{-1}$ one has
\nm{eq:operatorestimate2}{ \abs{T_j^\nu a(x)}\lesssim {  \frac{2^{j\brkt{m+\frac {n+1}2}}  2^{-jM_2 }r^{-M_2}r^{n-\frac np} 2^{4jN}r^{4N}}{\brkt{1+2^{2j}\abs{\nabla_{\xi_1}\varphi(x,\xi_j^\nu)-\bar y_1}^2}^{N}\brkt{1+2^{j}\abs{\nabla_{\xi'}\varphi(x,\xi_j^\nu)-\bar y'}^2}^{N}}} ,} 
where $M_2>0$ is any positive integer.
\end{lem}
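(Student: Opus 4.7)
The strategy is to split into two regimes, $2^j<r^{-1}$ and $2^j\geq r^{-1}$, and in each case start from the pointwise kernel bound of Lemma \ref{lem:kernelestimate} applied to $\varPhi(x,y,\xi)=\varphi(x,\xi)-y\cdot\xi$, so that $\nabla_\xi\varPhi(x,y,\xi_j^\nu)=\nabla_\xi\varphi(x,\xi_j^\nu)-y$. Since the target estimates are phrased in terms of the denominators at the centre $\bar y$, in both cases a secondary task will be to compare the $y$-dependent denominators produced by Lemma \ref{lem:kernelestimate} with those appearing in \eqref{eq:operatorestimate1}--\eqref{eq:operatorestimate2}.

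For the low-frequency case $2^j<r^{-1}$, since $j\geq 1$ forces $r<1/2\leq 1$, the $h^p$-atom $a$ has its full moment conditions available. For an $M_1$ not exceeding the moment order of $a$, I would Taylor expand $K_j^\nu(x,\cdot)$ around $\bar y$ to order $M_1-1$ with integral remainder; the polynomial part is annihilated after integration against $a$ (since each $(y-\bar y)^\alpha$ is a polynomial in $y$ of the same degree), while the remainder involves $\partial_y^\alpha K_j^\nu$ with $|\alpha|=M_1$. By Lemma \ref{lem:kernelestimate} this is bounded by $2^{j(m+(n+1)/2+M_1)}$ divided by a product of denominators evaluated at some point of $B(\bar y,r)$. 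Because $2^j|y_1-\bar y_1|<2^j r<1$ and $2^{j/2}|y'-\bar y'|<1$, those denominators are comparable to the same quantities evaluated at $\bar y$. Multiplying by $\|(y-\bar y)^\alpha\|_{L^\infty(B)}\leq r^{M_1}$, by $\|a\|_{L^\infty}\leq r^{-n/p}$, and by $|B|\approx r^n$ then yields \eqref{eq:operatorestimate1}.

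For the high-frequency case $2^j\geq r^{-1}$, choose the integer $k$ with $2^{-k}\leq r<2^{-k+1}$, so $j\geq k$. I would apply Lemma \ref{lem:kernelestimate} with $N$ replaced there by $N+M_2$. Since $x\in B^{*c}$ and $j\geq k$, Remark \ref{rem:influenceset}(ii) supplies, for every $y\in B(\bar y,r)$,
\[
|2^{j}\nabla_{\xi_{1}}\varPhi(x,y,\xi_{j}^{\nu})|+|2^{j/2}\nabla_{\xi'}\varPhi(x,y,\xi_{j}^{\nu})|\gtrsim 2^{(j-k)/2}\approx (2^{j}r)^{1/2}.
\]
Combining this with the elementary inequality $(1+A^{2})(1+B^{2})\geq 1+\tfrac12(|A|+|B|)^{2}$, the $M_{2}$-portion of the denominator is bounded below by a constant times $(2^{j}r)^{M_{2}}$, which supplies the desired factor $2^{-jM_{2}}r^{-M_{2}}$. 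The remaining $N$-portion is transported from $y$ to $\bar y$ by the estimate $1+|A|^{2}\leq 2(1+|B|^{2})(1+|A-B|^{2})$ applied in each coordinate: since $|y-\bar y|\leq r$ and $2^{j}r\geq 1$, the resulting loss is at most $(1+2^{2j}r^{2})^{N}(1+2^{j}r^{2})^{N}\lesssim 2^{4jN}r^{4N}$. Integrating the size bound $|a|\leq r^{-n/p}$ over $B(\bar y,r)$ contributes the factor $r^{n-n/p}$, and \eqref{eq:operatorestimate2} follows.

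The main obstacle will be the high-frequency case, specifically the coordination between the geometric lower bound of Remark \ref{rem:influenceset}(ii) (available only because $x\in B^{*c}$ and $j\geq k$) and the algebraic splitting of the $(N+M_{2})$-th power of the denominator into an $M_{2}$-part used for the geometric gain and an $N$-part retained for the later $L^{p}(x)$ summation. The transfer from $y$ to $\bar y$ in the $N$-part is the source of the slightly wasteful loss $2^{4jN}r^{4N}$ recorded in the statement.
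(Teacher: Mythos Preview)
Your proposal is correct and follows essentially the same route as the paper's proof: in the low-frequency regime you subtract a Taylor polynomial of $K_j^\nu(x,\cdot)$ at $\bar y$, use the moment conditions of $a$, apply the $\partial_y^\alpha$-estimate from Lemma~\ref{lem:kernelestimate}, and then use a Peetre-type comparison (harmless here since $2^jr<1$) to recentre the denominators at $\bar y$; in the high-frequency regime you invoke Lemma~\ref{lem:kernelestimate} with exponent $N+M_2$, spend the $M_2$-portion on the geometric lower bound from Remark~\ref{rem:influenceset}(ii), and use Peetre's inequality on the remaining $N$-portion to shift to $\bar y$, incurring exactly the loss $2^{4jN}r^{4N}$. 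The only cosmetic difference is that the paper uses the Lagrange form of the remainder (with an intermediate point $\tilde y\in B$) rather than the integral remainder, and writes the Peetre step explicitly rather than via your auxiliary inequalities.
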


\begin{proof}
We start with \eqref{eq:operatorestimate1}, when $r< 2^{-j}$. Let $p_{j}^{\nu}\brkt{x,y-z}$ be the Taylor polynomial of $K_{j}^{\nu}(x,y)$ of order $M-1$ centred at $y=z$. Then, using the moment conditions of the atom $a$, the result in Lemma \ref{lem:kernelestimate} and Peetre's inequality, we have
\m{
\abs{T_j^\nu a(x)}  &=\abs{\int_{B}(K_{j}^\nu(x,y)-p_{j}^\nu (x,\ y-\overline{y}))a(y)\dd y} \\ &\lesssim  \sum_{ \abs\alpha =M}{\int_{B}\abs{\partial ^\alpha _y K_{j}^\nu(x,\tilde y)}\abs{y-\overline{y}}^{\abs\alpha}\abs{a(y)}\dd y} \\
&\lesssim   {  \int_B\frac{ 2^{j\brkt{m+\frac{n+1}2}}2^{j M}r^{ M}r^{-\frac np}   }{\brkt{ 1+2^{2j}\abs{\nabla_{\xi_1}\varphi(x,\xi_j^\nu)-\tilde y_1}^2}^{N}\brkt{1+2^{j}\abs{\nabla_{\xi'}\varphi(x,\xi_j^\nu)-\tilde y'}^2}^{N}}\dd y}\\
&\lesssim   {  \int_B\frac{ 2^{j(m+\frac{n+1}2)}2^{j M}r^{ M}r^{-\frac np} \brkt{1+ 2^{2j}|\tilde y_1 -\overline{y}_1|^2}^{N}\brkt{1+ 2^{j}|\tilde y' -\overline{y}'|^2}^{N}  }{\brkt{1+2^{2j}\abs{\nabla_{\xi_1}\varphi(x,\xi_j^\nu)-\bar y_1}^2}^{N}\brkt{1+2^{j}\abs{\nabla_{\xi'}\varphi(x,\xi_j^\nu)-\bar y'}^2}^{N}}\dd y} \\& \lesssim {  \int_B\frac{ 2^{j\brkt{m+\frac{n+1}2}}2^{j M}r^{ M}r^{-\frac np} \brkt{1+ 2^{2j}r^2}^{2N}  }{\brkt{1+2^{2j}\abs{\nabla_{\xi_1}\varphi(x,\xi_j^\nu)-\bar y_1}^2}^{N}\brkt{1+2^{j}\abs{\nabla_{\xi'}\varphi(x,\xi_j^\nu)-\bar y'}^2}^{N}}\dd y} \\& \lesssim {  \frac{ 2^{j\brkt{m+\frac{n+1}2}}2^{j M}r^{ M}r^{n-\frac np}   }{\brkt{1+2^{2j}\abs{\nabla_{\xi_1}\varphi(x,\xi_j^\nu)-\bar y_1}^2}^{N}\brkt{1+2^{j}\abs{\nabla_{\xi'}\varphi(x,\xi_j^\nu)-\bar y'}^2}^{N}}},
}
where we have taken $\tilde y$ to be on the line segment between $y$ and $\bar y$, and observe that $\tilde y\in B$.\\

Now we prove \eqref{eq:operatorestimate2}, when $r\geq 2^{-j}.$ Suppose that $k$ is the integer such that \linebreak$2^{-k}\leq r < 2^{-k+1}$. 
By the result in Lemma \ref{lem:kernelestimate}, the observation in Remark \ref{rem:influenceset}(ii) and Peetre's inequality, one has
\m{
|T_{j}a(x)| &\lesssim  {  \int_B\abs{K_j^\nu(x,y)a(y) }\dd y} \\ &\lesssim    {  \int_B\frac{2^{j\brkt{m+\frac {n+1}2}} r^{-\frac np}}{\brkt{1+2^{2j}\abs{\nabla_{\xi_1}\varphi(x,\xi_j^\nu)-y_1}^2}^{N+M}\brkt{1+2^j\abs{\nabla_{\xi'}\varphi(x,\xi_j^\nu)-y'}^2}^{N+M}}\dd y} \\&\lesssim  {  \int_B\frac{2^{j\brkt{m+\frac {n+1}2}} 2^{M(k-j) }r^{-\frac np} 
\brkt{1+ 2^{2j}|y_1 -\overline{y}_1|^2}^{N}\brkt{1+ 2^{j}|y' -\overline{y}'|^2}^{N}
}{\brkt{1+2^{2j}\abs{\nabla_{\xi_1}\varphi(x,\xi_j^\nu)-\bar y_1}^2}^{N}\brkt{1+2^{j}\abs{\nabla_{\xi'}\varphi(x,\xi_j^\nu)-\bar y'}^2}^{N}}\dd y} \\&\lesssim  {  \int_B\frac{2^{j\brkt{m+\frac {n+1}2}} 2^{M(k-j) }r^{-\frac np} 
\brkt{1+ 2^{2j}r^2}^{2N}
}{\brkt{1+2^{2j}\abs{\nabla_{\xi_1}\varphi(x,\xi_j^\nu)-\bar y_1}^2}^{N}\brkt{1+2^{j}\abs{\nabla_{\xi'}\varphi(x,\xi_j^\nu)-\bar y'}^2}^{N}}\dd y} \\&\lesssim  {  \frac{2^{j\brkt{m+\frac {n+1}2}} 2^{-jM} r^{-kM} r^{n-\frac np} 
2^{4jN}r^{4N}
}{\brkt{1+2^{2j}\abs{\nabla_{\xi_1}\varphi(x,\xi_j^\nu)-\bar y_1}^2}^{N}\brkt{1+2^{j}\abs{\nabla_{\xi'}\varphi(x,\xi_j^\nu)-\bar y'}^2}^{N}}}
}
which concludes the lemma.
\end{proof}\hspace*{1cm}

We also prove a lemma that is used for the estimates of $\brkt{T_\sigma^\varphi}^*$ in the proof of Proposition \ref{Thm:high TL}.
\begin{lem}\label{SSS estimates for Kj2}
For $j\geq 1$ and $r<1$ let
\m{K_{j}^{\nu}(x, y):=\int_{\mathbb{R}^{n}}\sigma(y,\xi)\,\psi_j(\xi)\chi_{j}^{\nu}(\xi)\, e^{i(x\cdot\xi -\varphi(y,\xi))} \ddd\xi }
and
\m{
K_j(x,y) = \sum_\nu K_j^\nu (x,y)
}

where  $\sigma \in A^{m_c (1)}(\R^n)$ and $\varphi\in \Phi^2 $ satisfies the $\mathrm{SND}$ condition \eqref{SND}. Then for $y\in B(\bar y,r)$ one has 
\begin{enumerate}
\item[\emph{(i)}]  $\displaystyle \int_{\mathbb{R}^{n}}|K_{j}(x,y)-K_j(x,\bar y)|\dd x\leq A\, 2^{j}\, r,$ \\
\item[\emph{(ii)}]  $\displaystyle \int_{B^{* c}} |K_{j}(x,y)|\dd x\leq A\, 2^{-j}r^{-1},$ for $ 2^{j}\geq r^{-1},$
where $B^\ast$ is as\smallskip\ in \linebreak \emph{Definition \ref{def:influenceset}}.
\end{enumerate}
In all the estimates above, the constant $A$ is independent of $y'$, $\overline{y}$, $j$ and $r$.\\

\end{lem}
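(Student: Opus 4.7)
The plan is to use the pointwise kernel bound from Lemma \ref{lem:kernelestimate} in each dyadic cone, integrate in $x$, and sum over $\nu$ (with $O(2^{j(n-1)/2})$ terms). Throughout, note that with $m=m_c(1)=-(n-1)/2$, the prefactor in \eqref{eq:claimkernel} becomes $2^{j(1+|\alpha|)}$. Also, the change of variables $u=2^{j}(x_1-\partial_{\xi_1}\varphi(y,\xi_j^\nu))$, $v=2^{j/2}(x'-\nabla_{\xi'}\varphi(y,\xi_j^\nu))$, legitimate by the SND condition \eqref{SND}, has Jacobian $2^{-j(n+1)/2}$, so
\[
\int_{\R^n}\frac{\dd x}{(1+|2^{j}\nabla_{\xi_1}\varPhi(x,y,\xi_j^\nu)|^{2})^{L}(1+|2^{j/2}\nabla_{\xi'}\varPhi(x,y,\xi_j^\nu)|^{2})^{L}}\lesssim 2^{-j(n+1)/2},
\]
uniformly in $y$, whenever $L$ is large enough.

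For (i), I would use the fundamental theorem of calculus
\[
K_j(x,y)-K_j(x,\overline y)=\int_0^1(y-\overline y)\cdot\nabla_y K_j\big(x,\overline y+t(y-\overline y)\big)\dd t,
\]
and estimate $|y-\overline y|\leq r$. Applying Lemma \ref{lem:kernelestimate} with $|\alpha|=1$ gives $|\nabla_y K_j^\nu(x,y)|\lesssim 2^{2j}$ divided by the anisotropic denominator. Integrating in $x$ and summing over $\nu$ yields
\[
\int_{\R^n}|\nabla_y K_j(x,y)|\dd x\lesssim 2^{j(n-1)/2}\cdot 2^{2j}\cdot 2^{-j(n+1)/2}=2^{j},
\]
uniformly in $y\in B$, which after Fubini and multiplication by $r$ gives the claim.

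For (ii), the key input is Remark \ref{rem:influenceset}(ii): if $2^{-k}\leq r<2^{-k+1}$ and $x\in B^{*c}$, $y\in B$, then
\[
|2^{j}\nabla_{\xi_1}\varPhi(x,y,\xi_j^\nu)|+|2^{j/2}\nabla_{\xi'}\varPhi(x,y,\xi_j^\nu)|\gtrsim 2^{(j-k)/2},
\]
so at least one of the two denominator factors in \eqref{eq:claimkernel} is $\gtrsim 2^{j-k}$. I would split $N=M+L$ with $M=1$ and $L$ large, bound the $M$-part by $2^{-(j-k)}$ and use the $L$-part for integrability. Applying Lemma \ref{lem:kernelestimate} with $|\alpha|=0$, then integrating in $x$ and summing over $\nu$ yields
\[
\int_{B^{*c}}|K_j(x,y)|\dd x\lesssim 2^{j(n-1)/2}\cdot 2^{j}\cdot 2^{-(j-k)}\cdot 2^{-j(n+1)/2}=2^{-(j-k)}\lesssim 2^{-j}r^{-1},
\]
using $2^{-k}\approx r$ in the last step.

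The main technical obstacle is (ii): one has to correctly exploit the non-degeneracy coming from $B^{*c}$ in such a way that, after paying the combinatorial cost $2^{j(n-1)/2}$ of summing over the second-dyadic cones, the surviving decay is exactly $2^{-j}r^{-1}$ and not weaker. The arithmetic of powers $(n-1)/2+1-(n+1)/2=0$ is what makes the cancellation work; the role of the hypothesis $\sigma\in L^\infty A^{m_c(1)}$ is precisely to tune the prefactor $2^{j(m+(n+1)/2)}$ to the value $2^{j}$, which is then absorbed by the $B^{*c}$-gain.
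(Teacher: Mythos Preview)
Your proof is correct and follows essentially the same approach as the paper: for (i) you use the mean value theorem together with the pointwise kernel bound of Lemma~\ref{lem:kernelestimate} with $|\alpha|=1$, integrate in $x$, and sum over $\nu$; for (ii) you use the extra decay from Remark~\ref{rem:influenceset}(ii), split the exponent to retain integrability while extracting the $2^{k-j}$ gain, and again sum over $\nu$. One minor remark: in this particular lemma the phase is $\varPhi(x,y,\xi)=x\cdot\xi-\varphi(y,\xi)$, so the change of variables in $x$ is simply an affine shift and rescaling---the SND condition is not actually needed for this step (it is needed elsewhere, e.g.\ in Lemma~\ref{SSS estimates for Kj}).
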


\begin{proof}
To prove (i), we use mean value theorem and Lemma \ref{lem:kernelestimate}
\m{
&\int_{\mathbb{R}^{n}}|K_{j}^{\nu}(x,y)-K_j^\nu(x,\bar y)|  \dd x \\ &\leq \int_{\mathbb{R}^{n}}|(y-\bar y)\cdot \nabla_y K_{j}^{\nu}(x,\tilde y)|  \dd x \\ &\lesssim \int_{\mathbb{R}^{n}}\frac{r2^{j}2^{j\brkt{m_c(1)+\frac {n+1}2}}}{\brkt{1+2^{2j}\big|(x_1-\nabla_{\xi_1}\varphi(\tilde y,\xi_j^\nu))\big|^{2}+2^{j}\big|(x'-\nabla_{\xi'}\varphi(\tilde y,\xi_j^\nu))\big|^{2}}^{N}} \dd x \\ &\lesssim
\int_{\mathbb{R}^{n}}\frac{r2^{j}2^{j\brkt{-\frac{n-1}2+\frac {n+1}2}}2^{-j\frac{n+1}2}}{\brkt{1+|x|^{2}}^{N}}\dd x\lesssim  2^{-j\frac{n-1}2}\,2^{j}\,r,
}
where we have taken $\tilde y$ to be on the line segment between $y$ and $\bar y$ and $N$ large enough. Therefore summing in $\nu$ and remembering that there are $O\brkt{2^{j\frac{n-1}2} }$ terms involved, we obtain
$$
\int_{\mathbb{R}^{n}}|K_{j}(x,y)-K_j(x,\bar y)|  \dd x \lesssim \sum_{\nu}2^{-j\frac{n-1}2}\,2^j\,r\lesssim 2^j\,r,
$$
for all $y\in B$; this proves (i) in Lemma \ref{SSS estimates for Kj}.\\

Now to prove (ii) we once again use Lemma \ref{lem:kernelestimate} and the observation in Remark \ref{rem:influenceset} (ii);
\m{
    &\int_{B^{*c}}\abs{K_{j}^{\nu}(x,y)} \dd x\\ \lesssim &\int_{B^{*c}}\frac{2^{j\brkt{m_c(1)+\frac{n+1}2}}}{\brkt{1+2^{2j}\big|(x_1-\nabla_{\xi_1}\varphi(y,\xi_j^\nu))\big|^{2}+2^{j}\big|(x'-\nabla_{\xi'}\varphi(y,\xi_j^\nu))\big|^{2}}^{N+1}}\,\dd x\\ \lesssim &\int_{\R^n}\frac{2^{j\brkt{m_c(1)+\frac{n+1}2}}\, 2^{-j\frac{n+1}2}\, 2^{k-j}}{\brkt{1+|x|^{2}}^{N}}\,\dd x\lesssim \, 2^{-j\frac{n-1}2}\, 2^{-j}\, r^{-1}.
}
 Therefore, summing once again in $\nu$ yields (iii).
\end{proof}\hspace*{1cm}

\section{Ruzhansky-Sugimoto's globalisation technique}\label{RS globalisation}
In \cite{Ruzhansky-Sugimoto}, M. Ruzhansky and M. Sugimoto developed a new technique to transfer local boundedness of Fourier integral operators, which was proven by A. Seeger, C. Sogge and E. Stein \cite{SSS}, to a global result, where the amplitudes of the corresponding operators do not have compact spatial supports. In order to prove global regularity results we follow \cite{Ruzhansky-Sugimoto} and define
$$H(x,y,z)=\inf_{\xi\in\R^n}\abs{z+\nabla_\xi\vartheta(x,y,\xi)}
$$
where for us $\vartheta(x,y,\xi)$ is either $\theta(x,\xi)$ or $-\theta(y,\xi)$, with $\theta\in \Phi^1$ and
\[
\Delta_r:=\{(x,y,z)\in\R^n\times\R^n\times\R^n:H(x,y,z)\geq r\}.
\]
One also defines
\begin{align*}
&\widetilde H(z):=\inf_{x,y\in\R^n}H(x,y,z)=
\inf_{x,y,\xi\in\R^n}\abs{z+\nabla_\xi\vartheta(x,y,\xi)},
\\
&\widetilde\Delta_r:=\set{z\in\R^n:\widetilde H(z)\geq r}.
\end{align*}
and
\begin{align*}
&M_K :=\sum_{|\gamma|\leq K}\sup_{x,y,\xi\in\R^n}
\abs{\langle\xi\rangle^{-(m_{c}-|\gamma|)}\,\partial^\gamma_\xi \sigma(x,y,\xi)},
\\
&N_K :=\sum_{1\leq |\gamma|\leq K}\sup_{x,y,\xi\in\R^n}
\abs{\langle\xi\rangle^{-(1-|\gamma|)}\,\partial^\gamma_\xi \vartheta(x,y,\xi)}.
\end{align*}
We observe that $N_K <\infty$ by the $\Phi^1$ condition on the phase function. Given these definitions one has the following lemma:
\begin{lem}\label{Lem:outside}
Let $r\geq1$ and $K\geq 1$.
Then we have
$\R^n \setminus \widetilde{\Delta}_{2r} \subset \{z;\, |z|<(2+N_K )r \}$. Furthermore for $r>0$, $x\in\widetilde{\Delta}_{2r}$ and $|y|\leq r$
we have
\begin{equation}\label{H bounded by H}
 \widetilde{H}(x)\leq 2H(x,y, x-y)
\end{equation}
and therefore $(x,y,x-y)\in\Delta_r$
\end{lem}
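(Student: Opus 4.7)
The plan is to unpack the definitions and apply the triangle inequality, using only that the $\Phi^1$ condition gives the pointwise bound $|\nabla_\xi\vartheta(x,y,\xi)|\le N_K$ (this is precisely the $|\gamma|=1$ contribution to $N_K$, since the factor $\langle\xi\rangle^{-(1-|\gamma|)}$ reduces to $1$).

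For the inclusion $\R^n\setminus\widetilde\Delta_{2r}\subset\{|z|<(2+N_K)r\}$, I would argue by contrapositive: fix $z$ with $|z|\ge(2+N_K)r$. For arbitrary $(x,y,\xi)$, the reverse triangle inequality combined with the bound on $\nabla_\xi\vartheta$ gives
\[
|z+\nabla_\xi\vartheta(x,y,\xi)|\ge |z|-|\nabla_\xi\vartheta(x,y,\xi)|\ge (2+N_K)r-N_K\ge 2r,
\]
where the last step uses $r\ge 1$. Taking the infimum over all $(x,y,\xi)$ yields $\widetilde H(z)\ge 2r$, that is $z\in\widetilde\Delta_{2r}$.

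For the estimate $\widetilde H(x)\le 2H(x,y,x-y)$, the conceptual point is that $\widetilde H$ is a two-stage infimum: first over $\xi$, then over $(x',y')$. In particular, for any admissible choice of $(x',y')$, including the fixed $(x,y)$ appearing in the statement, one has $\widetilde H(z)\le \inf_\xi|z+\nabla_\xi\vartheta(x,y,\xi)|$. I would apply this to $z=x$ together with the triangle inequality
\[
|x+\nabla_\xi\vartheta(x,y,\xi)|\le |x-y+\nabla_\xi\vartheta(x,y,\xi)|+|y|,
\]
take infimum in $\xi$, and use $|y|\le r$ to obtain $\widetilde H(x)\le H(x,y,x-y)+r$. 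Since $x\in\widetilde\Delta_{2r}$ forces $r\le \widetilde H(x)/2$, moving this term to the left side produces $\widetilde H(x)\le 2H(x,y,x-y)$. The conclusion $(x,y,x-y)\in\Delta_r$ is then immediate from $H(x,y,x-y)\ge \widetilde H(x)/2\ge r$.

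There is no real obstacle here: the whole argument is definition-chasing plus one application of the triangle inequality each. The only point worth emphasising is the role of the parameter $r\ge 1$ in the first part (which allows $N_K\le N_K r$) and the role of the double infimum in $\widetilde H$ in the second part (which is what permits comparing it to a quantity with frozen $(x,y)$).
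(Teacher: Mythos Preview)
Your proof is correct and essentially the same as the paper's: both parts amount to the triangle inequality combined with the bound $|\nabla_\xi\vartheta|\le N_K$ and the defining inequality $\widetilde H(x)\ge 2r$. The only cosmetic difference is that for the inclusion you argue by contrapositive, whereas the paper starts from $z\notin\widetilde\Delta_{2r}$, picks a near-minimiser $(x_0,y_0,\xi_0)$ with $|z+\nabla_\xi\vartheta(x_0,y_0,\xi_0)|<2r$, and bounds $|z|$ directly; the content is identical.
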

\begin{proof}
For $z\in\R^n\setminus\widetilde{\Delta}_{2r}$,
we have
$\widetilde{H}(z)
<2r$.
Hence, there exist $x_0,y_0,\xi_0\in\R^n$ such that
$$
|z+\nabla_\xi\vartheta(x_0,y_0,\xi_0)|<2r.
$$
Since, $r\geq 1$, this yields that
\[
|z|\leq|z +\nabla_\xi\vartheta(x_0,y_0,\xi_0)|+|\nabla_\xi\vartheta(x_0,y_0,\xi_0)|
\leq 2r+N_K \leq (2+N_K )r.
\]

The claim that $(x,y,x-y)\in\Delta_r$ follows from \eqref{H bounded by H} and the definition of $ \Delta_r$. Therefore it only remains to prove \eqref{H bounded by H}. Now, if $|y|\leq r$ and $x\in \widetilde{\Delta}_{2r}$ then since $  \widetilde{H}(x)\geq 2r$, we have that
\begin{align*}
 \widetilde{H}(x)
&\leq |x+\nabla\vartheta(x,y,\xi)|
\leq|x-y+\nabla\vartheta(x,y,\xi)|+|y|
\\
&\leq|x-y+\nabla\vartheta(x,y,\xi)|+  \frac{\widetilde{H}(x)}2.
\end{align*}

From this, \eqref{H bounded by H} follows at once.
\end{proof}
For proving the global boundedness that we aim to demonstrate, the following result is of particular importance.
\begin{lem}\label{Lem:kernel RuzhSug}
The kernel $$K(x,y,z)=\int_{\R^n}e^{iz\cdot\xi+i\vartheta(x,y,\xi)}\, \sigma (x,y,\xi)\,\ddd\xi$$ is smooth on $\displaystyle \bigcup_{r>0}\Delta_r$.
Moreover, for all $L>n$ and $r\geq 1$ it satisfies
\begin{equation}\label{boundedness of HLK}
\norm{H^{L} K}_{L^\infty(\Delta_r)}\leq C( L,M_L ,N_{L+1}),
\end{equation}
where $C( L, M_L ,N_{L+1})$ is a positive constant depending
only on $L$, $M_L$ and $N_{L+1}$.
For $0< p \leq 1$, $\displaystyle L>\frac np$ and $r\geq 1$, the function $\widetilde{H}(z)$ satisfies the bound

\begin{equation}\label{integrable HL}
\norm{ \widetilde{H}^{-L}}_{L^p\brkt{\widetilde{\Delta}_r}}
\leq C(L,N_{L+1}, p).
\end{equation}
\end{lem}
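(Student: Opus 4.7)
My plan is to handle the two assertions separately. The first rests on a standard oscillatory integration by parts in $\xi$; the second follows from Lemma \ref{Lem:outside} together with a direct linear lower bound $\widetilde H(z)\geq |z|-N_1$.

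For the kernel estimate, fix $(x,y,z)\in\Delta_r$ with $r\geq 1$ and let $\Phi(\xi):=z\cdot\xi+\vartheta(x,y,\xi)$. Since $|\nabla_\xi\Phi(\xi)|=|z+\nabla_\xi\vartheta(x,y,\xi)|\geq H(x,y,z)\geq r\geq 1$ for every $\xi\in\R^n$, the first order operator $\mathcal{L}:=(i|\nabla_\xi\Phi|^2)^{-1}\nabla_\xi\Phi\cdot\nabla_\xi$ satisfies $\mathcal{L}(e^{i\Phi})=e^{i\Phi}$. Iterating $L$ times and transposing,
\[
K(x,y,z)=\int_{\R^n}e^{i\Phi(\xi)}({}^t\mathcal{L})^L\sigma(x,y,\xi)\ddd\xi.
\]
The Leibniz rule expresses $({}^t\mathcal{L})^L\sigma$ as a finite sum of products of $L$ factors of $W:=\nabla_\xi\Phi/|\nabla_\xi\Phi|^2$, some of them carrying $\xi$-derivatives, multiplied by a $\xi$-derivative of $\sigma$, with total $\xi$-derivative order equal to $L$. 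A routine induction using the $\Phi^1$ hypothesis on $\vartheta$ gives $|\partial^\alpha_\xi W|\lesssim\langle\xi\rangle^{-|\alpha|}|\nabla_\xi\Phi|^{-|\alpha|-1}$ with constants controlled by $N_{|\alpha|+1}$. Combining with the symbol estimates on $\sigma$ (controlled by $M_L$) and the pointwise inequality $|\nabla_\xi\Phi|\geq H\geq 1$, one obtains
\[
\bigl|({}^t\mathcal{L})^L\sigma(x,y,\xi)\bigr|
\lesssim
C(M_L,N_{L+1})\,\langle\xi\rangle^{m_c-L}\,H(x,y,z)^{-L}.
\]
Because $m_c\leq 0$ and $L>n$, the integral $\int_{\R^n}\langle\xi\rangle^{m_c-L}\ddd\xi$ is finite, which yields \eqref{boundedness of HLK}. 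Smoothness of $K$ on $\bigcup_{r>0}\Delta_r$ follows from the same procedure together with differentiation under the integral in $x,y,z$: each additional derivative can be absorbed by further integration by parts, keeping the resulting integrals absolutely convergent on every $\Delta_r$.

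For the integrability claim, observe that
\[
\widetilde H(z)\geq |z|-\sup_{x,y,\xi}|\nabla_\xi\vartheta(x,y,\xi)|\geq |z|-N_1,
\]
so $\widetilde H(z)\geq |z|/2$ whenever $|z|\geq 2N_1$. Setting $R:=\max(2N_1,(2+N_1)r)$, I split
\[
\|\widetilde H^{-L}\|_{L^p(\widetilde\Delta_r)}^p=\int_{\widetilde\Delta_r\cap B(0,R)}\!\widetilde H^{-Lp}\dd z+\int_{\widetilde\Delta_r\setminus B(0,R)}\!\widetilde H^{-Lp}\dd z.
\]
On the inner region $\widetilde H\geq r$ yields the bound $r^{-Lp}|B(0,R)|\lesssim r^{n-Lp}+1$, which is uniformly bounded in $r\geq 1$ since $Lp>n$; on the outer region $\widetilde H^{-Lp}\lesssim |z|^{-Lp}$ integrates to a constant times $R^{n-Lp}$, again bounded because $Lp>n$. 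This proves \eqref{integrable HL} with a constant depending only on $L$, $N_{L+1}$ and $p$.

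The most delicate step is the Leibniz bookkeeping yielding the pointwise estimate on $({}^t\mathcal{L})^L\sigma$ with the precise dependence on $M_L$ and $N_{L+1}$ (and, in particular, no appearance of higher order semi-norms of $\vartheta$ than $N_{L+1}$); the rest amounts to Fubini and elementary volume computations.
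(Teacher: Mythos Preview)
Your proof is correct and follows essentially the same route as the paper: the same first-order operator $\mathcal{L}$ (the paper calls it $D$) is used for the $L$-fold integration by parts to obtain \eqref{boundedness of HLK}, and the same linear lower bound $\widetilde H(z)\geq |z|-N_1$ (the paper writes it with $N_{L+1}$, which is larger but immaterial) drives the $L^p$ estimate \eqref{integrable HL}. Your Leibniz bookkeeping for $({}^t\mathcal{L})^L\sigma$ is more explicit than the paper's, and your splitting radius $R=\max(2N_1,(2+N_1)r)$ is slightly more elaborate than needed --- the paper simply splits at $|z|=2N_{L+1}$, and in fact $R=2N_1$ would already do --- but this does not affect the argument.
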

\begin{proof}
If one introduces the differential operator
$$
D=\frac{(z+\nabla_\xi\vartheta)\cdot\nabla_{\xi}}{i|z+\nabla_\xi\vartheta|^2},
$$
with the transpose $D^*$, then integration by parts $L$ times yields
$$
K(x,y,z)=\int_{\R^n}e^{iz\cdot\xi+i\vartheta(x,y,\xi)}
\left(D^*\right)^{L} \sigma (x,y,\xi)\,\ddd\xi.
$$

Now \eqref{boundedness of HLK} follows from the relation $r\leq H(x,y,z)\leq|z+\nabla_\xi\vartheta(x,y,\xi)|$ which \smallskip is valid for $(x,y,z)\in\Delta_r$ and $\xi\in\R^n$.
Moreover $|z|\leq |z+\nabla_\xi\vartheta(x,y,\xi)|+N_{L+1}$ for any $\xi\not=0$, which yields that $|z|\leq  \widetilde{H}(z)+N_{L+1}$. Hence for $|z|\geq 2N_{L+1}$ one has $\displaystyle |z|\leq \widetilde{H}(z)+\frac{|z|}2$, and therefore
$|z|\leq 2 \widetilde{H}(z)$. Using this we get
\begin{align*}
\norm{ \widetilde{H}^{-L}}_{L^p\brkt{\widetilde{\Delta}_r}}
&\leq
\norm{ \widetilde{H}^{-L}}_{L^p\brkt{\widetilde{\Delta}_r\cap\{|z|\leq 2N_{L+1}\}}}+
\norm{ \widetilde{H}^{-L}}_{L^p\brkt{\widetilde{\Delta}_r\cap\{|z|\geq 2N_{L+1}\}}}
\\
&\leq
r^{-L}\brkt{\int_{|z|\leq 2N_{L+1}}  \dd z}^{\frac 1p} +
2^{L}\brkt{\int_{|z|\geq 2N_{L+1}}|z|^{-pL}\, \dd z} ^{\frac 1p} \\ & \leq C( L, N_{L+1},p),
\end{align*}
which proves \eqref{integrable HL}.
\end{proof}\hspace*{1cm}

Now in the proof of global boundedness of FIOs that are treated in this paper, we shall use Lemma \ref{phase reduction lem} to bring the operators in question to the form
\m{
   \iint_{\R^n\times\R^n} a(x,\xi)\, e^{i\theta(x,\xi)+i(t(x)-y)\cdot\xi}\, u(y)  \ddd\xi \dd y,
}
or
\m{
   \iint_{\R^n\times\R^n}  a(y,\xi)\, e^{i\theta(y,\xi)+i(t(y)-x)\cdot\xi}\, u(y)  \ddd\xi \dd y,
}
where $\theta \in \Phi^{1},$ and $t(\cdot)$ is an appropriate global diffeomorphism.
Therefore a change of variables and using the invariance of Besov-Lipschitz and Triebel-Lizorkin spaces under suitable diffeomorphisms, will enable us to replace $t(x)$ and $t(y)$ by $x$ and $y$ respectively and utilise  the estimates discussed above, to obtain global boundedness results in various settings.

\section{Boundedness of FIOs on Besov-Lipschitz spaces}\label{boundedness in Besov}

In this section we establish the boundedness of FIO's of all possible scales for Besov-Lipschitz spaces $B^s_{p,q}(\R^n)$ for $-\infty <s<\infty,$ $0<p\leq \infty$ and $0<q\leq \infty.$  The local boundedness results are for amplitudes $a(x,\xi)\in S^m(\R^n)$ and phase functions $\varphi(x,\xi)$ that are positively homogeneous of degree 1 in $\xi$ and satisfy the usual non-degeneracy condition. We will also prove global boundedness results for operators with phase functions in $\Phi^2$ that are SND. For the global results to hold, it is necessary that $\displaystyle p>\frac{n}{n+1}$. At this point, it is appropriate to note that the phase function of the Fourier integral operators are in general singular at the origin, therefore in proving various boundedness results, it behoves one to split the operator in high and low frequency parts. Henceforth we shall divide the regularity results into low and high frequency portions.

\subsection{$L^p$ boundedness of a Littlewood-Paley piece of a Fourier integral operator}\label{section: Lp boundedness for pieces}
\noindent Briefly, the result concerning the $L^p$ boundedness of the Littlewood-Paley pieces of an FIO states that, if the operator in question has an amplitude with frequency support in an annulus of size $\sim 2^j$, $j\in \N$, then that operator is $L^p$ bounded. Moreover, the $L^p$ estimate keeps control of the parameter $j$. This will be crucial when estimating the $L^p$ norm o an FIO within the $B_{p,q}^s$ norm.\\

\begin{prop}\label{prop:JFA}
Let $0< p\leq\infty,$ $m_c(p)$ as in \eqref{eq:criticaldecay}, $a\in  S^m(\R^n)$ for $m\in\R$ and $\varphi(x,\xi) \in\mathcal{C}^{\infty}(\R^n \times\R^n \setminus\{0\}),$ be positively homogeneous of degree one in $\xi$. Assume that $\psi_j$ is as in \emph{Definition \ref{def:LP}} and let $T_j$ be a Littlewood-Paley piece of an $\mathrm{FIO}$ $T_a^\varphi$, which is defined by
\begin{equation}\label{eq:JFA3}
T_jf(x) := \int_{\R^n} e^{i\varphi(x,\xi)}\,a(x,\xi)\,(1-\psi_0(2\xi))\,\psi_j(\xi)\,\widehat f(\xi)\ddd \xi.
\end{equation}
 Then if $\varphi\in \Phi^2$ is $\mathrm{SND}$, one has
\begin{equation} \label{eq:JFA1}
\norm{T_jf}_{L^{p}(\R^n)} \lesssim 2^{j(m-m_c(p))}\norm{\Psi_{j} (D) f}_{L^{p}(\R^n)},
\end{equation}
for $j\in \N$ and $\Psi_j$ as defined in  \emph{Definition \ref{def:LP}}. Furthermore, if one assumes that the amplitude $a(x,\xi)$ is compactly supported in $x$, then one has the same result, if the phase function $\varphi$ is assumed to be non-degenerate on the support of $a(x,\xi)$.
\end{prop}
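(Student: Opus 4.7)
The plan is to proceed in three stages: first, reduce the operator to a standard form via phase reduction and, in the global case, a Ruzhansky--Sugimoto change of variables; second, establish the endpoint estimates at $p=1,2,\infty$ via the Seeger--Sogge--Stein decomposition and interpolate; third, extend to the quasi-Banach range $0<p<1$ by a Peetre maximal function argument. Since $\psi_j(\xi)(1-\psi_0(2\xi))$ vanishes outside the annulus on which $\Psi_j\equiv 1$, we have $T_jf=T_j(\Psi_j(D)f)$, so the claim is equivalent to the $L^p\to L^p$ bound $\|T_j g\|_{L^p}\lesssim 2^{j(m-m_c(p))}\|g\|_{L^p}$ for functions $g$ whose Fourier support lies in a dyadic annulus of size $\sim 2^j$. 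By Lemma \ref{phase reduction lem} we may further assume that the amplitude is supported in a small cone around some fixed $\zeta\in\mathbb{S}^{n-1}$ and that the phase has the form $\theta(x,\xi)+\nabla_\xi\varphi(x,\zeta)\cdot\xi$ with $\theta\in\Phi^1$. When $\varphi$ is SND, the map $x\mapsto -\nabla_\xi\varphi(x,\zeta)$ is a global bi-Lipschitz diffeomorphism of $\R^n$, and precomposing with its inverse together with the invariance in Theorem \ref{invariance thm} (as discussed in Section \ref{RS globalisation}) reduces matters to a phase of the form $\theta(x,\xi)-x\cdot\xi$. In the local setting where $a$ has compact $x$-support, the analogous reduction is performed on a neighbourhood of $\supp_x a$.

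Next I would perform the second dyadic decomposition $T_j=\sum_\nu T_j^\nu$ of Definition \ref{def:LP2}, whose kernel $K_j^\nu$ satisfies, by Lemma \ref{lem:kernelestimate} applied with $\alpha=0$,
\[
|K_j^\nu(x,y)|\lesssim \frac{2^{j(m+(n+1)/2)}}{\bigl(1+|2^{j}\nabla_{\xi_1}\varPhi(x,y,\xi_j^\nu)|^{2}\bigr)^{N}\bigl(1+|2^{j/2}\nabla_{\xi'}\varPhi(x,y,\xi_j^\nu)|^{2}\bigr)^{N}}.
\]
For $p=2$ I would invoke the Cotlar--Stein almost-orthogonality argument of Seeger--Sogge--Stein, based on $\sum_\nu(\tilde\chi_j^\nu)^2=1$ from \eqref{eq:quadrseconddyad}, yielding $\|T_j g\|_{L^2}\lesssim 2^{jm}\|g\|_{L^2}$. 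For $p=\infty$, a Schur test using the above kernel bound---integrating in $y$ over the rectangle of volume $\sim 2^{-j}\cdot 2^{-j(n-1)/2}$ and summing over the $\sim 2^{j(n-1)/2}$ cones $\Gamma_j^\nu$---gives
\[
\sup_x\sum_\nu\int|K_j^\nu(x,y)|\,dy\lesssim 2^{j(m+(n-1)/2)}=2^{j(m-m_c(\infty))}.
\]
The case $p=1$ is analogous, integrating in $x$ instead and using SND to make $x\mapsto\nabla_\xi\varphi(x,\xi_j^\nu)$ a diffeomorphism. Riesz--Thorin interpolation between the three endpoints then yields the claim for $1\leq p\leq\infty$.

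The main obstacle is the quasi-Banach range $0<p<1$, where interpolation and the usual duality arguments are unavailable. My plan is to exploit the frequency localisation of $g=\Psi_j(D)f$ through Peetre's maximal function. Lemma \ref{lem:Triebel-Schmeisser}, applied to frequency boxes adapted to each cone $\Gamma_j^\nu$ (of side length $\sim 2^j$ along $\xi_j^\nu$ and $\sim 2^{j/2}$ in the perpendicular directions), permits one to dominate $|g(y)|$ inside the $y$-integral pointwise by $\bigl(M_2(M_1|g|^{r_1})^{r_2/r_1}\bigr)^{1/r_2}(x)$ for arbitrarily small $r_1,r_2>0$. Combining this with the kernel bound and summing over $\nu$ as in the $L^\infty$ computation produces
\[
|T_j g(x)|\lesssim 2^{j(m-m_c(p))}\bigl(M_2(M_1|g|^{r_1})^{r_2/r_1}\bigr)^{1/r_2}(x).
\]
Choosing $r_1,r_2<p$ and taking the $L^p$-norm, the Hardy--Littlewood maximal inequalities on $L^{p/r_1}(\R^n)$ and $L^{p/r_2}(\R^n)$ then close the argument with the required constant $2^{j(m-m_c(p))}$.
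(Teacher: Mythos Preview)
Your outline for $1\le p\le\infty$ is essentially correct (the paper organises things slightly differently---it proves $p=2$ by a direct $T_jT_j^*$ kernel estimate via Lemma~\ref{technic} rather than Cotlar--Stein, and it handles $p=1$ together with $0<p<1$ rather than by a separate Schur test---but your endpoint arguments work). The phase reduction via Lemma~\ref{phase reduction lem} and the global change of variables are in fact unnecessary here: Lemma~\ref{lem:kernelestimate} already gives the right kernel bound for general $\varphi\in\Phi^2$, and the SND condition is only used to change variables $x\mapsto\nabla_\xi\varphi(x,\xi_j^\nu)$ inside the $L^p$ integrals.

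There is, however, a genuine gap in your treatment of $0<p<1$. Lemma~\ref{lem:Triebel-Schmeisser} requires the function to have Fourier support in the anisotropic box $\{|\xi_1|\lesssim 2^j,\ |\xi'|\lesssim 2^{j/2}\}$, and $g=\Psi_j(D)f$ does \emph{not}: its support is the full annulus $|\xi|\sim 2^j$, where $|\xi'|$ can be as large as $2^j$. So you cannot dominate $|g(y)|$ by the anisotropic Peetre maximal function with scales $(2^j,2^{j/2})$. A related symptom is your pointwise bound $|T_jg(x)|\lesssim 2^{j(m-m_c(p))}(\ldots)(x)$: the exponent on the right depends on $p$ while the left side does not, which cannot be correct. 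Summing the pointwise bounds $|T_j^\nu g|\lesssim 2^{jm}(\text{maximal function})$ over the $O(2^{j(n-1)/2})$ cones yields $2^{j(m-m_c(\infty))}$, not $2^{j(m-m_c(p))}$.

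The missing ingredient is a further frequency localisation to each cone. One sets $f_j^\nu:=\mathcal X_j^\nu(D)\Psi_j(D)f$ with $\mathcal X_j^\nu=\tilde\chi_j^\nu\Psi_j$, so that $\widehat{f_j^\nu}$ \emph{is} supported in the anisotropic box and Lemma~\ref{lem:Triebel-Schmeisser} applies to $f_j^\nu$, giving $\|T_j^\nu f\|_{L^p}\lesssim 2^{jm}\|f_j^\nu\|_{L^p}$. To return from $\|f_j^\nu\|_{L^p}$ to $\|\Psi_j(D)f\|_{L^p}$ when $p<1$ one must use Lemma~\ref{lem:hassesfaltning} (the $L^p$-convolution estimate for band-limited functions), which produces a factor $2^{jn(1/p-1)}\|(\mathcal X_j^\nu)^\vee\|_{L^p}$; the latter norm is computed explicitly to be $\sim 2^{j((n+1)/2-(n+1)/(2p))}$. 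Only after this, and after summing $\|T_j^\nu f\|_{L^p}^p$ over $\nu$ in $\ell^p$ (which contributes a further $2^{j(n-1)/(2p)}$), does the correct exponent $m-m_c(p)=m+(n-1)(1/p-1/2)$ emerge. Without Lemma~\ref{lem:hassesfaltning} and the explicit $L^p$ bound for $(\mathcal X_j^\nu)^\vee$, the accounting does not close.
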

\begin{rem}
A careful examination of the proof of the kernel estimates also reveals that \emph{Proposition \ref{prop:JFA}} is valid in the range $0<p\leq 2$ even if $a\in L^\infty S^m(\R^n)$.
\end{rem}
\begin{rem}
The factor $(1-\psi_0(2\xi))$ is inserted in \eqref{eq:JFA3} to cut off the singularity at $\xi=0$ for the case $j=0$. The singularity has to be taken care of separately and this is done in \emph{Propositions \ref{prop:low1 BL}, \ref{prop:low2 BL}} below.
\end{rem}

\begin{rem}
Note that in the Banach cases, i.e. $p\in [1,\infty]$, \eqref{eq:JFA1} is equivalent to the $L^p$ boundedness of operators $T_j$. However in the quasi-Banach cases, i.e $p<1,$ then one can not get rid of the frequency localisation $\Psi_j (D)$, since any $L^p$ bounded translation invariant operator \emph{(}for $0<p<1$\emph{)} is an infinite linear combination \emph{(}with coefficients in $\ell^p$\emph{)} of Dirac measures, see \cite{Oberlin}.
\end{rem}

\begin{proof}[Proof of \emph{Proposition \ref{prop:JFA}}]
Since the proof is rather lengthy and contains several cases, we split it into four steps as follows;\\
\begin{enumerate}
\item In Step 1 we use the kernel estimate from Lemma \ref{lem:kernelestimate} and prove the proposition for the case $0<p\leq 1$.
\item In Step 2 we once again use Lemma \ref{lem:kernelestimate} to obtain the result for $p=\infty$.
\item In Step 3 we deal with the case of $p=2$.
\item In Step 4 we show the result for the cases $1<p<2$ and $2<p<\infty$, and finally interpolation yields the boundedness for the range $1<p\leq \infty$.
\end{enumerate}\hspace*{1cm}\\
Note that in the proofs of (ii), (iii) and (iv), it will be enough to show an estimate of the form $$\norm{T_j f}_{L^{p}(\R^n)} \lesssim 2^{j\left (m-m_c(p)\right )}\norm{f}_{L^{p}(\R^n)},$$
where we could without any cost, insert a frequency localisation on the right hand side of the estimate above.\\

\newcounter{counter}
\stepcounter{counter}
\noindent \textbf{Step \thecounter\ -- Proof of the case $\mathbf{0<p\leq 1\, }$}\\
We will use the partition of unity \eqref{eq:quadrseconddyad} and decompose the operator $T_j$ as $\displaystyle T_j = \sum_{\nu}T_{j}^{\nu},$
where
\begin{equation*}
\begin{split}
T_{j}^{\nu}f(x):\! &=\iint_{\mathbb{R}^{n}\times \R^n }a(x,\xi)\,(1-\psi_0(2\xi))\,\psi_j(\xi)\,\tilde{\chi}_{j}^{\nu}( \xi)^2\, e^{i\varphi(x,\xi)-iy\cdot\xi}\, f(y)\ddd\xi \dd y
\\ &=\int_{\mathbb{R}^{n}}K_{j}^{\nu}(x,y)\, \mathcal X_j^\nu (D)\,\Psi_j(D)\,f(y)\dd y,\\
 K_{j}^{\nu}(x, y)  :\! &=\int_{\mathbb{R}^{n}}a(x,\xi)\,(1-\psi_0(2\xi))\,\psi_j(\xi)\,\tilde{\chi}_{j}^{\nu}(\xi)\, e^{i\varphi(x,\xi)- iy\cdot\xi}\ddd\xi,
\end{split}
\end{equation*}
where $\mathcal X_j^\nu(D) := \tilde{\chi}_j^\nu(D)\, \Psi_j(D)$ with $\tilde{\chi}_j^\nu$ as in \eqref{chitilde} and $\Psi_j(D)$ as in Definition \ref{def:LP}.
Using the properties \eqref{eq:quadrseconddyadcond} and \eqref{eq:quadrseconddyadcond2} which are also valid for $\tilde{\chi}_j^\nu$, one can verify that the kernel $K_j^\nu$ satisfies \eqref{eq:claimkernel} for $\varPhi(x,y,\xi)= \varphi(x,\xi) -y\cdot\xi$. Now set $f_j^\nu :=  \mathcal X_j^\nu (D)  \Psi_j(D)f$ and
\m{\mathsf f_{j}^\nu (z) :\! & =\sup_{y\in \R^n} \left  (1+\big |2^{j}(z_1-y_1)\big |^{2}\right )^{-M}\left  (1+\big |2^{\frac j2}(z'-y')\big |^{2}\right )^{-M} \abs{ f_j^\nu (y)}\\ &\lesssim \sup_{y\in \R^n} \left (1+\big |2^{j}(z_1-y_1)\big |^{2M}\right )^{-1} \left (1+\big |2^{\frac j2}(z'-y')\big |^{2M}\right)^{-1} \abs{f_j^\nu (y)}.
}
Since $\Psi_j \equiv 1$ on the support of $\psi_j$ we have (using \eqref{eq:claimkernel})
\begin{equation*}
	\begin{split}
		\abs{T_{j}^\nu f(x)} &\leq \int_{\R^n} \abs{K_j^\nu (x,y)f_j^\nu(y)}\dd y \\
        &\lesssim  2^{jm}2^{j\frac{n+1}{2}}\mathsf{f}_{j}^{\nu} (\nabla_\xi \varphi(x,\xi_j^\nu)) \int_{\R^n} \left  (1+\big |2^{j}(\nabla_{\xi_1}\varphi(x,\xi_{j}^{\nu})-y_1)\big |^{2}\right )^{M-N}\\ &\times \left (1+\big |2^{\frac j2}(\nabla_{\xi'}\varphi(x,\xi_{j}^{\nu})-y')\big |^{2}\right )^{M-N} \dd y \\
         &\lesssim  2^{jm}2^{j\frac{n+1}{2}}2^{-j\frac{n+1}{2}} \mathsf f_{j}^\nu (\nabla_\xi \varphi(x,\xi_j^\nu))  = 2^{jm} \mathsf f_{j}^\nu (\nabla_\xi \varphi(x,\xi_j^\nu)),
	\end{split}
\end{equation*}
where $\displaystyle M>\frac 1{2p}$, $N-M>n.$\\

Now in Lemma \ref{lem:Triebel-Schmeisser}, take $k=\nu$, $n'=1$ and $\displaystyle r_1=r_2 = \frac 1{2M} <p$ and note that \\$\supp \widehat {f_j^\nu} \subset \left \{ (\xi_1,\xi')\in \R \times \R^{n-1}:\ |\xi_1|\leq 2^j, |\xi'|\leq 2^{\frac j2} \right \}$. Moreover take $c_{j,\nu}=2^j$ and $d_{j,\nu}=2^{\frac j2}$. Then the conditions of Lemma \ref{lem:Triebel-Schmeisser} all hold for $f_j^\nu$ and therefore we have
\begin{equation*}
\left |T_j^\nu f  (x  ) \right |\leq 2^{jm} \mathsf f_{j}^\nu (\nabla_\xi \varphi(x,\xi_j^\nu))\lesssim 2^{jm}\left ( M_{2}\left (M_1|f_j^\nu|^{r_1} \right )^{\frac{r_2}{r_1}}\right ) ^{\frac{1}{r_2}} \left (\nabla_\xi \varphi(x,\xi_j^\nu ) \right ).
\end{equation*}
Taking the $L^p$ norm of the expression above, and using the SND condition on the phase function and changes of variables, the boundedness of the maximal operators $M_1$ and $M_{2}$ yields that
\begin{equation}\label{eq:highfreq2}
\begin{split}
 \norm{T_j^\nu f}_{L^p(\R^n)}&\lesssim  2^{jm}\norm{M_{2}(M_1|f_j^\nu|^{r_1})^{\frac{r_2}{r_1}}}^{\frac 1{r_2}}_{L^{\frac{p}{r_2}}(\R^n)}\\ &\lesssim
2^{jm}\norm{(M_1|f_j^\nu|^{r_1})}_{L^{\frac{p}{r_1}(\R^n)}}^{\frac 1{r_1}}\lesssim 2^{jm}\norm{f_j^\nu}_{L^p(\R^n)}.
\end{split}
\end{equation}
Here we observe that $f_j^\nu$ can be written of the form $f_j^\nu(x) = \left ((\mathcal X_j^\nu)^\vee *  \Psi_j(D)f \right )(x)$. Therefore, Lemma \ref{lem:hassesfaltning} yields

\begin{equation}\label{eq:highfreq3}
\begin{split}
\norm{f_j^\nu}_{L^p(\R^n)} \lesssim  2^{jn\left (\frac 1p-1 \right )}\norm{(\mathcal X_j^\nu) ^\vee}_{L^p(\R^n)}\norm{ \Psi_j(D)f}_{L^p(\R^n)}.
\end{split}
\end{equation}
Now we would like to estimate $\norm{(\mathcal X_j^\nu) ^\vee}_{L^p(\R^n)}$. Indeed, using \eqref{eq:quadrseconddyadcond} and \eqref{eq:quadrseconddyadcond2},
integration by parts $N$ times yields
\begin{equation*}
	\left (1+\big |2^{j}z_1\big |^2 \right )^{N}\left (1+\big |2^{\frac j2}z'\big |^2 \right )^{N} \left|\brkt{\mathcal X_{j}^{\nu}}^\vee(z)\right|\lesssim \int_{\supp {\mathcal X_j^\nu}} \abs{L^N \mathcal X_j^\nu(\xi)}\ddd\xi \lesssim 2^{j\frac{n+1}{2}},
\end{equation*}
where we have used that $\left |\supp{\mathcal X_j^\nu} \right | = O\brkt{2^{j\frac{n+1}{2}}}$. Hence, it follows that
\begin{equation} \label{eq:highfreq4}
\begin{split}
	\norm{\brkt{\mathcal X_{j}^{\nu}}^\vee}_{L^p(\R^n)} &\lesssim 2^{j\frac{n+1}{2}} \left ( \int_{\R^n} \frac{1}{\left (\left (1+\big |2^{j}z_1\big |^2\right )^{N}\left (1+\big |2^{\frac j2}z'\big |^2\right )^{N}\right )^p} \dd z \right )^{\frac 1p} \\ &=2^{j\frac{n+1}{2}} \left ( \int_{\R^n} \frac{2^{-j}2^{-j\frac{n-1}{2}}}{\left (\left (1+\big |z_1\big |^2\right )^{N}\left (1+\big |z'\big |^2\right )^{N}\right )^p} \dd z \right )^{\frac 1p} \\ & \lesssim  2^{j\left (\frac{n+1}{2}-\frac{n+1}{2p}\right )},
\end{split}
\end{equation}
for $N>n$. Inserting (\ref{eq:highfreq4}) in (\ref{eq:highfreq3}) and then (\ref{eq:highfreq3}) into (\ref{eq:highfreq2}) one has
\m{
\norm{T_j^\nu f}_{L^p(\R^n)} &\lesssim  2^{jm}2^{jn\left (\frac 1p-1 \right )}2^{j\left (\frac{n+1}{2}-\frac{n+1}{2p}\right )}\norm{\Psi_j(D)f}_{L^p(\R^n)}\\ &=2^{j\brkt{m+\frac{n-1}{2p}-\frac{n-1}{2}}} \norm {\Psi_j(D)f}_{L^p(\R^n)}.
}
Summing in $\nu$ (note that there are $O\brkt{2^{j\frac{n-1}{2}}}$ terms involved)
\begin{equation*}
\begin{split}
\norm{T_j f}_{L^p(\R^n)} & \leq  \left (\sum_\nu \norm{T_j^\nu f}_{L^p(\R^n)}^p \right )^{\frac 1p}  \\ &\lesssim \left (\sum_\nu 2^{j\brkt{mp+\frac{n-1}{2}-p\frac{n-1}{2}}}  \norm{\Psi_j (D) f}_{L^p(\R^n)}^p\right )^{\frac 1p}\\ & \lesssim  2^{j\left (m+\frac{n-1}{p}-\frac{n-1}{2}\right )}  \norm{\Psi_j (D)f }_{L^p(\R^n)}\\ & = 2^{j(m-m_c(p))}  \norm{\Psi_j (D)f }_{L^p(\R^n)},
\end{split}
\end{equation*}
and hence the proposition is proven for $0<p\leq1$.\\

\stepcounter{counter}

\noindent \textbf{Step \thecounter\ -- Proof of the case $\mathbf{p= }\, \boldsymbol{\infty} \,$}  \\
Once again we decompose $\R^n$ into cones as in Definition \ref{def:LP2}. This time the partition of unity $\chi_j^\nu$ defined in \eqref{the first partition of unity}. We then decompose $T_j$ as $ \displaystyle T_j = \sum_{\nu}T^{\nu}_{j} $, where
\m{
T_j^\nu f(x) := \int_{{\R^n}} e^{i\varphi(x,\xi)}\,(1-\psi_0(2\xi))\,\psi_j(\xi)\,\chi_j^\nu(\xi)\,a(x,\xi)\,\widehat f(\xi) \ddd\xi = \int_{\R^n} K_j^\nu (x,y)f(y)\dd y,
}
for
\m{
K_j^\nu (x,y) := \int_{\R^n} e^{i\varphi(x,\xi)-iy\cdot\xi}\,(1-\psi_0(2\xi))\,\psi_j(\xi)\,\chi_j^\nu(\xi)\,a(x,\xi)\ddd\xi.
}
This yields
\nm{eq:finfty1}{
\abs{T_j^\nu f(x)} \leq \norm{K_j^\nu(x,\cdot)}_{L^1(\R^n)}\norm{f}_{L^\infty(\R^n)}
}
Once again we have that $K_j^\nu$ satisfies \eqref{eq:claimkernel}, and by a change of variables 
\m{\norm{K_j^\nu(x,\cdot)}_{L^1(\R^n)} &\lesssim 2^{j\brkt{m+\frac{n+1}{2}}}\int_{\R^n}\left (1+\big|2^{j}(\nabla_{\xi_1}\varphi(x,\xi_{j}^{\nu})-y_1)\big|^{2}\right )^{-N}\\ &\times \left (1+\big |2^{\frac j2}(\nabla_{\xi'}\varphi(x,\xi_{j}^{\nu})-y')\big|^{2}\right )^{-N} \dd y \lesssim  2^{jm}.}
Hence the left hand side of \eqref{eq:finfty1} is bounded by $2^{jm}\norm f_{L^\infty(\R^n)}$ uniformly in $x$. Using the fact that there are roughly $O\brkt{2^{j\frac{n-1}2}}$ terms in the sum in $\nu$, \m{\norm{T_jf}_{L^\infty(\R^n)} \lesssim \sum_{\nu}\norm{T_j^\nu f}_{L^\infty(\R^n)} \lesssim 2^{j\brkt{m+\frac{n-1}2}}\norm f_{L^\infty(\R^n)} = 2^{j\brkt{m-m_c(\infty )}}\norm f_{L^\infty(\R^n)}}
and hence the proposition, when $p=\infty$, is proven.\\

\stepcounter{counter}
\noindent \textbf{Step \thecounter\ -- Proof of the case $\mathbf{p=2\, }$}\\
We proceed by studying the boundedness of $S_j:=T_j\circ T_{j}^{\ast}$.  A simple calculation shows that $\displaystyle S_j f(x)=\int_{\R^n} K_j(x,y) f(y)\dd y$ with
 \begin{equation*}
 K_j(x,y):=\int_{\R^n} e^{i(\varphi(x,\xi)-\varphi(y,\xi))}\, (1-\psi_0(2\xi))^2\, \psi_j(\xi)^2\,a(x,\xi)\, \overline{a(y,\xi)}\ddd\xi.
 \end{equation*}
Now since $\varphi$ is homogeneous of degree $1$ in the $\xi$ variable, $K_j (x,y)$ can be written as
\begin{equation*}
K_{j}(x,y)=2^{jn}\int_{\R^n} b_{j}(x,y,2^{j}\xi)
e^{i2^j \Phi(x,y,\xi)}\ddd\xi.
\end{equation*}
with
\m{\Phi(x,y,\xi)&:= \varphi (x,\xi) -\varphi (y,\xi), \\ b_j\left (x,y,\xi \right )&:={(1-\psi_0(2\xi))^2}\,\psi_j(\xi)^2\,a(x,\xi)\, \overline{a(y,\xi)}.
} Observe that the $\xi$-support of $b_{j}(x,y,2^{j}\xi)$ lies in the compact set $\displaystyle \mathcal{K}:=\left \{\frac{1}{2}\leq \abs{\xi}\leq 2\right \}$.  From the $\Phi^2$ and SND conditions \eqref{SND} it also follows that
\begin{equation}\label{Phi cond 1}
\vert \nabla_{\xi}\Phi (x,y, \xi)\vert \approx \vert x-y\vert, \quad \text{for any $x,y\in \R^n$ and $\xi\in \mathcal{K}$}.
\end{equation}Assume that $M>n$ is an integer, fix $x\neq y$ and set $\phi(\xi):=\Phi (x,y, \xi)$, $\Psi:=\abs{\nabla_\xi \phi}^2$. By the mean value theorem, \eqref{C_alpha} and \eqref{Phi cond 1}, for any multi-index $\alpha$ with $\abs{\alpha}\geq 1$ and any $\xi\in \mathcal{K}$,
\[
    \abs{\d^\alpha_{\xi} \phi(\xi)}\lesssim \vert \nabla_{\xi}\Phi (x,y, \xi)\vert= \Psi^{\frac 12}.
\]
On the other hand, since $\displaystyle \d^\alpha_{\xi} \Psi=\sum_{j=1}^n \sum_{\beta\leq\alpha}  \binom{\alpha}{\beta} \d^\beta_{\xi}\d_{\xi_j} \phi \d^{\alpha-\beta}_{\xi}\d_{\xi_j}  \phi$, it follows that, for any $\abs{\alpha}\geq 0$, $\abs{\d^\alpha_{\xi} \Psi}\lesssim \Psi$. We estimate the kernel $K_j$ in two different ways. For the first estimate, \eqref{Phi cond 1} and Lemma \ref{technic} with $F=b_{j} (x,y,2^{j}\xi  ),$ yield
\begin{equation}\label{eq:JFAest1}
\begin{split}
&\vert K_{j}(x,y)\vert \\  \leq &\,2^{j n} 2^{-j M}\ C_{M,\mathcal{K}} \sum _{\vert \alpha \vert\leq M} 2^{j\abs{\alpha}}\int_{\R^n} {\left \vert \partial^{\alpha}_{\xi} b_{j}(x,y,2^j \xi) \right \vert \big \vert \nabla_{\xi}\Phi(x,y,\xi) \big \vert^{-M}} \ddd\xi \\
\lesssim & \,2^{-j M} \abs{x-y}^{-M} \sum _{\vert \alpha \vert\leq M}  2^{j\abs{\alpha}} \int_{\R^n} \abs{\partial^{\alpha}_{\xi} b_{j}(x,y,\xi)}\ddd\xi \\ \lesssim &\, 2^{j(2m+n)}\brkt{2^{j } \abs{x-y}}^{-M}.
\end{split}
\end{equation}
where the fact that the $\xi$ support of $b_j$ lies in a ball of radius $\sim 2^j$ and that
\begin{equation}\label{eq:mk}
	\abs{\partial^{\alpha}_{\xi} b_{j}(x,y,\xi)} \lesssim 2^{j(2m-|\alpha|)},
\end{equation} have been used. Using \eqref{eq:mk} we also obtain
\begin{equation}\label{eq:JFAest2}
\vert K_{j}(x,y)\vert \leq {\int_{\R^n} \abs{b_j(x,y,\xi)}\ddd\xi} \lesssim 2^{j(2m+n)},
\end{equation}
and when combining estimates \eqref{eq:JFAest1} and \eqref{eq:JFAest2} one has
\begin{equation}\label{eq:Kk}
	\vert K_{j}(x,y)\vert\lesssim 2^{j(2m+n)} \brkt{1+2^j\abs{x-y}}^{-M}.
\end{equation}
Thus, using \eqref{eq:Kk} and Minkowski's inequality we have \m{\norm{S_jf}_{L^2(\R^n)} \lesssim 2^{j(2m+n)}\norm{\, \int_{\R^n} \brkt{1+2^j|y|}^{-M}f(\,\cdot-y)\dd y\, }_{L^2(\R^n)} \lesssim 2^{2jm}\norm{f}_{L^2(\R^n)}. }
Since $m_c(2)=0$, the Cauchy-Schwarz inequality yields\m{\norm{T_j^{\ast}f}^2_{L^2(\R^n)} = {\left \langle T_j T_j^{\ast}f,f\right\rangle_{L^2(\R^n)}}  \lesssim \norm{S_jf}_{L^2(\R^n)} \norm f_{L^2(\R^n)} = {2^{2j(m-m_c(2))}\norm f^2_{L^2(\R^n)}}.}
Therefore $\displaystyle \norm{T_j}_{L^2(\R^n)\to L^2(\R^n)}=\|T_j^*\|_{L^2(\R^n)\to L^2(\R^n)}\lesssim 2^{j(m-m_c(2))}$ and the proposition is proven for the case $p=2$.\\

\stepcounter{counter}
\noindent \textbf{Step \thecounter\ -- Proof of the case $\mathbf{1<p<2}$ and $\mathbf{2<p<}\, \boldsymbol{ \infty}$}\\
Now that we have the desired result for $p=1$, $p=2$ and $p=\infty$, we can complete the proof of the proposition. Indeed, the Riesz-Thorin interpolation theorem in $1<p<2$ and $2\leq p\leq \infty$ yields that
\begin{equation*}
\norm{T_j f}_{L^{p}(\R^n)} \lesssim 2^{j\left (m-m_c(p)\right )}\norm{f}_{L^{p}(\R^n)},
\end{equation*}
which thereby concludes the proof of Proposition \ref{prop:JFA}, when the amplitude is not compactly supported in $x$.\\

In case $a(x,\xi)$ is compactly supported in $x$, then the homogeneity of the phase, and its non-degeneracy will once again yield all the kernel estimates above, and therefore the proof goes along the exact same lines as in the non-compactly supported case.
\end{proof}

\subsection{Besov-Lipschitz boundedness for the high frequency portion of FIOs }\label{section:the global section}

\noindent In this section we prove the boundedness of FIOs, where the amplitudes are frequency-supported outside the origin. To this end we have the following:

\begin{prop}\label{prop:high}
Let $0< p,q\leq\infty,$ $m_c(p)$ as in \eqref{eq:criticaldecay}, $a\in S^m(\R^n)$ and \linebreak$\varphi(x,\xi) \in\mathcal{C}^{\infty}(\R^n \times\R^n \setminus\{0\}),$ be positively homogeneous of degree one in $\xi$. Then if $\varphi\in \Phi^2$ satisfies the $\mathrm{SND}$ condition \eqref{SND}, then the operator $T_{\sigma}^\varphi$ given by \eqref{definition of FIO} with amplitude $\sigma( x, \xi):= (1-\psi_0(\xi))\,a(x,\xi)$
satisfies $\displaystyle  T_{\sigma}^\varphi: B_{p,q}^{s+m-m_c(p)}(\R^n)\rightarrow B_{p,q}^s(\R^n)$, for any $s\in \R$. Furthermore, if one assumes that the amplitude $a(x,\xi)$ is compactly supported in $x$, then one has the same result, if the phase function $\varphi$ is assumed to be non-degenerate on the support of $a(x,\xi)$.
\end{prop}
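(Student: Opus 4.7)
The plan is to reduce the Besov--Lipschitz estimate for $T_\sigma^\varphi$ to the single-piece $L^p$ estimate of Proposition~\ref{prop:JFA} by combining a dyadic decomposition of the operator with an almost-orthogonality argument. First I would write $\sigma=\sum_{k=0}^\infty\sigma_k$ with $\sigma_k(x,\xi):=a(x,\xi)(1-\psi_0(\xi))\psi_k(\xi)$, so that $T_\sigma^\varphi=\sum_k T_k$ for $T_k:=T_{\sigma_k}^\varphi$. Since $\sigma_k$ is $\xi$-supported in an annulus on which $\Psi_k\equiv 1$, one has $T_kf=T_k\Psi_k(D)f$, and Proposition~\ref{prop:JFA} delivers
\begin{equation*}
\|T_kf\|_{L^p(\R^n)}\lesssim 2^{k(m-m_c(p))}\|\Psi_k(D)f\|_{L^p(\R^n)}.
\end{equation*}

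The crucial ingredient is the almost-orthogonality bound
\begin{equation*}
\|\psi_j(D)T_kf\|_{L^p(\R^n)}\lesssim 2^{-N|j-k|}\,2^{k(m-m_c(p))}\|\Psi_k(D)f\|_{L^p(\R^n)}
\end{equation*}
for arbitrary $N\geq 1$. To establish it I would apply Theorem~\ref{prop:monsteriosity} with parameter $t=2^{-(j-1)}$ and $\rho=\psi$, so that $\rho(tD)=\psi_j(D)$ and $\psi_j(D)T_k=T_{\sigma_t}^\varphi$ for a new amplitude $\sigma_t$. The principal piece of $\sigma_t$ is $\psi(2^{-(j-1)}\nabla_x\varphi(x,\xi))\,\sigma_k(x,\xi)$; because SND combined with the degree-one homogeneity of $\varphi$ forces $|\nabla_x\varphi(x,\xi)|\asymp|\xi|$ for $|\xi|\geq 1$, this piece vanishes identically once $|j-k|$ exceeds a fixed constant $C_0$. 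The same frequency localisation is inherited by the intermediate symbols $\sigma_\alpha$, which carry the prefactor $t^{|\alpha|\varepsilon}=2^{-(j-1)|\alpha|\varepsilon}$ and are symbols of order $m-|\alpha|(\tfrac12-\varepsilon)$ uniformly in~$t$. The remainder contributes an amplitude of order $m-M(\tfrac12-\varepsilon)$ with prefactor $t^{M\varepsilon}=2^{-(j-1)M\varepsilon}$; feeding it back into Proposition~\ref{prop:JFA} (noting the $\xi$-support is still $\sim 2^k$, so $\langle\xi\rangle\sim 2^k$) converts the combined decay in $t$ and $\langle\xi\rangle$ into the factor $2^{-N|j-k|}$ provided $M$ is chosen large enough in terms of $N$ and $\varepsilon$.

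To conclude, setting $p_1:=\min(1,p)$ and using the $p_1$-subadditivity of $\|\cdot\|_{L^p}^{p_1}$ together with the almost-orthogonality bound, one obtains
\begin{equation*}
(2^{js}\|\psi_j(D)T_\sigma^\varphi f\|_{L^p})^{p_1}\lesssim \sum_k 2^{-p_1(N-|s|)|j-k|}\,d_k^{p_1},\qquad d_k:=2^{k(s+m-m_c(p))}\|\Psi_k(D)f\|_{L^p}.
\end{equation*}
Choosing $N>|s|$ makes the $(j-k)$-kernel summable; Young's convolution inequality (if $q\geq p_1$) or the subadditivity $\|\cdot\|_{\ell^{q/p_1}}^{q/p_1}\leq\sum|\cdot|^{q/p_1}$ (if $q<p_1$) applied in $\ell^{q/p_1}$ then yields $\|T_\sigma^\varphi f\|_{B^s_{p,q}}\lesssim\|f\|_{B^{s+m-m_c(p)}_{p,q}}$. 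The compactly supported variant of the statement is handled by the identical argument with non-degeneracy of $\varphi$ on $\supp_x a$ replacing SND, exactly as in Proposition~\ref{prop:JFA}.

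The main obstacle is the almost-orthogonality step. Theorem~\ref{prop:monsteriosity} furnishes symbolic decay in $t$ and in $\langle\xi\rangle$ separately, and only a careful balance of these, together with the frequency localisation inherited from $\sigma_k$, converts the estimate into true decay in $|j-k|$; in particular one must verify that the intermediate symbols $\sigma_\alpha$ pick up the $|\xi|\sim 2^j$ localisation from the factor $\rho(t\nabla_x\varphi)$, so that they too vanish when $|j-k|>C_0$. A small preliminary point is the comparability $|\nabla_x\varphi(x,\xi)|\asymp|\xi|$, which is not an explicit hypothesis but follows from SND and the homogeneity of $\varphi$ via a compactness argument on the unit sphere.
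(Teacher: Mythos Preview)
Your proposal is correct and rests on the same two ingredients as the paper's proof: the parameter-dependent composition formula of Theorem~\ref{prop:monsteriosity} and the single-piece $L^p$ bound of Proposition~\ref{prop:JFA}. The only difference is organisational. The paper applies Theorem~\ref{prop:monsteriosity} once to the full amplitude $\sigma$, obtaining main terms $T^\varphi_{\sigma_{\alpha,j}}$ whose amplitudes are automatically frequency-localised at $|\xi|\sim 2^j$ (and hence feed directly into the $j$-th block of the Besov norm), together with a remainder $T^\varphi_{r_j}$ of order $m-M(\tfrac12-\varepsilon)$ which is bounded as a map $B^{s+m-m_c(p)}_{p,q}\to L^p$ uniformly in $j$ after a further Littlewood--Paley decomposition in $k$. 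You instead decompose $\sigma=\sum_k\sigma_k$ first and recast the same information as an almost-orthogonality estimate $2^{-N|j-k|}$, then sum by Young's inequality; this is equivalent and perhaps slightly more symmetric. One small caveat: your justification of $|\nabla_x\varphi(x,\xi)|\asymp|\xi|$ by ``compactness on the unit sphere'' is only adequate in the compactly-supported case; for the global SND statement you need the quantitative argument via Euler's identity $\nabla_x\varphi(x,\xi)=\big(\partial^2_{x_j\xi_k}\varphi(x,\xi)\big)\xi$, using the $\Phi^2$ bound on the entries of the mixed Hessian for the upper bound and the SND lower bound on its determinant for the lower bound, so that the estimate is uniform in $x\in\R^n$.
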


\begin{proof}
We divide the proof into three steps. In Step 1 we invoke a composition formula which yields a sum of two terms (a main term and a remainder term) that need to be analysed separately, and conclude that the main term  is $L^p$ bounded (in the sense of Proposition \ref{prop:JFA}). In Step 2 we show $B_{p,q}^s \to L^p$ boundedness for the remainder term and in Step 3 we complete the proof by deducing the $ B_{p,q}^{s+m-m_c(p)}\rightarrow B_{p,q}^s$ boundedness.\\

\noindent \textbf{Step 1 -- a composition formula and boundedness of the main term}\\ In the definition of the Besov-Lipschitz norm, the expression $\psi_j(D)T_{\sigma}^{\varphi}f$ plays a central role. To obtain favourable estimates for $\psi_j(D)T_{\sigma}^{\varphi}f$ we shall use the parameter-dependent composition formula in Theorem \ref{prop:monsteriosity}. According to that formula, for any integer $M\geq 1$ we can write
\begin{equation}\label{eq:highfreq1}
\psi\left (2^{-j}D \right ) T_{\sigma}^\varphi = \sum_{|\alpha|\leq M-1} \frac{2^{-j|\alpha|}}{\alpha!}T^{\varphi}_{\sigma_{\alpha,j}} + 2^{-jM\varepsilon}T^\varphi_{r_j},
\end{equation}
where $\displaystyle 0<\varepsilon<\frac 12$. Observe that we have replaced $t$ by $2^{-j}$ in Theorem \ref{prop:monsteriosity}. Now
\m{
\left |\partial_\xi^\gamma \partial_x^\beta \sigma_{\alpha,j}(x,\xi) \right |&\lesssim 2^{-j|\alpha|(\varepsilon-1)}\la \xi\ra^{m-|\alpha|\left (\frac 12 -\varepsilon \right )-|\gamma|},\\  \supp_\xi \sigma_{\alpha,j}(x,\xi) &= \left \{ \xi\in\R^n: \ C_1 2^j\leq |\xi| \leq C_2 2^j \right \}\text{ and } \\ r_j (x,\xi)&\in S^{m-M\left (\frac 12 -\varepsilon \right )}(\R^n),
}
where we mention in passing that $r_j (x,\xi)$ vanishes in a neighborhood of $\xi=0$. 

\noindent Therefore Proposition \ref{prop:JFA} and change of variables, imply that
\begin{equation} \label{eq:highfreq8}
\begin{split}
\norm{T^{\varphi}_{\sigma_{\alpha,j}} f}_ {L^p(\R^n)} \lesssim 2^{j(m-m_c(p))}   \norm{\Psi_j(D)f}_{L^p(\R^n)}.
\end{split}
\end{equation}

\noindent \textbf{Step 2 -- The remainder term}\\
To deal with the remainder term of \eqref{eq:highfreq1}, we decompose $T^\varphi_{r_j}$ in  into Littlewood-Paley pieces as follows:
\m{T^\varphi_{r_j} f(x) = \sum_{{k=0}}^\infty T^\varphi_{r_{j,k}}f(x),}
where $T^\varphi _{r_{j,k}}$ is an FIO with amplitude $r_{j,k}:= r_j(x,\xi)\,\psi_k(\xi)$ and the $\psi_k$'s are defined in Definition \ref{def:LP}. We use the fact that for $0<p\leq\infty,$
\begin{equation}\label{quasibanch triangel}
\norm{f+g}_{L^p(\R^n)} \leq 2^{C_p}\left (\norm{f}_{L^p(\R^n)}+\norm{g}_{L^p(\R^n)}\right ),
\end{equation}
where $\displaystyle C_p:=\max\left (0,\frac 1p -1\right )$.
Now Fatou's lemma and iteration of \eqref{quasibanch triangel} yield that
\begin{equation*}
\begin{split}
\norm{T^\varphi_{r_{j}} f}_{L^p(\R^n)}&=\norm{ \sum_{k=0}^\infty T^\varphi_{r_{j,k}}f}_{L^p(\R^n)} \leq \liminf_{N\to\infty}\norm{\sum_{k=0}^N T^\varphi_{r_{j,k}}f}_{L^p(\R^n)} \\& \lesssim \liminf_{N\to\infty}\sum_{k=0}^N2^{kC_p}\norm{ T^\varphi_{r_{j,k}}f}_{L^p(\R^n)} \lesssim\sum_{k=0}^\infty 2^{kC_p}\norm{T^\varphi_{r_{j,k}}f}_{L^p(\R^n)},
\end{split}
\end{equation*}
where the hidden constant in the last estimate only depend on $p$. Therefore, applying Proposition \ref{prop:JFA} with $\displaystyle m-M\left (\frac 12 - \varepsilon \right )$ instead of $m$ (recall that $r$ vanishes in a neighborhood of $\xi=0$), we obtain
\begin{equation}\label{eq:highfreq6}
\begin{split}
\norm{T^\varphi_{r_{j}}f}_{L^p(\R^n)} &\lesssim \sum_{k=0}^\infty 2^{kC_p}\norm{T^\varphi_{r_{j,k}}f}_{L^p(\R^n)} \\ &\lesssim \sum_{k=0}^\infty 2^{k\left (C_p+m-m_c(p)-M\left (\frac 12 - \varepsilon \right ) \right )}\norm{\Psi_k(D)f}_{L^p(\R^n)}.
\end{split}
\end{equation}
Note that the estimate \eqref{eq:highfreq6} is uniform in $j$. Now take
\begin{equation}\label{eq:highfreq7}
M>\max \left (\frac{C_p-s}{\frac 12-\varepsilon },\frac s\varepsilon \right ).
\end{equation}
Then we claim that
\begin{equation}\label{eq:highfreq9}
T^\varphi_{r_{j}}:B_{p,q}^s(\R^n)\to L^p(\R^n).
\end{equation}
To see this, we shall analyse the cases $0<q<1$ and $1\leq q \leq \infty$ separately. Starting with the former, we have
\begin{equation*}
\begin{split}
\norm{T^\varphi_{r_{j}}f}_{L^p(\R^n)} \lesssim &\sum_{k=0}^\infty 2^{k\left (C_p+m-m_c(p)-M\left (\frac 12 - \varepsilon \right ) \right )}\norm{\Psi_k(D)f}_{L^p(\R^n)}  \\  \lesssim & \sum_{k=0}^\infty 2^{k(s+m-m_c(p))}\norm{\Psi_k(D)f}_{L^p(\R^n)} \\ \leq &\left (\sum_{k=0}^\infty 2^{kq(s+m-m_c(p))}\norm{\Psi_k(D)f}_{L^p(\R^n)}^q\right )^{\frac 1q}= \norm{f}_{B_{p,q}^{s+m-m_c(p)}(\R^n)},
\end{split}
\end{equation*}
where we used (\ref{eq:highfreq6}) for the first inequality and (\ref{eq:highfreq7}) for the second. For $1\leq q \leq \infty$ we have in a similar way
\begin{equation*}
\begin{split}
\norm{T^\varphi_{r_{j}} f}_{L^p(\R^n)} &\lesssim \sum_{k=0}^\infty 2^{k\left (C_p+m-m_c(p)-M\left (\frac 12 - \varepsilon \right ) \right )}\norm{\Psi_k(D)f}_{L^p(\R^n)} \\ &= \sum_{k=0}^\infty 2^{k\left (-s +C_p-M\left (\frac 12 - \varepsilon \right ) \right )}\brkt{2^{k(s+m-m_c(p))}\norm{\Psi_k(D)f}_{L^p(\R^n)}}  \\ &\lesssim \left ( \sum_{k=0}^\infty 2^{kq'\left (-s +C_p-M\left (\frac 12 - \varepsilon \right ) \right )}\right )^{\frac 1{q'}} \left ( \sum_{k=0}^\infty 2^{kq(s+m-m_c(p))}\norm{\Psi_k(D)f}^{q}_{L^p(\R^n)}\right )^{\frac 1q} \\ & \lesssim \norm{f}_{B_{p,q}^{s+m-m_c(p)}(\R^n)}
\end{split}
\end{equation*}
and the claim (\ref{eq:highfreq9}) is proven. Note that the calculation above also holds for $q=\infty$ with the usual interpretation of H\"older's inequality. \\\\
\textbf{Step 3 -- The $\mathbf{B_{p,q}^{s+m-m_c(p)}\to B_{p,q}^s}$ boundedness}\\
The results in (\ref{eq:highfreq8}) and (\ref{eq:highfreq9}) yield that
\begin{equation*}\label{eq:highfreq10}
\begin{split}
&\norm{T_{\sigma}^\varphi f}_{B_{p,q}^{s}(\R^n)} \\=& \left (\sum_{j=0}^\infty \left (2^{js}\norm{\psi\brkt{2^{-j}D}T_{\sigma}^\varphi f}_{L^p(\R^n)}\right )^q\right )^{\frac 1q}\\ \lesssim&  \left (\sum_{j=0}^\infty \left (\sum_{|\alpha|\leq M-1} 2^{js}\norm{T^\varphi_{\sigma_{\alpha,j}} f}_{L^p(\R^n)} + 2^{-j(M\varepsilon-s)}\norm{T^\varphi_{r_j} f}_{L^p(\R^n)}\right )^q \right )  ^{\frac 1q}  \\ \lesssim&  \brkt{ \sum_{j=0}^\infty \left ( 2^{j(s+m-m_c(p))}\norm{ \Psi_j(D)f}_{L^p(\R^n)} + 2^{-j(M\varepsilon-s)} \norm{ f}_{B_{p,q}^{s+m-m_c(p)}(\R^n)}\right  )^q}^{\frac 1q} \\ \lesssim& \left (\sum_{j=0}^\infty  2^{jq(s+m-m_c(p))}\norm{ \Psi_j(D)f}_{L^p(\R^n)}^q + \sum_{j=0}^{\infty}2^{-jq(M\varepsilon-s)}\norm{ f}_{B_{p,q}^{s+m-m_c(p)}(\R^n)}^q \right )^{\frac 1q}\\ \lesssim& \left (\norm{ f}_{B_{p,q}^{s+m-m_c(p)}(\R^n)}^q\right )^\frac 1q = \norm{ f}_{B_{p,q}^{s+m-m_c(p)}(\R^n)},
\end{split}
\end{equation*}

and the proof is complete.
\end{proof}\hspace*{1cm}

\subsection{Besov-Lipschitz boundedness of the low frequency portion of FIOs}\label{section:the local section Besov}
\noindent In this section we prove the boundedness of FIOs, where the amplitudes are frequency-supported in a neighbourhood of the origin. In this case, we will need to distinguish between two cases. First we assume that  the amplitude of our FIO is compactly supported in the $x$-variable. This extra assumption enables us to prove the boundedness for the whole range $0<p\leq \infty$. In the second case, we remove the assumption of compact support in the spatial variable on the amplitude. In this case it turns out that we have to confine ourselves to the range $\displaystyle \frac{n}{n+1}<p\leq\infty$. We start with the local result.

In what follows we let $T_{a_0}^\varphi$ denote an FIO with amplitude $a_0(x,\xi):=a(x,\xi)\psi_0(\xi),$ where $\psi_0$ is as in Definition \ref{def:LP}.

\begin{prop}[Local boundedness]\label{prop:low1 BL}
Let $a(x,\xi)\in S^m(\R^n)$ be compactly supported in the $x$ variable and let $\varphi(x,\xi) \in\mathcal{C}^{\infty}(\R^n \times\R^n \setminus\{0\}),$ be positively homogeneous of degree one in $\xi$, and non-degenerate on the support of $a(x,\xi)$. Then $T_{a_0}^\varphi :B_{p,q_1}^{s_1}(\R^n)\to B_{p,q_2}^{s_2}(\R^n),$  for any $s_1, s_2\in (-\infty, \infty)$, and $p, q_1, q_2 \in (0,\infty]$.
\end{prop}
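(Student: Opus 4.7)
The plan is to exploit the strong a priori smoothness and support properties forced by the cutoff $a_0(x,\xi)=a(x,\xi)\psi_0(\xi)$: since the amplitude is compactly supported both in $x$ and in $\xi$, the output $T_{a_0}^\varphi f$ will automatically be compactly supported in $x$ and (once the singularity of $\varphi$ at $\xi=0$ is tamed) smooth, with every seminorm controlled by $\|f\|_{B_{p,q_1}^{s_1}(\R^n)}$. The continuous embedding $\mathcal{C}_c^\infty(\R^n)\hookrightarrow B_{p,q_2}^{s_2}(\R^n)$, valid for any $s_2\in\R$ and $q_2\in(0,\infty]$, will then finish the argument.

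Concretely, I would first rewrite
\[
T_{a_0}^\varphi f(x) = \int_{\R^n} K(x,y)\, g(y)\, \dd y, \qquad g := \psi_0(D) f,
\]
with kernel $K(x,y) := \int_{\R^n} a(x,\xi)\psi_0(\xi)\, e^{i\varphi(x,\xi)-iy\cdot\xi}\,\ddd\xi$. Because $\widehat{g}$ is supported in $B(0,2)$, Bernstein's inequality (Lemma \ref{lem:bernstein}) yields $\|g\|_{L^\infty(\R^n)} \lesssim \|g\|_{L^p(\R^n)}$ for every $p\in(0,\infty]$, and since $\|\psi_0(D)f\|_{L^p(\R^n)}$ is simply the $j=0$ term of the Besov-Lipschitz norm, one obtains $\|g\|_{L^\infty(\R^n)} \lesssim \|f\|_{B_{p,q_1}^{s_1}(\R^n)}$ uniformly in all $s_1\in\R$ and $q_1\in(0,\infty]$.

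The heart of the argument is a pointwise decay bound
\[
\sup_{x\in\R^n} \bigl|\partial_x^\gamma K(x,y)\bigr| \lesssim_\gamma \jap{y}^{-n-\mu}, \qquad \mu \in (0,1),
\]
obtained by applying Lemma \ref{lem:David-Wulf} to the integrand $b_\gamma(x,\xi) := \partial_x^\gamma \bigl(a(x,\xi)\psi_0(\xi)e^{i\varphi(x,\xi)}\bigr)$. A Leibniz/Fa\`a-di-Bruno expansion, together with the fact that each $\partial_\xi^\alpha \partial_x^\beta \varphi$ is homogeneous of degree $1-|\alpha|$ in $\xi$, shows that $|\partial_\xi^\alpha b_\gamma(x,\xi)| \lesssim |\xi|^{1-|\alpha|}$ on the support of $\psi_0$; the compact $x$-support of $a$ makes this bound uniform in $x$, so the hypothesis of Lemma \ref{lem:David-Wulf} is satisfied. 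This is the only substantive obstacle: the factor $e^{i\varphi}$ is not smooth at $\xi=0$, so plain integration by parts cannot produce Schwartz-type decay — but $\jap{y}^{-n-\mu}$ with $\mu>0$ is integrable on $\R^n$, and that is all we need. Note that the non-degeneracy of $\varphi$ plays no role here; only the $\Phi^1$-type homogeneity is used.

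Combining the two ingredients yields, for every multi-index $\gamma$,
\[
\bigl|\partial_x^\gamma T_{a_0}^\varphi f(x)\bigr| \leq \|g\|_{L^\infty(\R^n)} \,\bigl\|\partial_x^\gamma K(x,\cdot)\bigr\|_{L^1(\R^n)} \lesssim_\gamma \|f\|_{B_{p,q_1}^{s_1}(\R^n)},
\]
uniformly in $x$. Since $T_{a_0}^\varphi f$ is supported in the compact $x$-projection of $\supp a$, these estimates place $T_{a_0}^\varphi f$ in a bounded subset of $\mathcal{C}_c^\infty(\R^n)$ whose $\mathcal{C}^N$ seminorms are controlled by $\|f\|_{B_{p,q_1}^{s_1}(\R^n)}$ for every $N$. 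A standard embedding of smooth compactly supported functions into $B_{p,q_2}^{s_2}(\R^n)$ (by dominating the Besov-Lipschitz norm by a sufficiently high-order $\mathcal{C}^N$ seminorm, uniformly within a fixed compact support) then completes the proof.
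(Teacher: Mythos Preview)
Your argument is correct and takes a genuinely different, more direct route than the paper. The paper first invokes the phase-reduction Lemma~\ref{phase reduction lem} together with the diffeomorphism invariance of Besov--Lipschitz spaces (Theorem~\ref{invariance thm}) to reduce to phases of the form $x\cdot\xi+\theta(x,\xi)$ with $\theta\in\Phi^1$; it then analyses each piece $\psi_j(D)T_{a_0}^\varphi f$ by integration by parts, combines Lemma~\ref{lem:David-Wulf} with the Peetre-type maximal estimate of Lemma~\ref{grafakos lemma 1}, and only at the end appeals to Bernstein (Lemma~\ref{lem:bernstein}). You bypass the phase reduction and the maximal-function step entirely: applying Lemma~\ref{lem:David-Wulf} directly to $b_\gamma(x,\xi)=\partial_x^\gamma\bigl(a(x,\xi)\psi_0(\xi)e^{i\varphi(x,\xi)}\bigr)$ yields $\jap{y}^{-n-\mu}$ decay of the kernel, which together with Bernstein on $\psi_0(D)f$ gives control of every $\mathcal C^N$ seminorm of the (compactly supported) output. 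Your observation that non-degeneracy is never used is correct --- in the paper's proof it enters only through the change of variables $t(x)=\nabla_\xi\varphi(x,\zeta)$. What the paper's longer argument buys is reusability: the estimate \eqref{pointwise estimate for LP piece} and the convolution structure feed directly into the global case (Proposition~\ref{prop:low2 BL}), where the compact-support shortcut is unavailable.

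One small slip: with $K(x,y)=\int a(x,\xi)\psi_0(\xi)e^{i\varphi(x,\xi)-iy\cdot\xi}\ddd\xi$ and $g=\psi_0(D)f$ you would get $\int K(x,y)g(y)\dd y=T^{\varphi}_{a\psi_0^2}f$, not $T_{a_0}^\varphi f$. The fix is immediate --- take $g=\Psi_0(D)f$ with a fattened cutoff $\Psi_0\equiv 1$ on $\supp\psi_0$ (as in Definition~\ref{def:LP}); then $\widehat g$ is still compactly supported, Bernstein still applies, and $\|\Psi_0(D)f\|_{L^p}$ is an admissible replacement for the $j=0$ Littlewood--Paley piece.
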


\begin{proof}
First we use Lemma \ref{phase reduction lem} to reduce the operator to finite sums of operators of the form
$$\int_{\R^n} a_0 (x,\xi)\, e^{i\theta(x,\xi)+i\nabla_{\xi}\varphi(x,\zeta)\cdot\xi}\, \widehat{u}(\xi) \, \ddd\xi$$
where $\zeta$ is a point on the unit sphere $\mathbb{S}^{n-1}$, $\theta(x,\xi)\in \Phi^{1},$ and $a_{0}(x,\xi)\in S^{m}(\R^n)$ is localised in the $\xi$ variable around the point $\zeta$. Then observe that if $t(x)= \nabla_\xi \varphi(x,\zeta)$, then due to the SND condition on the phase, $t(x)$ is a global diffeomorphism and the Jacobian matrix of $t(x)$, $Dt(x)=\Big(\partial^2_{x_{j}\xi_{k}} \varphi (x,\zeta)\Big),$ has bounded entries (by the $\varphi\in \Phi^2$ assumption) and hence $\abs{\det Dt(x)}\lesssim 1$.\\

This enables us to use the invariance of Besov-Lipschitz spaces under diffeomorphisms (Theorem \ref{invariance thm}) to reduce the proof of the proposition, to the case of operators $T_{a_0}^{\varphi}$ with $a_0\in S^m$ and $\varphi(x,\xi)=x\cdot\xi+\theta(x,\xi)$ with $\theta \in \Phi^1$.\\

Without loss of generality we can assume that $f=\chi(D) f$ where $\chi$ is a smooth cut-off function that is equal to one on the support of $\psi_0$. Define the self-adjoint operators
\begin{equation*}
L_\xi := 1-\Delta_\xi \text{ and } L_y := 1-\Delta_y,
\end{equation*}
and note that
\begin{equation*}
\jap{\xi}^{-2} L_y \,e^{i( x-y)\cdot\xi } =  \jap{x-y}^{-2} L_\xi\, e^{i( x-y)\cdot\xi } =e^{i( x-y)\cdot\xi }
\end{equation*}
Take integers $\displaystyle N_1>\frac {s_2+n}2$ and $\displaystyle N_2>\frac{n}{2p}$. Integrating by parts, we have
\begin{equation}\label{eq:lowfreq3}
\begin{split}
&\psi_j(D)T_{a_0}^{\varphi}f(x) = \iint_{\R^{n}\times \R^n } e^{i( x-y)\cdot\xi } \psi_j(\xi)T_{a_0}^\varphi f(y)\dd y \ddd \xi  \\ = & \iint_{\R^{n}\times \R^n}  \jap{\xi}^{-2N_1} L_y^{N_1} \left (\jap{x-y}^{-2N_2} L_\xi^{N_2} e^{i( x-y)\cdot\xi }\right ) \psi_j(\xi)T_{a_0}^\varphi f(y)\dd y \ddd \xi  \\ =&
\iint_{\R^{n}\times \R^n }     e^{i( x-y)\cdot \xi }   L_\xi^{N_2} \left (\jap{\xi}^{-2N_1}{\psi_j(\xi)}\right ) \jap{x-y}^{-2N_2} L_y^{N_1} T_{a_0}^\varphi  f(y)\dd y \ddd \xi.
\end{split}
\end{equation}
Since $\psi_j$ is supported on an annulus of size $2^j$ one has
\begin{equation}\label{eq:lowfreq4}
\begin{split}
\int_{\R^n} \left | L_\xi^{N_2} \jap{\xi}^{-2N_1}{\psi_j(\xi)} \right | \ddd \xi &\lesssim \sum_{|\alpha|\leq 2N_2} \int_{|\xi| \sim 2^j}  \left |\partial_\xi^\alpha \Big (\la \xi \ra ^{-2N_1}\psi_j(\xi) \Big ) \right |\ddd \xi
\\  &\lesssim 2^{jn} \sum_{|\alpha|\leq 2N_2} 2^{-j(2N_1+|\alpha|)} \lesssim  2^{j(n-2N_1)}
\end{split}
\end{equation}
Also, applying Leibniz's and Fa\`a di Bruno's formulae we have that
\begin{equation} \label{eq:lowfreq5}
\begin{split}
T_{b_{0}}^\varphi  f(y):\! &= L_y^{N_1} T_{a_0}^\varphi f(y) = \int_{\R^n} L_y^{N_1} \Big(a(y,\eta) e^{i\varphi(y,\eta)}\Big)\psi_0(\eta)\, \widehat f(\eta) \ddd   \eta \\ &= \int_{\R^n}
 b_{0} (y,\eta)\, e^{i\varphi(y,\eta)}\, \,\widehat{f}(\eta) \ddd \eta,
\end{split}
\end{equation}
with $$b_{0} (y,\eta) := \sum_{|\alpha|\leq 2N_1 }\sum_{1\leq |\beta|\leq 2N_1 } \sum_{l\leq N_1}  C_{\alpha,\beta, l} \Big (\partial_y^\alpha  a(y,\eta)\Big ) \Big (\partial_y^\beta \varphi(y,\eta) \Big)^l \psi_0(\eta).$$
Observe that the assumption on the phase and the mean-value theorem yield \linebreak$\abs{\partial_y^\beta \varphi(y,\eta)}= \abs{\partial_y^\beta \varphi(y,\eta)-\partial_y^\beta \varphi(y,0)}\lesssim |\eta|,$ for $|\eta|\neq 0$ and $|\beta|\geq 1.$
Thus $T^\varphi_{b_{0}} $ is the same type of FIO as $T^\varphi_a$, and we have
\begin{equation}\label{pointwise estimate for LP piece}
\left |\psi_j(D)T_{a_0}^\varphi f(x) \right | \lesssim
2^{j(n-2N_1)} \Big (\la \cdot \ra ^{-2N_2} * \left |T_{b_{0}} ^\varphi f \right | \Big )(x).
\end{equation}

Now using Lemma \ref{lem:David-Wulf} with $b(x,\xi)=b_{0}(x,\xi)\,e^{i\varphi(x,\xi)-ix\cdot\xi}= b_{0}(x,\xi)\,e^{i\theta(x,\xi)}$, and recalling that $\theta(x,\xi)\in \Phi^{1},$ we can see that the kernel of $T^\varphi_{b_{0}}$ satisfies the estimate
\begin{equation*}
|K(x,y)| \lesssim \la x-y\ra^{-n-\varepsilon}.
\end{equation*}
Therefore, since $f$ is frequency localised, an application of Lemma \ref{grafakos lemma 1} and the fact that $b_{0}$ is compactly supported yield the pointwise estimate
\begin{equation}\label{pointwise Tb}
|T_{b_{0}}^\varphi f (y)|\lesssim \chi_{\mathcal{K}} (y)\, \Big( M \left (|f|^r \right)\Big)^{\frac{1}{r}}(y),
\end{equation}
for $\displaystyle r>\frac n{n+1}$, where $\displaystyle \mathcal{K}=\supp_{y}b_{0}(y,\xi)$.\\

Hence, \eqref{pointwise estimate for LP piece}, \eqref{pointwise Tb} and Peetre's inequality yield
\begin{equation}\label{jaevla namn}
\begin{split}
\left |\psi_j(D)T_{a_0}^\varphi f(x) \right |
&\lesssim 2^{j(n-2N_1)}  \brkt{\la \cdot \ra ^{-2N_2} * \chi_{\mathcal{K}}\,\Big( M \left (|f|^r \right)\Big)^{\frac{1}{r}}}(x)\\&\lesssim 2^{j(n-2N_1)} \la x \ra ^{-2N_2}  \int_{\mathcal{K}} \Big( M \left (|f|^r \right)\Big)^{\frac{1}{r}}(y)\, \dd y.
\end{split}
\end{equation}
Now taking the $L^p$ norm, choosing $N_2$ large enough, using the $L^\infty$ boundedness of the Hardy-Littlewood maximal operator, and finally using Lemma \ref{lem:bernstein}, we obtain for $0<p\leq \infty$
\begin{equation}\label{jaevla namn 2}
\begin{split}
\Vert \psi_j(D)T_{a_0}^\varphi f(x) \Vert_{L^{p}(\R^n)}
 &\lesssim  2^{j(n-2N_1)}\Vert |f|^r \Vert_{L^{\infty}(\R^n)}^{\frac{1}{r}}\lesssim 2^{j(n-2N_1)} \norm{f}_{L^\infty(\R^n)}\\ &\lesssim 2^{j(n-2N_1)} \norm{f}_{L^p(\R^n)}.
\end{split}
\end{equation}
Thus \eqref{jaevla namn 2} yields that
\begin{equation*}
\begin{split}
\norm{T_{a_0}^\varphi f}_{B_{p,q_2}^{s_2}(\R^n)} &= \left (\sum_{j=0}^\infty 2^{js_2q_2}\norm{\psi_j(D)T_{a_0}^\varphi f}_{L^p(\R^n)}^{q_2} \right )^{\frac 1{q_2}} \\ &\lesssim \left (\sum_{j=0}^\infty 2^{jq_2(s_2+n-2N_1)} \norm{f}_{L^{p}(\R^n)}^{q_2} \right )^{\frac 1{q_2}} \\ & =  \norm{f}_{L^{p}(\R^n)}\left (\sum_{j=0}^\infty 2^{jq_2(s_2+n-2N_1)} \right )^{\frac 1{q_2}}\\& \lesssim \norm{ f}_{L^{p}(\R^n)}\lesssim \norm{ f}_{B_{p,q_1}^{s_1}(\R^n)}.
\end{split}
\end{equation*}
\end{proof}
Now we state and prove the global boundedness of FIOs with frequency localised amplitudes on Besov-Lipschitz spaces.
\begin{prop}[Global boundedness]\label{prop:low2 BL}
Let $a(x,\xi)\in S^m(\R^n)$ and $\varphi (x,\xi)\in \Phi^2$ and verifies the $\mathrm{SND}$ condition. Then $T_{a_0}^\varphi :B_{p,q_1}^{s_1}(\R^n)\to B_{p,q_2}^{s_2}(\R^n),$ for any \linebreak$s_1, s_2\in (-\infty, \infty)$, $q_1, q_2 \in (0,\infty]$ and $\displaystyle p\in \left (\frac  n{n+1},\infty \right ]$.
\end{prop}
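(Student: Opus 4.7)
The plan is to adapt the argument of Proposition \ref{prop:low1 BL}, replacing the compact-support reduction in $x$ by a global pointwise kernel composition followed by a Peetre-maximal-function estimate. First, apply Lemma \ref{phase reduction lem} to decompose $T_{a_0}^\varphi$ into a finite sum of operators whose phase has the form $\theta(x,\xi) + \nabla_\xi\varphi(x,\zeta)\cdot\xi$ with $\theta \in \Phi^1$ and $\zeta \in \mathbb{S}^{n-1}$. The SND and $\Phi^2$ hypotheses on $\varphi$ guarantee that $t(x) := \nabla_\xi\varphi(x,\zeta)$ is a global diffeomorphism of $\R^n$ with $\abs{\det Dt(x)} \gtrsim 1$ and bounded derivatives of all orders, so Theorem \ref{invariance thm} reduces matters to the case $\varphi(x,\xi) = x\cdot\xi + \theta(x,\xi)$ with $\theta \in \Phi^1$. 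Since the factor $\psi_0(\xi)$ in $a_0$ annihilates the high-frequency part of $f$, one may assume without loss of generality that $f = \chi(D)f$ for a smooth cutoff $\chi$ equal to $1$ on $\supp\psi_0$.

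Second, the integration-by-parts identity from the proof of Proposition \ref{prop:low1 BL} carries over verbatim and produces
\begin{equation*}
\abs{\psi_j(D) T_{a_0}^\varphi f(x)} \lesssim 2^{j(n - 2N_1)} \int_{\R^n} \jap{x-y}^{-2N_2}\, \abs{T_{b_0}^\varphi f(y)}\, \dd y,
\end{equation*}
where $T_{b_0}^\varphi = L_y^{N_1}T_{a_0}^\varphi$ is an FIO with amplitude $b_0 \in S^m$ of the same structural type as $a_0$. Applying Lemma \ref{lem:David-Wulf} to $b_0(y,\xi)e^{i\theta(y,\xi)}$ yields the global kernel estimate $\abs{K_{b_0}(y,z)} \lesssim \jap{y-z}^{-n-\mu}$ for any $\mu \in [0,1)$. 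Combining the two bounds and using the standard convolution estimate $\jap{\cdot}^{-2N_2} * \jap{\cdot}^{-n-\mu} \lesssim \jap{\cdot}^{-n-\mu}$, valid for $2N_2$ sufficiently large, one obtains
\begin{equation*}
\abs{\psi_j(D) T_{a_0}^\varphi f(x)} \lesssim 2^{j(n - 2N_1)}\bigl(\jap{\cdot}^{-n-\mu} * \abs{f}\bigr)(x).
\end{equation*}

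The final step is to take $L^p$ norms, which is subtle in the quasi-Banach regime $p \leq 1$. Here the frequency localisation of $f$ becomes essential: Lemma \ref{grafakos lemma 1} with $\rho = n+\mu$ and $r \geq n/(n+\mu)$ gives $(\jap{\cdot}^{-n-\mu} * \abs{f})(x) \lesssim [M(\abs{f}^r)]^{1/r}(x)$, after which the Hardy--Littlewood maximal inequality on $L^{p/r}$ delivers $\norm{\psi_j(D) T_{a_0}^\varphi f}_{L^p} \lesssim 2^{j(n - 2N_1)}\norm{f}_{L^p}$, provided $r < p$. The hypothesis $p > n/(n+1)$ is precisely what allows one to pick $\mu \in (0,1)$ and $r \in [n/(n+\mu), p)$ simultaneously. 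Choosing $N_1$ large enough so that $\sum_j 2^{jq_2(s_2 + n - 2N_1)}$ converges then yields $\norm{T_{a_0}^\varphi f}_{B^{s_2}_{p,q_2}(\R^n)} \lesssim \norm{f}_{L^p(\R^n)}$, and the final bound $\norm{f}_{L^p} = \norm{\chi(D)f}_{L^p} \lesssim \norm{f}_{B^{s_1}_{p,q_1}(\R^n)}$ follows from the finite Littlewood--Paley expansion $\chi(D)f = \sum_{j\leq k_1}\chi(D)\psi_j(D)f$ together with the trivial estimate $\norm{\psi_j(D)f}_{L^p} \leq 2^{-js_1}\norm{f}_{B^{s_1}_{p,q_1}}$. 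The main obstacle, and what distinguishes the proof from the local case, is that once the compact support of $a(x,\xi)$ in $x$ is removed, the simplification of the convolution $\jap{\cdot}^{-2N_2} * \abs{T_{b_0}^\varphi f}$ via the factor $\chi_\mathcal{K}$ is unavailable. The resolution is to collapse the two kernel estimates pointwise \emph{before} passing to $L^p$, so that Lemma \ref{grafakos lemma 1} can be invoked at the level of $f$ itself, which is frequency localised, rather than on $T_{b_0}^\varphi f$, which is not.
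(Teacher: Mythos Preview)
Your proof is correct and follows essentially the same route as the paper's own argument: phase reduction via Lemma \ref{phase reduction lem} and diffeomorphism invariance, the integration-by-parts identity from Proposition \ref{prop:low1 BL}, the kernel bound from Lemma \ref{lem:David-Wulf}, collapsing the two convolutions into a single $\jap{\cdot}^{-n-\mu}$ kernel, and then invoking Lemma \ref{grafakos lemma 1} on the frequency-localised $f$ to reach the maximal function. Your identification of the key departure from the local case---that one must compose the kernel bounds \emph{before} appealing to Lemma \ref{grafakos lemma 1}, because $T_{b_0}^\varphi f$ is no longer compactly supported---is exactly the point, and your explanation of how the constraint $p>\frac{n}{n+1}$ arises from the simultaneous requirements $r\geq n/(n+\mu)$ and $r<p$ with $\mu\in(0,1)$ is in fact more explicit than the paper's.
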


\begin{proof}

The proof differs only marginally from that of Proposition \ref{prop:low1 BL}. First we once again without loss of generality assume that $f=\chi(D) f$ where $\chi$ is a smooth cut-off function that is equal to one on the support of $\psi_0$. Then considering $\psi_j(D)T_{a_0}^{\varphi}f(x)$ as an oscillatory integral, we can deduce that the integral representation \eqref{eq:lowfreq3} is valid for $\psi_j(D)T_{a_0}^{\varphi}$  even in the current case.  Then once again using Lemma \ref{lem:David-Wulf} with $b(x,\xi)=b_{0}(x,\xi)\,e^{i\theta(x,\xi)}$ ($b_{0}$ is as in Proposition \ref{prop:low1 BL}) and the fact that  $\theta \in \Phi^1$, we can see that the kernel of $T^\varphi_{b_{0}}$ satisfies the estimate
\begin{equation*}
|K(x,y)| \lesssim \la x-y\ra^{-n-\varepsilon}.
\end{equation*}

Moreover, from Lemma \ref{grafakos lemma 1}, it follows that for $\displaystyle r>\frac{n}{n+1}$

\begin{equation*}
\begin{split}
\left |\psi_j(D)T_{a_0}^\varphi f(x) \right |
&\lesssim 2^{j(n-2N_1)} \int_{\R^n} \brkt{\int_{\R^n} \langle x-z\rangle^{-2N_2} \langle z-y\rangle^{-n-\varepsilon} \, \dd z}\, |f(y)|\, \dd y\\&\lesssim 2^{j(n-2N_1)} \int_{\R^n} \langle x-y\rangle^{-n-\varepsilon} \, |f(y)|\, \dd y\lesssim  2^{j(n-2N_1)} \Big( M \left (|f|^r \right)\Big)^{\frac{1}{r}}(x).
\end{split}
\end{equation*}

This yields that for $r<p\leq \infty$ one has

\begin{equation*}
\Vert \psi_j(D)T_{a_0}^\varphi f(x) \Vert_{L^{p}(\R^n)}
 \lesssim 2^{j(n-2N_1)} \norm{f}_{L^p(\R^n)}.
\end{equation*}
 and the proof can be concluded following the same argument as in the proof of Proposition \ref{prop:low1 BL}.
\end{proof}

\subsection{Local and Global boundedness of FIOs on Besov-Lipschitz spaces}\label{section.local and global besov for fios}
\noindent In this section we state and prove the local and global boundedness of Fourier integral operators on Besov-Lipschitz spaces. In view of the results of the previous sections, what remains to do is to basically put all the bits and pieces (i.e. the high and low frequency results for various cases) together. As usual, 	$T_a^\varphi$ denotes an FIO given by \eqref{definition of FIO}.\\

\noindent Our main local and global boundedness results are
\begin{thm}\label{thm:local and global BL}Let $a (x,\xi)\in S^{m}(\R^n)$, $p\in (0,\infty]$ and $\displaystyle m_c(p) := -(n-1)\abs {\frac 1p -\frac 12}.$ Assume also that $\varphi(x,\xi) \in\mathcal{C}^{\infty}(\R^n \times\R^n \setminus\{0\}),$ is positively homogeneous of degree one in $\xi$. Then under these assumptions, the following results hold true$:$

\begin{enumerate}
\item[\emph{(i)}] If $a(x,\xi)$ has compact support in $x$ and $\varphi(x,\xi)$ is non-degenerate on the support of $a(x,\xi),$
then for any $s\in \R$, $0<p\leq \infty$ and $0<q\leq \infty$
	\begin{equation*}
	T_a^\varphi: B_{p,q}^{s+m-m_c(p)}(\R^n)\rightarrow B_{p,q}^s(\R^n),
	\end{equation*}
	
\item[\emph{(ii)}] If $\varphi (x,\xi)\in \Phi^2$ is $\mathrm{SND}$, then for any $s\in \R$, $\displaystyle \frac{n}{n+1}<p\leq \infty$ and $0<q\leq \infty$
	\begin{equation*}
	T_a^\varphi: B_{p,q}^{s+m-m_c(p)}(\R^n)\rightarrow B_{p,q}^s(\R^n).
	\end{equation*}
	
\noindent In particular taking $m=m_c(p)$ in both cases, we have that
	\begin{equation*}
	T_a^\varphi: B_{p,q}^{s}(\R^n)\rightarrow B_{p,q}^s(\R^n).
	\end{equation*}
\end{enumerate}

\end{thm}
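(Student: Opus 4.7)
The plan is to reduce Theorem \ref{thm:local and global BL} to a decomposition of the amplitude into low- and high-frequency pieces that are precisely the objects handled by the propositions already proved in Sections \ref{section:the global section} and \ref{section:the local section Besov}. Concretely, given $a(x,\xi) \in S^m(\R^n)$, I would write
\[
a(x,\xi) = \psi_0(\xi)\,a(x,\xi) + (1-\psi_0(\xi))\,a(x,\xi) =: a_0(x,\xi) + \sigma(x,\xi),
\]
so that $T_a^\varphi = T_{a_0}^\varphi + T_\sigma^\varphi$, where $\sigma \in S^m(\R^n)$ vanishes in a neighbourhood of $\xi=0$ and $a_0 \in S^m(\R^n)$ has compactly supported amplitude in $\xi$.

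For the high-frequency piece $T_\sigma^\varphi$, Proposition \ref{prop:high} directly yields
\[
T_\sigma^\varphi : B_{p,q}^{s+m-m_c(p)}(\R^n) \to B_{p,q}^s(\R^n)
\]
for any $s \in \R$ and any $0<p,q\leq\infty$, in both the local case (when $a$, hence $\sigma$, has compact $x$-support and $\varphi$ is non-degenerate on that support) and the global case (when $\varphi \in \Phi^2$ is SND). Note that Proposition \ref{prop:high} imposes no restriction on $p$, so the dichotomy between (i) and (ii) of the theorem arises only from the low-frequency contribution.

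For the low-frequency piece $T_{a_0}^\varphi$, I apply in case (i) Proposition \ref{prop:low1 BL} with $s_1 = s+m-m_c(p)$, $s_2 = s$ and $q_1 = q_2 = q$, which gives the required $B_{p,q}^{s+m-m_c(p)} \to B_{p,q}^s$ boundedness for all $p,q \in (0,\infty]$; in case (ii) I apply Proposition \ref{prop:low2 BL} under the SND hypothesis, which produces the same mapping property but now only for $p \in (n/(n+1),\infty]$ and $q \in (0,\infty]$. This is precisely where the restriction $p > n/(n+1)$ in part (ii) of the theorem enters, and it is the only place in the argument where the two cases diverge.

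Finally, to combine the two pieces I invoke the quasi-triangle inequality
\[
\|f+g\|_{B_{p,q}^s(\R^n)} \leq 2^{C}\bigl(\|f\|_{B_{p,q}^s(\R^n)} + \|g\|_{B_{p,q}^s(\R^n)}\bigr),
\]
with $C = C(p,q)$, which is a direct consequence of \eqref{quasibanch triangel} applied level-by-level in the Littlewood-Paley definition of the $B_{p,q}^s$ (quasi-)norm. Since the bound is nonempty only for a single pair $(p,q)$ at a time, this yields
\[
\|T_a^\varphi f\|_{B_{p,q}^s(\R^n)} \lesssim \|T_\sigma^\varphi f\|_{B_{p,q}^s(\R^n)} + \|T_{a_0}^\varphi f\|_{B_{p,q}^s(\R^n)} \lesssim \|f\|_{B_{p,q}^{s+m-m_c(p)}(\R^n)},
\]
giving both (i) and (ii). Setting $m = m_c(p)$ at the end produces the $B_{p,q}^s \to B_{p,q}^s$ statement. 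There is essentially no obstacle left in this final step: all the analytic difficulty has already been absorbed into the kernel estimates of Section \ref{SSS section}, the parameter-dependent composition formula of Theorem \ref{prop:monsteriosity}, and the low-frequency decay encoded by Lemmas \ref{lem:David-Wulf} and \ref{grafakos lemma 1}. The only point requiring a little care is making sure that when $q<1$ the quasi-norm manipulations are done with the correct constants, but this is already built into \eqref{quasibanch triangel}.
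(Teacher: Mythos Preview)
Your proposal is correct and follows essentially the same approach as the paper: split $T_a^\varphi$ into a low-frequency piece $T_{a_0}^\varphi$ and a high-frequency piece $T_\sigma^\varphi$, then invoke Proposition~\ref{prop:high} for the latter and Proposition~\ref{prop:low1 BL} (case (i)) or Proposition~\ref{prop:low2 BL} (case (ii)) for the former with $s_1=s+m-m_c(p)$, $s_2=s$, $q_1=q_2=q$. The paper's proof is in fact slightly terser than yours, omitting the explicit mention of the quasi-triangle inequality, but the argument is the same.
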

\begin{proof}
Once again we split $T_a^\varphi$ into a low and a high frequency part. Indeed, take $\psi_0$ as in Definition \ref{def:LP}, i.e.
	\begin{equation*}
    \begin{split}
	T_a^\varphi f(x) &=\int_{\R^n} \psi_0(\xi) a(x,\xi)  e^{i\varphi(x,\xi)} \widehat{f}(\xi)\ddd \xi \\ &+ \int_{\R^n} (1-\psi_0(\xi)) a(x,\xi)  e^{i\varphi(x,\xi)} \widehat{f}(\xi)\ddd \xi  \\ &=: T_1 f (x) + T_2 f(x).
    \end{split}
	\end{equation*}
Now for (i) we use Proposition \ref{prop:low1 BL} and for (ii) Proposition \ref{prop:low2 BL} (taking \linebreak $s_1=s+m-m_c(p)$, $s_2=s$ and $q_1=q_2=q$). These yield that \linebreak $T_1: B_{p,q}^{s+m-m_c(p)}(\R^n)\rightarrow B_{p,q}^s(\R^n).$ For $T_2f$ Proposition \ref{prop:high} yields that \linebreak $T_2: B_{p,q}^{s+m-m_c(p)}(\R^n)\rightarrow B_{p,q}^s(\R^n).$
\end{proof}\hspace*{1cm}

\section{Boundedness of FIOs on Triebel-Lizorkin spaces}\label{boundedness on Triebel}

In this section we investigate the boundedness of FIO's on Triebel-Lizorkin spaces spaces $F^s_{p,q}(\R^n)$ for $-\infty <s<\infty,$ $0<p\leq \infty$ and $0<q\leq \infty.$ Some of the results that we drive are based on the Besov-Lipschitz results which were obtained in the previous sections, a couple are obtained by interpolation, and some through direct methods. Once again, both local and global cases will be treated here.
We start with the following result which is sharp, up to the end point.

\begin{thm}\label{thm:local and global nonendpoint TL}
Let $a (x,\xi)\in S^{m}(\R^n)$ and $\displaystyle m_c(p) := -(n-1)\abs {\frac 1p -\frac 12}.$ Assume also that $\varphi(x,\xi) \in\mathcal{C}^{\infty}(\R^n \times\R^n \setminus\{0\}),$ is positively homogeneous of degree one in $\xi$. If $m<m_c(p)$, then in either of the following cases, we have that $T_a^\varphi$ is bounded from $F_{p,q}^{s}(\R^n)$ to $F_{p,q}^s(\R^n).$
\begin{enumerate}
\item[\emph{(i)}] $s\in \R$, $0<p< \infty$ and $0<q\leq \infty$\emph{;}  $a(x,\xi)$ has compact support in $x$, and $\varphi$ is non-degenerate on the support of $a(x,\xi),$

\item[\emph{(ii)}] $s\in \R$, $\displaystyle \frac{n}{n+1}<p< \infty$, $0<q\leq \infty,$ $\varphi (x,\xi)\in \Phi^2$ is $\mathrm{SND}.$
\end{enumerate}

\end{thm}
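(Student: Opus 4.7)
\textbf{Proof plan for Theorem \ref{thm:local and global nonendpoint TL}.} The plan is to deduce the Triebel--Lizorkin statement from the already-established Besov--Lipschitz result \emph{(}Theorem \ref{thm:local and global BL}\emph{)} by using the fact that $F_{p,p}^{s}(\R^n)=B_{p,p}^{s}(\R^n)$ \emph{(}equation \eqref{equality of TL and BL}\emph{)} and the strict embedding \eqref{embedding of TL}. The strict inequality $m<m_c(p)$ provides the extra margin $\delta:=m_c(p)-m>0$ needed to absorb a small loss of smoothness in the embeddings.

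More precisely, fix $\varepsilon>0$ with $\varepsilon<\delta/2$. First I would use \eqref{embedding of TL} to embed
\[
F^{s}_{p,q}(\R^n)\hookrightarrow F^{s-\varepsilon}_{p,p}(\R^n),
\]
which is licit since $0<p<\infty$ in both cases (i) and (ii). Identifying $F^{s-\varepsilon}_{p,p}(\R^n)=B^{s-\varepsilon}_{p,p}(\R^n)$ via \eqref{equality of TL and BL}, I then apply Theorem \ref{thm:local and global BL} (part (i) for the local case, part (ii) for the global case, noting $p\in(n/(n+1),\infty)$ is inside the admissible range) with the choice of smoothness index $t:=s-\varepsilon-(m-m_c(p))=s-\varepsilon+\delta$, to obtain
\[
T_a^\varphi:B^{s-\varepsilon+\delta}_{p,p}(\R^n)\longrightarrow B^{s-\varepsilon}_{p,p}(\R^n).
\]
Wait — I should be slightly careful: one should read Theorem \ref{thm:local and global BL} as $T_a^\varphi:B^{t+m-m_c(p)}_{p,p}\to B^{t}_{p,p}$, so to land at $B^{s-\varepsilon}_{p,p}$ one takes $t=s-\varepsilon$ and the input space is $B^{s-\varepsilon+m-m_c(p)}_{p,p}=B^{s-\varepsilon-\delta}_{p,p}$. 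Since $-\varepsilon-\delta<-\varepsilon$, we in particular have the continuous inclusion $B^{s-\varepsilon}_{p,p}(\R^n)\hookrightarrow B^{s-\varepsilon-\delta}_{p,p}(\R^n)$ (embedding in the smoothness index), so the composition
\[
F^{s}_{p,q}(\R^n)\hookrightarrow B^{s-\varepsilon}_{p,p}(\R^n)\hookrightarrow B^{s-\varepsilon-\delta}_{p,p}(\R^n)\xrightarrow{\;T_a^\varphi\;}B^{s-\varepsilon}_{p,p}(\R^n)
\]
is well defined.

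To finish, I would invert the role of the embedding at the output: since $B^{s-\varepsilon}_{p,p}(\R^n)=F^{s-\varepsilon}_{p,p}(\R^n)$, one more application of \eqref{embedding of TL} with the gain $\varepsilon>0$ yields
\[
F^{s-\varepsilon}_{p,p}(\R^n)\hookrightarrow F^{s-2\varepsilon}_{p,q}(\R^n),
\]
which is not quite what is claimed. To fix this, the symmetric choice is to run the scheme with $s$ replaced by $s+\varepsilon$ on the target side: start from $F^{s}_{p,q}\hookrightarrow F^{s-\varepsilon}_{p,p}=B^{s-\varepsilon}_{p,p}$, apply Besov boundedness to land in $B^{s-\varepsilon+\delta}_{p,p}=F^{s-\varepsilon+\delta}_{p,p}$, and then embed $F^{s-\varepsilon+\delta}_{p,p}\hookrightarrow F^{s}_{p,q}$ via \eqref{embedding of TL}, which is valid because $s-\varepsilon+\delta>s$ by the choice $\varepsilon<\delta$. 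This chain
\[
F^{s}_{p,q}(\R^n)\hookrightarrow B^{s-\varepsilon}_{p,p}(\R^n)\xrightarrow{\;T_a^\varphi\;}B^{s-\varepsilon+\delta}_{p,p}(\R^n)=F^{s-\varepsilon+\delta}_{p,p}(\R^n)\hookrightarrow F^{s}_{p,q}(\R^n)
\]
delivers the desired boundedness.

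There is no real obstacle beyond bookkeeping of embeddings: the hard analytic work has already been done in Theorem \ref{thm:local and global BL}, and the strict inequality $m<m_c(p)$ is precisely what allows one to trade the $q$-index freedom in $F^{s}_{p,q}$ for a slight loss and a slight gain of smoothness at either end. The only point requiring mild attention is that \eqref{embedding of TL} is stated for $0<p<\infty$, which matches the restriction $p<\infty$ in both (i) and (ii); in the local setting (i) the full range $0<p<\infty$ is allowed because the Besov result of Theorem \ref{thm:local and global BL}(i) already holds for $p\in(0,\infty]$.
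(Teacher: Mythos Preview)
Your final chain
\[
F^{s}_{p,q}(\R^n)\hookrightarrow B^{s-\varepsilon}_{p,p}(\R^n)\xrightarrow{\;T_a^\varphi\;}B^{s-\varepsilon+\delta}_{p,p}(\R^n)=F^{s-\varepsilon+\delta}_{p,p}(\R^n)\hookrightarrow F^{s}_{p,q}(\R^n)
\]
is correct and is exactly the paper's approach: the paper also sandwiches between $F^{\cdot}_{p,p}=B^{\cdot}_{p,p}$ via \eqref{embedding of TL} and invokes Theorem \ref{thm:local and global BL}, the only cosmetic difference being that the paper encodes the gain $\delta$ through a Bessel potential $(1-\Delta)^{\varepsilon}$ composed with $T_a^\varphi$ rather than reading it off directly from the Besov mapping $B^{t}_{p,p}\to B^{t+\delta}_{p,p}$ as you do.
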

\begin{proof}
	Take $\varepsilon>0$ in such a way that $m+2\varepsilon \leq m_c(p)$. Then using the embedding \eqref{embedding of TL}, equality \eqref{equality of TL and BL}, Theorem \ref{thm:local and global BL}, and finally the fact that $(1-\Delta)^{-\frac \varepsilon 2}$ is an isomorphism from $F_{p,q}^{s}(\R^n)$  to $F_{p,q}^{s+\varepsilon}(\R^n)$,
we have that
\m{
\Vert T_a^\varphi f\Vert_{F^{s}_{p,q}(\R^n)}&=\Vert T_a^\varphi (1-\Delta)^{\varepsilon}(1-\Delta)^{-\varepsilon}f\Vert_{F^s_{p,q}(\R^n)}\\ &\lesssim \Vert T_a^\varphi (1-\Delta)^{\varepsilon}(1-\Delta)^{-\varepsilon}f\Vert_{F^{s+\varepsilon}_{p,p}(\R^n)}\lesssim \Vert (1-\Delta)^{-\varepsilon}f\Vert_{F^{s+\varepsilon}_{p,p}(\R^n)} \\ & \lesssim \Vert (1-\Delta)^{-\varepsilon}f\Vert_{F^{s+2\varepsilon}_{p,q}(\R^n)}\lesssim \Vert f\Vert_{F^{s}_{p,q}(\R^n)}.
}
\end{proof}
But indeed this result can be extended to the endpoint $m=m_c(p)$ if $q=2$, at least for $0<p\leq 2$. This, in the local case, i.e. the case of amplitudes with compact spatial support $p$ could be taken in the interval $(0,\infty]$. However, with the conditions of Theorem \ref{thm:local and global nonendpoint TL} above, one can prove a global version of the boundedness of FIOs on $F^s_{p,2}$, whose proof is based on the techniques developed by Seeger-Sogge-Stein \cite{SSS} and M. Ruzhansky and M. Sugimoto \cite{Ruzhansky-Sugimoto}. The long and rather technical proof will occupy the next subsection.\\

\subsection{Triebel-Lizorkin boundedness of the high frequency portion of FIOs}\label{section:the local section_high_Triebel}
First we consider the boundedness of FIOs with high frequency amplitudes on Triebel-Lizorkin spaces $F_{p,2}^0(\R^n)$ for $0<p\leq 1$. As was mentioned in Definition \ref{def:Triebel}, $F_{p,2}^{0}(\R^n)= h^p(\R^n)$ is the local Hardy space of Goldberg \cite{Goldberg}, and we shall  use the atomic decomposition of these spaces in order to carry out our agenda. The idea behind the proof of the following proposition was contained in an unpublished manuscript of the second and the third authors of this paper and D. Rule \cite{multilinearfio}, which dealt with FIOs with phase functions of the form $\phi(\xi)+ x\cdot \xi$ where $\phi\in \mathcal{C}^{\infty}(\mathbb{R}^n \setminus 0)$ is positively homogeneous of degree $1$. In this paper we have generalised that result to the case of SND phase functions which belong to $\Phi^2$.   

\begin{prop}\label{Thm:high TL} Let $\psi_0$ be as cut-off function as in \emph{Definition \ref{def:LP}}, $p\in (0,\infty]$ and $m_c(p)$ the critical order defined in \eqref{eq:criticaldecay}. Assume that  $a\in S^{m_c(p)}(\R^n)$ and $\varphi\in \Phi^2$ is a phase function that verifies the $\mathrm{SND}$ condition \eqref{SND}. Then for $n\geq 2$ the operator $T_{\sigma}^\varphi$ given by \eqref{definition of FIO} with amplitude $\sigma( x, \xi):= (1-\psi_0(\xi))\,a(x,\xi)$ 
satisfies $F_{p,2}^{s}(\R^n)\rightarrow F_{p,2}^s(\R^n),$ for $-\infty<s<\infty$.
\end{prop}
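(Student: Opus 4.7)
The strategy is to first reduce to $s=0$, treat the two endpoint cases $p=2$ and $0<p\le 1$ directly, and then obtain the remaining range of $p$ by complex interpolation and duality. To reduce to $s=0$, note that the Bessel potential $(1-\Delta)^{s/2}$ is an isomorphism between $F^s_{p,2}(\R^n)$ and $F^0_{p,2}(\R^n)$, and that a routine application of Theorem \ref{prop:monsteriosity} (together with Proposition \ref{prop:high} to absorb the lower-order remainder) shows that the conjugate $(1-\Delta)^{s/2}\,T^\varphi_\sigma\,(1-\Delta)^{-s/2}$ is itself a Fourier integral operator with phase $\varphi$ and an amplitude in $S^{m_c(p)}(\R^n)$ modulo lower order. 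It therefore suffices to prove $T^\varphi_\sigma:F^0_{p,2}(\R^n)\to F^0_{p,2}(\R^n)$.

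For $p=2$, one has $F^0_{2,2}(\R^n)=L^2(\R^n)$ and $m_c(2)=0$, so the estimate is a consequence of Proposition \ref{prop:JFA} together with an almost-orthogonality argument in $j$. For $0<p\le 1$, where $F^0_{p,2}(\R^n)=h^p(\R^n)$, I would first apply Lemma \ref{phase reduction lem} and the Ruzhansky--Sugimoto globalisation of Section \ref{RS globalisation} to reduce to a phase of the form $\varphi(x,\xi)=x\cdot\xi+\theta(x,\xi)$ with $\theta\in\Phi^1$, using the diffeomorphism invariance of Triebel--Lizorkin spaces (Theorem \ref{invariance thm}) to undo the accompanying change of variables. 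Then, using the atomic decomposition $f=\sum_k\lambda_k a_k$ of $h^p(\R^n)$, it is enough to show that $\|T^\varphi_\sigma a\|_{h^p(\R^n)}^p\lesssim 1$ uniformly for all $h^p$-atoms $a$ supported in a ball $B=B(\bar y, r)$. Splitting the Littlewood--Paley square function $\left(\sum_j|\psi_j(D)T^\varphi_\sigma a|^2\right)^{1/2}$ into its contributions on the influence set $B^*$ from Definition \ref{def:influenceset} and on its complement $B^{*c}$, the piece over $B^*$ is handled by the Cauchy--Schwarz inequality together with $|B^*|\lesssim r$ and the $L^2$-boundedness established for $p=2$; the piece over $B^{*c}$ is treated by the second dyadic decomposition $\sum_\nu\chi_j^\nu$ and the pointwise estimates of Lemma \ref{SSS estimates for Kj}, with the two cases $2^jr<1$ and $2^j r\ge 1$ giving complementary decay factors in $2^jr$ that can be summed geometrically in $j$ and $\nu$.

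For $p\in(1,2)\cup(2,\infty]$ the bound follows by interpolation and duality. In the range $1<p<2$, complex interpolation between the proven $h^1\to h^1$ bound and the $L^2\to L^2$ bound yields the $L^p=F^0_{p,2}$ estimate. For $p>2$, the adjoint $(T^\varphi_\sigma)^*$ is an FIO of the same type, with phase still in $\Phi^2$ satisfying the SND condition and amplitude of order $m_c(p)=m_c(p')$; the analogue of the $0<p\le 1$ argument applies to $(T^\varphi_\sigma)^*$ (with Lemma \ref{SSS estimates for Kj2} supplying the required kernel estimates), so that $(T^\varphi_\sigma)^*:F^0_{p',2}(\R^n)\to F^0_{p',2}(\R^n)$, and the duality $(F^0_{p',2}(\R^n))^*=F^0_{p,2}(\R^n)$ transfers the bound to $T^\varphi_\sigma$.

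The main obstacle is the quasi-Banach case $0<p\le 1$: one must orchestrate the kernel estimates of Lemma \ref{SSS estimates for Kj} across the full Littlewood--Paley and second-dyadic decomposition so that, after summing over the $O(2^{j(n-1)/2})$ directional pieces $\nu$ and all frequency scales $j$, the geometric decay in $2^jr$ beats the dimensional factor and produces an $r$-independent bound on $B^{*c}$. In particular, atoms supported in large balls ($r>1$) carry no moment condition, so the required decay in $r$ must come entirely from the second-dyadic estimates and the influence-set geometry of Remark \ref{rem:influenceset}.
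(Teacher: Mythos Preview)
Your overall architecture—reduce to $s=0$, treat $0<p\le 1$ by atoms with a $B^*/B^{*c}$ split, and reach the rest of the range by $L^2$, interpolation and duality—matches the paper's. There is, however, a genuine gap in the $B^*$ piece, and a secondary structural issue that the paper handles differently.

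\textbf{The $B^*$ estimate.} You write that the contribution over $B^*$ is handled by Cauchy--Schwarz, $|B^*|\lesssim r$, and the $L^2$-boundedness of $T^\varphi_\sigma$. That is not enough. With only $\|T^\varphi_\sigma a\|_{L^2}\lesssim\|a\|_{L^2}\lesssim r^{n(1/2-1/p)}$ you get
\[
\int_{B^*}\Big(\sum_j|\psi_j(D)T^\varphi_\sigma a|^2\Big)^{p/2}\,dx
\;\lesssim\; |B^*|^{1-p/2}\,\|T^\varphi_\sigma a\|_{L^2}^p
\;\lesssim\; r^{\,1-n+(n-1)p/2},
\]
and the exponent is strictly negative for every $p\le 1$ when $n\ge 2$, so the bound blows up as $r\to 0$. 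The paper exploits the \emph{negative} order $m_c(p)$ of the amplitude: writing $\tfrac12=\tfrac1q+\tfrac{m_c(p)}{n}$ and using Riesz-potential smoothing, one has $T^\varphi_\sigma:L^q\to L^2$ (and, when $m_c(p)\le -n/2$, the Hardy-space analogue $H^q\to L^2$ via Krantz). Feeding $\|a\|_{L^q}\lesssim r^{n(1/q-1/p)}$ into the same H\"older step makes the exponent of $r$ vanish exactly. This is also precisely where the hypothesis $n\ge 2$ enters.

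\textbf{$h^p\to h^p$ directly versus $h^p\to L^p$ then lift.} You aim straight for $\|T^\varphi_\sigma a\|_{h^p}$ by controlling $\psi_j(D)T^\varphi_\sigma a$, but the kernel estimates you cite (Lemma~\ref{SSS estimates for Kj}) are for the pieces $T_j^\nu$ obtained by inserting $\psi_j\chi_j^\nu$ into the \emph{amplitude}; these are not the same as the post-composed operators $\psi_j(D)T^\varphi_\sigma$, and the two do not agree without invoking the composition Theorem~\ref{prop:monsteriosity} level by level. The paper avoids this: it first proves the scalar bound $\|T^\varphi_\sigma a\|_{L^p}\lesssim 1$ using $T^\varphi_\sigma=\sum_j T_j$ and Lemmas~\ref{SSS estimates for Kj}--\ref{SSS estimates for Kj2}, and only afterwards upgrades $h^p\to L^p$ to $h^p\to h^p$ via the Peloso--Secco local Riesz-transform characterisation (so that the lifting reduces to the already-established $h^p\to L^p$ bound for $r^\alpha_\varepsilon(D)T^\varphi_\sigma$, which is again an FIO of the same type by Theorem~\ref{prop:monsteriosity}). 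If you keep your direct square-function route, you must supply this commutation argument explicitly.

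Two smaller points. First, for atoms with $r>1$ there is no moment condition, and the paper's local argument here simply uses compact $x$-support of the amplitude together with H\"older; the passage to non-compactly-supported amplitudes is carried out as a separate globalisation step via Lemmas~\ref{Lem:outside}--\ref{Lem:kernel RuzhSug}, not absorbed into the atomic estimate itself. Second, for $p>2$ you should be explicit that the adjoint argument only delivers $(T^\varphi_\sigma)^*:h^1\to h^1$ directly (Lemma~\ref{SSS estimates for Kj2} is stated at order $m_c(1)$), and the range $1<p'<2$ for the adjoint then comes from interpolation with $L^2$, after which duality gives $2<p\le\infty$ for $T^\varphi_\sigma$.
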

\begin{proof}
We divide the proof in different steps as follows.\\
\begin{enumerate}
\item In Step 1 we consider the case when $s=0$, $0<p\leq 1$, $n\geq 2$ and an $h^p$ atom $a$ with support inside a ball with radius $r\leq 1$. We also assume that the amplitude is compactly supported in the $x$-variable. We show that for, $\Vert T_\sigma^\varphi a\Vert_{L^p(\R^n)} \leq C,$ where the constant $C$ doesn't depend of $a$ and $r$.
\item In Step 2 we assume the same premises as in Step 1 with the only difference that $r\geq1$. Step 1 and 2 will together imply that $T_\sigma^\varphi : h^p(\R^n)\to L^p(\R^n)$ for $0<p\leq 1$.
\item In step 3 we prove that $\Vert (T_\sigma^\varphi)^* a\Vert_{L^1(\R^n)} \leq C,$ for $h^1$ atoms $a$ supported in balls of radii $r<1$.
\item In Step 4 we assume the same premises as in Step 3 with the only difference that $r\geq1$. Step 3 and 4 will together imply that $(T_\sigma^\varphi)^* : h^1(\R^n)\to L^1(\R^n)$.
\item In Step 5 we globalise the local results obtained in the previous steps.
\item In Step 6 we lift the results to $T_\sigma^\varphi :h^p(\R^n) \to h^p(\R^n)$ and \linebreak$(T_\sigma^\varphi)^\ast:h^1(\R^n) \to h^1(\R^n).$
\item We conclude the proof by showing the boundedness of $T_\sigma^\varphi$ on $F_{p,2}^{s}(\R^n)$ for $0<p\leq \infty$ and $-\infty<s<\infty $. 
\end{enumerate}\hspace*{1cm}\\
These steps will conclude the proof.\\

{\bf{Step 1 -- Estimates of $\mathbf{\norm{T_\sigma^\varphi a}_{L^p(\R^n)}}$ when $\mathbf{r\leq 1}$}}

In what follows, let $a(x)$ be an $h^p$ atom supported in a ball $B$ of radius $r\leq 1$ and centre $\overline{y}$. Now split
\begin{equation}\label{eq: spliting the FIO on balls}
  \int_{\mathbb{R}^{n}}|T_\sigma^\varphi a(x)|^{p}\dd x=\int_{B^{*}}|T_\sigma^\varphi a(x)|^{p}\dd x+\int_{B^{*c}}|T_\sigma^\varphi a(x)|^{p}\dd x,
\end{equation}

where $B^{*}$ is as in Definition \ref{def:influenceset}.  H\"older's inequality and the observation in Remark \ref{rem:influenceset} (i) yield that
\begin{equation*}
\int_{B^{*}}|T_\sigma^\varphi a(x)|^{p}\dd x\leq|B^{*}|^{1-\frac p2}\brkt{\int_{B^{*}}|T_\sigma^\varphi  a(x)|^{2}\dd x}^{\frac p2}
\lesssim r^{\brkt{1-\frac p2}}\Vert T_\sigma^\varphi  a\Vert_{L^{2}(\R^n)}^{p}.
\end{equation*}
To analyse the second term on the right hand side of \eqref{eq: spliting the FIO on balls} we proceed as follows.\\

First assume that $\displaystyle -\frac{n}{2}<m_c(p)<0$. Then there exists a $1<q<2$ such that $\displaystyle \frac{1}{2}=\frac{1}{q}+\frac{m_c(p)}{n}$. Observe that it is at this point where the assumption on the dimension $n$ plays a role. Indeed the case $n=1$ cannot satisfy this assumption, as then $m_c(p)=0$. Using the global $L^2 \to L^2$ boundedness of the operator $T_\sigma^\varphi $ and the estimates for Riesz potentials we can deduce that $T_\sigma^\varphi $ is bounded from $L^{q}(\mathbb{R}^{n})$ to $L^{2}(\mathbb{R}^{n})$  and therefore
$$
\Vert T_\sigma^\varphi  a\Vert_{L^{2}(\R^n)}^{p}\lesssim\Vert a\Vert_{L^{q}(\R^n)}^{p}\lesssim |B|^{\frac pq -1}\lesssim r^{n\brkt{\frac pq -1}}.
$$
Thus
$$
\int_{B^{*}}| T_\sigma^\varphi a(x)|^{p}\dd x\lesssim r^{1-\frac p2+n\brkt{\frac pq -1}}.
$$
To see that $\displaystyle r^{1-\frac p2+n\brkt{\frac pq -1}}= 1$, we observe that since $\displaystyle \frac{p}{q}= \frac{p}{2}-\frac{p\, m_c(p)}{n}$ we have
\m{&1-\frac{p}{2}+n\brkt{\frac{p}{q} -1} = 1-\frac{p}{2}+n\brkt{\frac{p}{2}-\frac{p\, m_c(p)}{n}-1}\\ =\; & p\brkt{\frac{1}{p}-\frac{1}{2}+\frac{n}{2}- m_c(p)-\frac{n}{p}}= p\brkt{-(n-1)\brkt{\frac{1}{p}-\frac{1}{2}}- m_c(p) }.}
Now since $\displaystyle m_c(p)= -(n-1)\brkt{\frac{1}{p}-\frac{1}{2}}$, it follows that $\displaystyle r^{1-\frac p2 +n\brkt{\frac pq -1}}= 1$.

If instead $\displaystyle m_c(p)\leq -\frac{n}{2},$ then setting $\displaystyle b=|B|^{\frac 1p- \frac 1q}a$, with $\displaystyle \frac{1}{2}=\frac{1}{q}+\frac{m_c(p)}{n}$ as before (so now $q < p < 1$) we see that $b$ is an $h^q$-atom with the same support as $a$. In fact $b$ becomes an atom in the Hardy space $H^{q}(\mathbb{R}^{n})$, so by the results in \cite{Krantz}*{ Corollary 2.3}, we have that $T_\sigma^\varphi :H^{q}(\mathbb{R}^{n})\rightarrow L^{2}(\mathbb{R}^{n})$ is bounded and
\m{
\int_{B^{*}}|T_\sigma^\varphi  a(x)|^{p}\dd x&\lesssim r^{1-\frac p2}\Vert a\Vert_{H^{q}(\R^n)}^{p}
\lesssim r^{1-\frac p2}|B|^{p\brkt{\frac 1q-\frac 1p}}\Vert b \Vert_{H^{q}(\R^n)}^{p}
\\ &\lesssim r^{1-\frac p2+n\brkt{\frac pq-1}} = 1.
}

Using the partition of unity that was introduced in \eqref{PL partition of unity}
we can write
\begin{equation}\label{defn of sss pieces of T}
 T_\sigma^\varphi  =\sum_{j=0}^{\infty}T_{j}=:\sum_{j}\sum_{\nu}T_{j}^{\nu}.
\end{equation}

Now to deal with the integral $\displaystyle \int_{B^{*c}}|T_\sigma^\varphi a(x)|^{p}\dd x,$ using the notation in \eqref{defn of sss pieces of T}, we observe that
\begin{equation}\label{Tphisigma2 over BC}
  \int_{B^{*c}}|T_\sigma^\varphi  a(x)|^p \, \dd x\leq \sum_{2^{j}<r^{-1}}\int_{B^{*c}}|T_{j}a(x)|^p \, \dd x +\sum_{2^{j}\geq r^{-1}}\int_{B^{*c}}|T_{j}a(x)|^p \,\dd x.
\end{equation}
For $2^{j}<r^{-1} $ we use \eqref{eq:operatorestimate1} to deduce
\nm{grunka 2}{
&\int_{B^{*c}}|T_{j}a(x)|^p \dd x \\ \lesssim & \sum_{\nu} {  \int_{\R^n}\frac{ 2^{j\brkt{m_c(p)+\frac{n+1}2}p}2^{j Mp}r^{ Mp}r^{np-n}   }{\brkt{1+2^{2j}\abs{\nabla_{\xi_1}\varphi(x,\xi_j^\nu)-\bar y_1}^2}^{Np}\brkt{1+2^{j}\abs{\nabla_{\xi'}\varphi(x,\xi_j^\nu)-\bar y'}^2}^{Np}}} \dd x \\ \lesssim & \sum_{\nu} {  \int_{\R^n}\frac{ 2^{j\brkt{m_c(p)+\frac{n+1}2}p}2^{j Mp}2^{-j\frac{n+1}2}r^{ Mp}r^{np-n}   }{\brkt{1+\abs{x_1}^2}^{Np}\brkt{1+\abs{x'}^2}^{Np}}} \dd x \\ \lesssim &\, 2^{j\frac{n-1}{2}} 2^{jp\brkt{m_c(p)+\frac{n+1}2}}2^{j Mp}2^{-j\frac{n+1}2}r^{ Mp}r^{np-n}
}
where we have used the non-degeneracy condition on $\varphi$ to make the change of variables and the fact the are $O\brkt{2^{j\frac{n-1}{2}}}$ terms in the sum over $\nu$. Summing over $2^{j}<r^{-1} $ yields 
\m{\sum_{2^{j}<r^{-1}}\int_{B^{*c}}|T_{j}a(x)|^p \dd x \lesssim 1}

if $M$ and $N$ are chosen appropriately. For the second term in \eqref{Tphisigma2 over BC} we use \eqref{eq:operatorestimate2} and $2^{j}\geq r^{-1}$ to deduce
\nm{grunka 1}{
&\int_{B^{*c}}|T_{j}a(x)|^p \dd x \\&\lesssim \sum_\nu\int_{\R^n}{  \frac{2^{jp\brkt{m_c(p)+\frac {n+1}2}}  2^{-jMp }r^{-Mp}r^{np-n} 
2^{4jNp}r^{4Np}
}{\brkt{1+2^{2j}\abs{\nabla_{\xi_1}\varphi(x,\xi_j^\nu)-\bar y_1}^2}^{Np}\brkt{1+2^{j}\abs{\nabla_{\xi'}\varphi(x,\xi_j^\nu)-\bar y'}^2}^{Np}}}  \dd x \\&\lesssim \sum_\nu\int_{\R^n}{  \frac{2^{jp\brkt{m_c(p)+\frac {n+1}2}}  2^{-jMp }2^{-\frac{n+1}{2}}r^{-Mp}r^{np-n} 
2^{4jNp}r^{4Np}
}{\brkt{1+\abs{x_1}^2}^{Np}\brkt{1+\abs{x'}^2}^{Np}}}  \dd x \\&\lesssim   2^{j\frac{n-1}{2}}2^{jp\brkt{m_c(p)+\frac {n+1}2}}  2^{-jMp }2^{-j\frac{n+1}{2}}r^{-Mp}r^{np-n} 
2^{4jNp}r^{4Np}
}
where we once again have used the non-degeneracy condition on $\varphi$ to make the change of variables and the fact the are $O\brkt{2^{j\frac{n-1}{2}}}$ terms in the sum over $\nu$. Summing over $2^{j}\geq r^{-1} $ yields 
\m{\sum_{2^{j}\geq r^{-1}}\int_{B^{*c}}|T_{j}a(x)|^p \dd x \lesssim 1}

for appropriate $M$ and $N$. This proves \eqref{eq: spliting the FIO on balls} for balls of radius less than or equal to one.\\

{\bf{Step 2 -- Estimates of $\mathbf{\norm{T_\sigma^\varphi a}_{L^p(\R^n)}}$ when $\mathbf{r> 1}$}}

This part of the estimate can be easily handled by the H\"older inequality. Indeed we have
\m{
\int_{\R^n} |T_\sigma^\varphi  a(x)|^p\, \dd x &\leq |\supp_x \sigma|^{1-\frac p2} \brkt{\int_{\R^n} |T_\sigma^\varphi  a(x)|^2\, \dd x } ^{\frac p2}\lesssim \Vert a\Vert^{p}_{L^2(\R^n)}\\& \lesssim r^{n\brkt{\frac p2-1}}\lesssim 1,
}
as desired.\\

{\bf{Step 3 -- Estimates of $\mathbf{\norm{(T_\sigma^\varphi)^* a}_{L^p(\R^n)}}$ when $\mathbf{r\leq 1}$}} \\
As in Step 1 we split 
\begin{equation}\label{eq: spliting the FIO on balls2}
  \int_{\mathbb{R}^{n}}|(T_\sigma^\varphi)^* a(x)|\dd x=\int_{B^{*}}|(T_\sigma^\varphi)^* a(x)|\dd x+\int_{B^{*c}}|(T_\sigma^\varphi)^*a(x)|\dd x
\end{equation}
and for the first term we proceed exactly as in Step 1 when $p=1$. For the second term we once again use the partition of unity that was introduced in Definition \ref{def:LP} and write 
\begin{equation}\label{defn of sss pieces of T2}
 (T_\sigma^\varphi)^* =\sum_{j=0}^{\infty}T_{j} =\sum_{2^j<r^{-1}}^{\infty}T_{j} +\sum_{2^j\geq r^{-1}}^{\infty}T_{j}.
\end{equation}
Starting with the case $2^j<r^{-1}$, Lemma \ref{SSS estimates for Kj2} (i), moment condition on $a$ and Minkowski's inequality yield that
\nm{eq:SSS adjoint small ball estimate}{
\norm{T_ja}_{L^1(B^{*c}) } &\lesssim \int_{B^{*c} } \int_B \abs{K_j(x,y)-K_j(x,\bar y)}\abs{a(y)} \dd y \dd x \\&\lesssim  \int_B \abs{a(y)}\brkt{\int_{B^{*c} }  \abs{K_j(x,y)-K_j(x,\bar y)}  \dd x} \dd y \\ & \lesssim 2^j\, r,
}
where $\overline{y}$ is as usual the centre of the support of the atom $a$. Summing in $j$ we obtain 
\m{
\sum_{2^j<r^{-1}}\norm{T_ja}_{L^1(B^{*c}) } \lesssim 1
}
Now we turn into the case when $2^j<r^{-1}$. Lemma \ref{SSS estimates for Kj2} (ii) and Minkowski's inequality yields 
\m{
\norm{T_ja}_{L^1(B^{*c}) } &\lesssim \int_{B^{*c} } \int_B \abs{K_j(x,y)}\abs{a(y)} \dd y \dd x \\&\lesssim  \int_B \abs{a(y)}\brkt{\int_{B^{*c} }  \abs{K_j(x,y)}  \dd x} \dd y \\ & \lesssim 2^{-j}\, r^{-1}
}
and summing in $j$ we get that 
\m{
\sum_{2^j\geq r^{-1}}\norm{T_ja}_{L^1(B^{*c}) } \lesssim 1
}
and hence the desired results of Step 3 has been proven.\\

{\bf{Step 4 -- Estimates of $\mathbf{\norm{(T_\sigma^\varphi)^* a}_{L^p(\R^n)}}$ when $\mathbf{r> 1}$}}\\
Now we go for atoms with its support in balls of radii $r\geq 1$. To prove the boundedness of the adjoint $\brkt{T_a^\varphi}^*$, we split the $L^1$-norm into two pieces, namely 
\begin{equation}\label{eq: splitting the FIO on balls3}
  \int_{\mathbb{R}^{n}}|(T_\sigma^\varphi)^* a(x)|\dd x=\int_{B'}|(T_\sigma^\varphi)^* a(x)|\dd x+\int_{B'^{c}}|(T_\sigma^\varphi)^*a(x)|\dd x
\end{equation}
where $B'$ is a ball centered at the origin and has a radius of $2K$, where $K:= \sup_{(y,\xi)\in (\supp \sigma\,\cap\, (\R^n \times \mathbb{S}^{n-1}))}|\nabla_\xi \varphi(y,\xi)|$.  We treat the first term of \eqref{eq: splitting the FIO on balls3} as in Step 2. For the second term we observe that the kernel of 
$(T_\sigma^\varphi)^*$ satisfies 
\begin{equation}\label{based on nonstationary}
\abs{\int_{\R^n} e^{ix\cdot\xi -i\varphi(y,\xi)} \sigma(y,\xi)\ddd \xi } \lesssim \frac 1{\abs x^{N}},
\end{equation}
for $|x|>2K$. This follows from the fact that the modulus of the gradient of the phase of the oscillatory integral in \eqref{based on nonstationary} satisfies $|x-\nabla_\xi\varphi(y,\xi)|\geq |x|-K \geq |x|/2$. Now a standard non-stationary phase argument yields \eqref{based on nonstationary}. Hence 
\m{
\int_{B'^{c}}|(T_\sigma^\varphi)^*a(x)|\dd x \lesssim \int_{B'^{c}}\frac 1{\abs x^{N}}\brkt{\int_B \abs{a(y)}\dd y} \dd x \lesssim 1.
}\hspace*{1cm}\\

{\bf{Step 5 --  Globalisation of the local result}} \\
We will proceed by globalising the previous result for both $T_\sigma^\varphi$ and $\brkt{T_\sigma^\varphi}^* $ at the same time. For the latter we will only consider the case when $p=1$. Whenever we write $T$ we refer to both $T_\sigma^\varphi$ and $\brkt{T_\sigma^\varphi}^* $.\\

To prove that $\displaystyle \int_{\mathbb{R}^{n}}|Ta(x)|^{p}\dd x\lesssim 1$ when there is no requirement on the support of the amplitude, we need to use a different strategy. First we observe that a global norm estimate for $Ta$ with $a$ supported in a ball with an arbitrary centre, would follow from a uniform in $s$ norm-estimate for $\tau_s^* T\tau_s a$, with an atom $a$ whose support is inside a ball centred at the origin. This is because by translation invariance of the $L^p$ norm one has that $\Vert Ta\Vert_{L^p(\R^n)}= \Vert \tau_s^* T\tau_s \tau_{-s} a\Vert_{L^p(\R^n)}.$ Note that here $\tau_s$ is the operator of translation by $s\in\R^n$. Thus our goal is to establish that $\Vert \tau_s^* T\tau_s a\Vert_{L^p(\R^n)}\lesssim 1$, where the estimate is uniform in $s$ and $a$ has the support in a ball centred at the origin.\\

At this point we once again use the conditions on the phase function and Theorem \ref{invariance thm} on the invariance of Triebel-Lizorkin spaces under diffeomorphisms as in the proof of Lemma \ref{prop:low2 BL},  to reduce our analysis to the case of operators with $\varphi$ of \smallskip the form $\theta (x,\xi)+x\cdot \xi$ or $-\theta (y,\xi)-y\cdot \xi$ with $\theta \in \Phi^1$. Now let $r\geq1$, $\displaystyle L>\frac{n}{p}$ and $s\in\R^n$.
Suppose $a$ is an $h^p$ atom supported in a ball $B$, centred at the origin, with radius $r$. Split the $L^p$ norm of $\tau_s^* T\tau_s a$ into following two pieces:
\[
\left\| \tau_s^* T\tau_s a \right\|_{L^p(\R^n)}\leq \left\|\tau_s^* T\tau_s a\right\|_{L^p\brkt{\widetilde{\Delta}_{2r}}}+\left\|\tau_s^* T^\phase_{\sigma}\tau_s a\right\|_{L^p\brkt{\R^n \setminus \widetilde{\Delta}_{2r}}}
\]
First let us show that
\[
\left\|\tau_s^* T\tau_s a\right\|_{L^p\brkt{\widetilde{\Delta}_{2r}}}
\leq  C(n,M_{L},N_{L+1}).
\]

For $x\in \widetilde{\Delta}_{2r}$ and $|y|\leq r$, we have
$\widetilde{H}(x)\leq2 H(x,y,x-y)$ and $(x,y,x-y)\in\Delta_r$
by Lemma \ref{Lem:outside}.
This fact and Lemma \ref{Lem:kernel RuzhSug} yield for any atom $a$ supported in $B(0,r)$
\begin{align}\label{Ruz-Sug ball estim}
| T a(x)|&\leq 2^{L}\widetilde{H}(x)^{-L}\int_{|y|\leq r}\abs{H(x,y, x-y)^{L}K(x,y,x-y)\, a(y)} \dd y
\\\nonumber &\leq 2^{L}\widetilde{H}(x)^{-L}
\norm{ H^{L} K}_{L^\infty( \Delta_r)}
\Vert a\Vert_{L^1(\R^n)}\\\nonumber  &\leq C(n,L,M_L ,N_{L+1})\,  \widetilde{H}(x)^{-L}\, r^{n\brkt{1-\frac 1p}},
\end{align}
since $\displaystyle \Vert a\Vert_{L^1(\R^n)} \leq |B|^{1-\frac 1p}.$ 
Therefore, if $r\geq 1$, choosing $\displaystyle L>\frac np$, Lemma \ref{Lem:kernel RuzhSug} and the monotonicity
of $\Delta_r$ yield
\begin{align}\label{grunka 3}
\left\|T a\right\|_{L^p\brkt{\widetilde{\Delta}_{2r}}}
&\lesssim
\left\|\widetilde{H}(x)^{-L}\right\|_{L^p\brkt{\widetilde{\Delta}_{2r}}}
\leq C(n, M_L, N_{L+1}).
\end{align}
Observe that the phase function and the amplitude of $\tau_s^* T\tau_s$ are of the form  {$ \theta (x+ \nolinebreak  s, \xi)+(x-y)\cdot\xi$ and  $\sigma (x+s, \xi)$} respectively when $T= T_\sigma^\varphi$ (a similar property is also true for $\brkt{T_\sigma^\varphi}^*$). Therefore the conjugation of $T$ by $\tau_s$ renders the constants $M_L$ and $N_{L+1}$ unchanged and therefore the estimate above also yields the very same one for $\tau_s^* T\tau_s$. This means that
$\left\|\tau_s^* T\tau_s a\right\|_{L^p\brkt{\widetilde{\Delta}_{2r}}}\lesssim 1$.\\

On the other hand for $\left\|\tau_s^* T\tau_s a\right\|_{L^p\brkt{\R^n \setminus \widetilde{\Delta}_{2r}}},$ Lemma \ref{Lem:outside}, H\"older's inequality and the properties of the atom $a$ yield that
\begin{align*}
\left\|\tau_s^* T\tau_s a\right\|_{L^p\brkt{\R^n \setminus \widetilde{\Delta}_{2r}}}
&\leq
\abs{\R^n \setminus \widetilde{\Delta}_{2r}}^{1-\frac p2}
\left\|\tau_s^* T\tau_s a \right\|^{p}_{L^2(\R^n)}
\\
&\lesssim
r^{n\brkt{1-\frac p2}}\|a\|^{p}_{L^2(\R^n)}
\lesssim r^{n\brkt{1-\frac p2}} r^{n\brkt{\frac p2 -1}} = 1.
\end{align*}
Now if the atom is supported in a ball of radius $r\leq 1$ then clearly $\supp a\subset B(0,1).$ Now write $\R^n = \widetilde{\Delta}_2 \cup (\R^n \setminus \widetilde{\Delta}_2)$ and observe that we can now use Lemma \ref{Lem:kernel RuzhSug} with $r=1$ to conclude that 
\begin{equation*}
|Ta(x)| \lesssim\widetilde{H}(x)^{-L},
\end{equation*}
which in turn yields that $\Vert \tau_s^* T\tau_s a\Vert_{L^p \brkt{\widetilde{\Delta}_2}}\lesssim 1.$
Using now the first part of Lemma \ref{Lem:outside} we see that $\R^n \setminus \widetilde{\Delta}_2 \subset B(0, 2+N_{K})$ which together with the local boundedness result that we established previously implies that
\begin{equation}
\Vert \tau_s^* T\tau_s a\Vert_{L^p \brkt{\R^n \setminus\widetilde{\Delta}_2}}\lesssim \Vert \tau_s^* T\tau_s a\Vert_{L^p (B(0, 2+N_{K}))}\lesssim \Vert a\Vert_{h^p (\R^n)}\lesssim 1.
\end{equation}\\

{\bf{Step 6 -- Lifting the result to $\mathbf{h^p\to h^p} $ boundedness }} \\
In order to boost up the $h^p\to L^p$ boundedness of $T^\varphi_a$ to the desired $h^p\to h^p$ boundedness, we follow the strategy in \cite{PelosoSecco}. As it was shown in that paper, in order to show that $f\in h^p(\R^n)$ it is enough to prove that $r_{\varepsilon}^{\alpha}(D) f \in L^p(\R^n)$ uniformly in $\varepsilon$, where $\varepsilon\in (0,1]$, $\alpha=(\alpha_1, \dots,\, \alpha_n)\in \mathbb{Z}_{+}^n$, $ \displaystyle r_{\varepsilon}^{\alpha} (\xi)= \widehat{\Psi} (\varepsilon \xi) \prod_{i=1}^n \brkt{\frac{\xi_i}{|\xi|}}^{\alpha_i} \brkt{1-\widehat{\Theta}(\xi)}^{\alpha_i}$, $\Psi\in \mathcal{C}^\infty_c (\R^n)$ with $\displaystyle \int_{\R^n} \Psi (x)  \dd x =1$, and $\widehat{\Theta}$ is the smooth cut-off function  which is identically one in a
neighborhood of the origin. Moreover
$$\norm{ \widehat{\Theta}(D) f}_{L^p(\R^n)}+\sum_{M\leq |\alpha|\leq M+1} \sup_{0<\varepsilon\leq 1} \norm{r^{\alpha}_{\varepsilon} (D) f}_{L^p(\R^n)} \sim \Vert f\Vert _{h^p(\R^n)},$$
where $\displaystyle M=\left [n\brkt{\frac 1p -1} \right ]+1. $
As a consequence, it will be enough to prove that
\begin{equation}\label{eq:endpoints converse 1}
	\Vert r_{\varepsilon}^{\alpha}(D) T_\sigma^\varphi f\Vert_{L^p(\R^n)} \lesssim \Vert f\Vert_{h^{p}(\R^n)},
\end{equation}
uniformly in $\varepsilon$, and
\begin{equation}\label{eq:endpoints converse 2}
	\norm{ \widehat{\Theta}(D) T_\sigma^\varphi f }_{L^p(\R^n)} \lesssim \Vert f\Vert_{h^{p}(\R^n)}.
\end{equation}

To lift the boundedness for $T_\sigma^\varphi $ we proceed as follows:  If the phase function $\varphi\in \Phi^2$ and is SND, then using our phase reduction mentioned previously, it is not hard to show that for a reduced phase $\varphi$, one has $|\nabla_x \varphi(x,\xi)|\sim  |\xi|,$ and for $|\alpha|$, $|\beta|\geq 1$ we have $|\partial_{x}^{\alpha} \varphi(x,\xi)|\lesssim \langle \xi\rangle$, $|\partial_{\xi}^{\alpha}\partial_{x}^{\beta} \varphi(x,\xi)|\lesssim |\xi|^{1-|\alpha|}$ (observe also that $|\xi|$ is large). Moreover  $r_{\varepsilon}^{\alpha}(D)$ and $ \widehat{\Theta}(D)$ are pseudodifferential operators with symbols respectively in $S^0(\R^n)$ (uniformly in $\varepsilon$), and in $S^{-\infty}(\R^n)$. Therefore, using Theorem \ref{prop:monsteriosity} with $t=1$, we can see that the compositions $ r_{\varepsilon}^{\alpha}(D) T_\sigma^\varphi $ and $\widehat{\Theta}(D) T_\sigma^\varphi $ are FIOs with amplitudes in $S^{m_c(p)}(\R^n)$ and $S^{-\infty}(\R^n)$ and phase functions $\varphi$, and therefore \eqref{eq:endpoints converse 1} and \eqref{eq:endpoints converse 2} are both valid.\\

To lift the boundedness for $\brkt{T_\sigma^\varphi} ^\ast$ we proceed as follows: Observe that for any real-valued Fourier multiplier $p(\xi)\in S^s(\R^n)$ one has that $P(D)\brkt{T_\sigma^\varphi }^\ast=\brkt{T_\sigma^\varphi P(D)}^*$. Now $T_\sigma^\varphi P(D)$ is an FIO with the phase $\varphi$ and an amplitude in $S^{m_c(1)+s}(\R^n)$. In particular, if $ P(D)$ is either of $r_\varepsilon^\alpha(D)$ or $\widehat{\Theta}(D),$ then $T_\sigma^\varphi P(D)$ is an FIO with phase $\varphi$ and an amplitude in the class $S^{m_c(1)}(\R^n)$. However, in Step 3 and 4 of this proof, we have shown that the adjoints of such operators are bounded from $h^1(\R^n)$ to $ L^1(\R^n)$ and therefore $P(D)\brkt{T_\sigma^\varphi}^\ast$ is also bounded from $h^1(\R^n)$ to $  L^1(\R^n)$ and  once again \eqref{eq:endpoints converse 1} and \eqref{eq:endpoints converse 2} are valid uniformly in $\varepsilon\in (0,1]$. Therefore we have that $\brkt{T_\sigma^\varphi}^\ast$ is bounded on $h^1(\R^n)$.\\ 

{\bf{Step 7 -- Lifting to $\mathbf{F^s_{p,2}} $ }}\\
So far we have shown that $T_\sigma^\varphi$ is bounded from $F^0_{p,2}(\R^n)$ to itself, for $0<p<1$ and that $\brkt{T_\sigma^\varphi}^*$ is bounded on $F^0_{1,2}(\R^n)$. Hence using complex interpolation and thereafter duality, the operator $T_\sigma^\varphi$ is bounded from $F^0_{p,2}(\R^n)$ to itself for $0<p\leq \infty$. One uses a similar reasoning as in Theorem \ref{prop:monsteriosity} (or the global calculus of FIOs) to see that the operator $(1-\Delta)^{\frac s2} T^{\varphi}_a(1-\Delta)^{-\frac s2}$ is a similar operator associated to an amplitude in $S^{m_c(p)}(\R^n)$ and phase $\varphi$, and hence bounded from $F^0_{p,2}(\R^n)$ to itself. Therefore using the fact that the operator $(1-\Delta)^{\frac{s}{2}}$ is an isomorphism from $F^s_{p,2}(\R^n)$ to $F^0_{p,2}(\R^n)$ for $0<p\leq \infty$, we obtain the desired result of Proposition \ref{Thm:high TL}. 
\end{proof}

\subsection{Triebel-Lizorkin boundedness of the low frequency portion of FIOs}\label{section:the local section Triebel}
\noindent In this section we prove the boundedness of FIOs, where the amplitudes are frequency-supported in a neighbourhood of the origin. This is quite similar to the case of Besov-Lipschitz spaces and we shall use the estimates that were developed in that context. As before, $T_{a_0}^\varphi$ will denote the FIO with amplitude $a_0(x,\xi)=a(x,\xi)\psi_0(\xi).$

where $\psi_0$ is as in Definition \ref{def:LP}. We start with the local result:
\begin{prop}[Local boundedness]\label{prop:low1 TL}
Let $a (x,\xi)\in S^{m}(\R^n)$, with compact support in $x$ and $\varphi (x,\xi)\in \mathcal{C}^{\infty}(\R^n \times \R^n\setminus\{0\})$ be positively homogeneous of degree one in $\xi$ and non-degenerate on the support of $a(x,\xi)$. Then $T_{a_0}^\varphi  :F_{p,q_1}^{s_1}(\R^n)\to F_{p,q_2}^{s_2}(\R^n)$, for $0<p,q_1,q_2 \leq\infty$, $-\infty <s_1,s_2<\infty$.
\end{prop}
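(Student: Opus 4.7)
The plan is to follow closely the strategy of Proposition~\ref{prop:low1 BL}, modifying only the final step to accommodate the $\ell^{q_2}$-summation required by the Triebel-Lizorkin norm. First I would apply Lemma~\ref{phase reduction lem} together with the invariance of Triebel-Lizorkin spaces under suitable diffeomorphisms (Theorem~\ref{invariance thm}, valid for $0<p<\infty$, $0<q\leq\infty$) to reduce to phase functions of the form $x\cdot\xi+\theta(x,\xi)$ with $\theta\in\Phi^1$. As in the Besov-Lipschitz proof, one may then assume $f=\chi(D)f$ for a smooth cut-off $\chi$ equal to $1$ on $\supp\psi_0$.

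Next, repeating verbatim the integration by parts from \eqref{eq:lowfreq3} with the operators $L_\xi=1-\Delta_\xi$ and $L_y=1-\Delta_y$ and large integers $N_1,N_2$, then using Lemma~\ref{lem:David-Wulf} to bound the kernel of the associated FIO $T^\varphi_{b_0}$ by $\langle x-y\rangle^{-n-\varepsilon}$, and finally invoking Lemma~\ref{grafakos lemma 1} together with the compact spatial support of $b_{0}$, one arrives at the same pointwise estimate as in \eqref{jaevla namn},
\[
|\psi_j(D)T^\varphi_{a_0}f(x)|\lesssim 2^{j(n-2N_1)}\langle x\rangle^{-2N_2}\int_{\mathcal{K}}\bigl(M(|f|^r)(y)\bigr)^{1/r}\,dy,
\]
for any $r>n/(n+1)$. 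The decisive feature of this bound is that the $j$-dependence on the right-hand side is a pure multiplicative prefactor $2^{j(n-2N_1)}$ which is independent of both $x$ and $f$.

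Consequently, the $\ell^{q_2}(j)$-summation defining the Triebel-Lizorkin norm factors trivially out of the $L^p_x$ norm, giving
\[
\|T^\varphi_{a_0}f\|_{F^{s_2}_{p,q_2}(\R^n)}\lesssim\Bigl(\sum_{j=0}^\infty 2^{jq_2(s_2+n-2N_1)}\Bigr)^{1/q_2}\|\langle\cdot\rangle^{-2N_2}\|_{L^p(\R^n)}\,\|f\|_{L^\infty(\R^n)},
\]
where the integral $\int_{\mathcal{K}}(M(|f|^r))^{1/r}dy$ has been absorbed into $\|f\|_{L^\infty}$ via the compact domain of integration and the $L^\infty$-boundedness of $M$. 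Choosing $N_1$ so that $s_2+n-2N_1<0$ renders the geometric series convergent (with the usual $\ell^\infty$ interpretation if $q_2=\infty$) and $N_2$ so that $2N_2p>n$ places $\langle\cdot\rangle^{-2N_2}$ in $L^p(\R^n)$. Since $f=\chi(D)f$ is frequency-localised in a fixed compact set, Lemma~\ref{lem:bernstein} yields $\|f\|_{L^\infty}\lesssim\|f\|_{L^p}\lesssim\|f\|_{F^{s_1}_{p,q_1}(\R^n)}$, closing the estimate. The only new ingredient beyond the Besov-Lipschitz proof is the interchange of the $\ell^{q_2}(j)$ and $L^p_x$ norms, which here is free of charge because the $j$-dependence is purely a constant prefactor; in particular no vector-valued maximal inequality is required, which is what makes the low-frequency portion on Triebel-Lizorkin scales as painless as its Besov-Lipschitz counterpart.
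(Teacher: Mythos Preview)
Your proposal is correct and follows essentially the same route as the paper's own proof: both rely on the pointwise estimate \eqref{jaevla namn} established in the proof of Proposition~\ref{prop:low1 BL}, observe that the $j$-dependence is a pure scalar prefactor $2^{j(n-2N_1)}$, and then sum the geometric series inside the Triebel--Lizorkin norm. If anything, your write-up is slightly more careful than the paper's: you explicitly retain the spatial decay factor $\langle x\rangle^{-2N_2}$ before taking the $L^p$-norm, which is what actually makes $\|\cdot\|_{L^p}$ finite for $p<\infty$, whereas the paper's displayed chain suppresses this factor and cites only the pointwise bound \eqref{jaevla namn 2} by $\|f\|_{L^p}$.
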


\begin{proof}
Using \eqref{jaevla namn 2}, for $0<p\leq \infty$  we have the pointwise estimate
\m{
\left |\psi_j(D)T_{a_0}^\varphi  f(x) \right |
\lesssim 2^{j(n-2N_1)} \norm{f}_{L^p(\R^n)},
}

from which it follows that
\begin{equation*}
\begin{split}
\norm{T_{a_0}^\varphi  f}_{F_{p,q_2}^{s_2}(\R^n)} &= \norm{  \brkt{\sum_{j=0}^\infty 2^{js_2q_2}|\psi_j(D)T_{a_0}^\varphi f|^{q_2}} ^{\frac 1{q_2}}  }_{L^p(\R^n)}\\ &\lesssim \left (\sum_{j=0}^\infty 2^{jq_2(s_2+n-2N_1)} \norm{f}_{L^{p}(\R^n)}^{q_2} \right )^{\frac 1{q_2}} \\ & =  \norm{f}_{L^{p}(\R^n)}\left (\sum_{j=0}^\infty 2^{jq_2(s_2+n-2N_1)} \right )^{\frac 1{q_2}}\\& \lesssim \norm{ f}_{L^{p}(\R^n)}\lesssim \norm{ f}_{F_{p,q_1}^{s_1}(\R^n)}.
\end{split}
\end{equation*}
\end{proof}
Now we state and prove the global boundedness of FIOs with frequency localised amplitudes on Triebel-Lizorkin spaces. The proof of this is similar to that of Propositions \ref{prop:low2 BL}, and \ref{prop:low1 TL} and hence is omitted.
\begin{prop}[Global boundedness]\label{prop:low2 TL}
Let $a(x,\xi)\in S^m(\R^n)$ and $\varphi (x,\xi)\in \Phi^2$ verifies the $\mathrm{SND}$ condition. Then  $T_{a_0}^\varphi :F_{p,q_1}^{s_1}(\R^n)\to F_{p,q_2}^{s_2}(\R^n)$, for $\displaystyle \frac{n}{n+1}<p\leq\infty$, $0<q_1,q_2 \leq\infty$, $-\infty <s_1,s_2<\infty$.
\end{prop}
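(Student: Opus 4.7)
The plan is to combine the pointwise estimate strategy developed in the proof of Proposition \ref{prop:low2 BL} with the way the Triebel–Lizorkin norm was handled in Proposition \ref{prop:low1 TL}. First, I would invoke the phase reduction Lemma \ref{phase reduction lem} together with the invariance of Triebel–Lizorkin spaces under suitable global diffeomorphisms (Theorem \ref{invariance thm}), which, exactly as in the Besov case, reduces the analysis to operators whose phase has the form $x\cdot\xi+\theta(x,\xi)$ with $\theta\in\Phi^{1}$. Without loss of generality I may assume $f=\chi(D)f$ for a fixed smooth cut-off $\chi$ equal to $1$ on $\supp\psi_{0}$.

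Next, I would write $\psi_{j}(D)T_{a_{0}}^{\varphi}f$ as an oscillatory integral and integrate by parts using $L_{\xi}=1-\Delta_{\xi}$ and $L_{y}=1-\Delta_{y}$, noting that the integration by parts formula \eqref{eq:lowfreq3}, together with the commutator identity \eqref{eq:lowfreq5}, is insensitive to whether the amplitude is compactly supported in $x$; this is exactly where Proposition \ref{prop:low2 BL} diverges from the local argument of Proposition \ref{prop:low1 BL}. Invoking Lemma \ref{lem:David-Wulf} applied to $b(x,\xi)=b_{0}(x,\xi)e^{i\theta(x,\xi)}$, the kernel $K$ of $T_{b_{0}}^{\varphi}$ satisfies $|K(x,y)|\lesssim\langle x-y\rangle^{-n-\varepsilon}$ globally, and then Lemma \ref{grafakos lemma 1} (applicable because $f$ has Fourier support in the unit ball) yields the pointwise bound
\begin{equation*}
\bigl|\psi_{j}(D)T_{a_{0}}^{\varphi}f(x)\bigr|\lesssim 2^{j(n-2N_{1})}\bigl(M(|f|^{r})(x)\bigr)^{1/r},
\end{equation*}
valid for any $r>\frac{n}{n+1}$, uniformly in $j$.

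With this pointwise estimate in hand, the Triebel–Lizorkin norm is easy to handle. Fix any $r$ with $\frac{n}{n+1}<r<p$, which is possible precisely because $p>\frac{n}{n+1}$; this is where the restriction on $p$ enters. Choose $N_{1}$ so large that $s_{2}+n-2N_{1}<0$, so that $\bigl(\sum_{j\geq 0}2^{jq_{2}(s_{2}+n-2N_{1})}\bigr)^{1/q_{2}}<\infty$ (with the obvious interpretation when $q_{2}=\infty$). Since the $j$-dependence of the right-hand side is just the scalar factor $2^{j(n-2N_{1})}$, computing the $\ell^{q_{2}}(j)$-norm inside the $L^{p}$-norm gives
\begin{equation*}
\|T_{a_{0}}^{\varphi}f\|_{F^{s_{2}}_{p,q_{2}}(\R^{n})}\lesssim \bigl\|\bigl(M(|f|^{r})\bigr)^{1/r}\bigr\|_{L^{p}(\R^{n})}=\bigl\|M(|f|^{r})\bigr\|_{L^{p/r}(\R^{n})}^{1/r}\lesssim \|f\|_{L^{p}(\R^{n})},
\end{equation*}
where the last step uses the boundedness of the Hardy–Littlewood maximal operator on $L^{p/r}(\R^{n})$, which holds since $p/r>1$. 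Finally, the frequency support of $f$ and the embedding of Triebel–Lizorkin spaces with compactly Fourier-supported elements into $L^{p}$ give $\|f\|_{L^{p}(\R^{n})}\lesssim\|f\|_{F^{s_{1}}_{p,q_{1}}(\R^{n})}$, concluding the proof.

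The only genuine obstacle is the coupling between the two restrictions on $r$: one needs $r$ strictly between $\frac{n}{n+1}$ (from Lemma \ref{grafakos lemma 1}) and $p$ (from the maximal inequality), so the range $p>\frac{n}{n+1}$ is sharp for this argument and no further technicality arises.
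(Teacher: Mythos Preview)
Your proposal is correct and follows precisely the route the paper intends: it combines the global pointwise bound $|\psi_{j}(D)T_{a_{0}}^{\varphi}f(x)|\lesssim 2^{j(n-2N_{1})}(M(|f|^{r})(x))^{1/r}$ from Proposition \ref{prop:low2 BL} with the Triebel--Lizorkin norm computation of Proposition \ref{prop:low1 TL}, which is exactly why the paper omits the proof and refers to those two results. The only detail worth flagging is that Theorem \ref{invariance thm} gives diffeomorphism invariance of $F^{s}_{p,q}$ only for $p<\infty$, so the endpoint $p=\infty$ formally requires a separate justification; this caveat applies equally to the paper's omitted argument.
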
\hspace*{1cm}
\subsection{Local and Global boundedness of FIOs on Triebel-Lizorkin spaces}\label{section.local and global triebel for fios}
\noindent In this section we state and prove the local and global boundedness of Fourier integral operators on Triebel-Lizorkin spaces. In view of the results of the previous sections, what remains to do is to put the high and low frequency results for various cases together.\\

\begin{thm}\label{thm:local and global TL}
Let $p\in (0,\infty]$, $a (x,\xi)\in S^{m_c (p)}(\R^n)$ and $\varphi(x,\xi) \in\mathcal{C}^{\infty}(\R^n \times\R^n \setminus\{0\}),$ be positively homogeneous of degree one in $\xi$. Then under these assumptions, the following results hold true$:$

\begin{enumerate}
\item[\emph{(i)}] If $a(x,\xi)$ has compact support in $x$ and $\varphi$ is non-degenerate on the support of $a(x,\xi),$
then for any $s\in \R$ and $0<p\leq \infty$ the operator $T_a^\varphi $
is bounded from $F_{p,2}^{s}(\R^n) $ to $ F_{p,2}^s(\R^n)$.

\item[\emph{(ii)}] If $\varphi (x,\xi)\in \Phi^2$ is $\mathrm{SND}$, then for any $s\in \R$ and $\displaystyle \frac{n}{n+1}<p\leq \infty$, the operator $T_a^\varphi$ is bounded from $F_{p,2}^{s}(\R^n) $ to $ F_{p,2}^s(\R^n)$.
\end{enumerate}

\end{thm}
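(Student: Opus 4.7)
My plan is to mimic the assembly already carried out for Theorem \ref{thm:local and global BL} in the Besov-Lipschitz setting: decompose $T_a^\varphi$ via the Littlewood-Paley cutoff $\psi_0$ from Definition \ref{def:LP}, writing
\begin{equation*}
T_a^\varphi f(x) = \int_{\R^n} \psi_0(\xi)\, a(x,\xi)\, e^{i\varphi(x,\xi)} \widehat{f}(\xi)\ddd \xi + \int_{\R^n} (1-\psi_0(\xi))\, a(x,\xi)\, e^{i\varphi(x,\xi)} \widehat{f}(\xi)\ddd \xi =: T_1 f(x) + T_2 f(x),
\end{equation*}
and then estimating the two pieces separately in $F^{s}_{p,2}(\R^n)$. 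The final bound follows by the quasi-triangle inequality \eqref{quasibanch triangel}, which is valid in $F^{s}_{p,2}(\R^n)$ for every $0 < p \leq \infty$.

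For the low-frequency piece $T_1$, the amplitude is compactly supported in $\xi$, so the critical order $m_c(p)$ plays no role. Under the hypotheses of (i), I would invoke Proposition \ref{prop:low1 TL} with $s_1 = s_2 = s$ and $q_1 = q_2 = 2$, obtaining $T_1 \colon F^{s}_{p,2}(\R^n)\to F^{s}_{p,2}(\R^n)$ for all $p \in (0,\infty]$. Under (ii) I would instead apply Proposition \ref{prop:low2 TL} with the same choice of parameters; this yields the same bound but only for $p \in (n/(n+1),\infty]$, and is precisely what forces the lower bound on $p$ in the global statement.

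For the high-frequency piece $T_2$, the amplitude $(1-\psi_0(\xi))\, a(x,\xi)$ lies in $S^{m_c(p)}(\R^n)$ and vanishes near $\xi = 0$, so Proposition \ref{Thm:high TL} applies directly in case (ii) and delivers $T_2 \colon F^{s}_{p,2}(\R^n) \to F^{s}_{p,2}(\R^n)$ for all $s \in \R$ and all $p \in (n/(n+1),\infty]$, provided $n \geq 2$. For case (i), where only non-degeneracy of $\varphi$ on $\supp a$ is available, I would first extend $\varphi$ to a globally $\mathrm{SND}$ phase by a standard cutoff-and-extension modification outside the $x$-support of $a$ (which leaves the operator $T_2$ unchanged since $a$ is compactly supported in $x$), and then appeal to Proposition \ref{Thm:high TL} as before. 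Combining the high- and low-frequency estimates completes the proof. The main substantive content resides entirely in Proposition \ref{Thm:high TL} (the Seeger-Sogge-Stein second dyadic decomposition coupled with the atomic decomposition of $h^p$ and the Ruzhansky-Sugimoto globalisation), so there is no serious additional obstacle here — the theorem is essentially a bookkeeping assembly of already-proved components.
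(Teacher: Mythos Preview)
Your proposal is correct and follows essentially the same assembly as the paper: split $T_a^\varphi=T_1+T_2$ via $\psi_0$, handle the low-frequency piece with Proposition~\ref{prop:low1 TL} (case~(i)) or Proposition~\ref{prop:low2 TL} (case~(ii)), and the high-frequency piece with Proposition~\ref{Thm:high TL}.

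The only point of divergence is your treatment of the high-frequency part in case~(i). You propose to extend $\varphi$ off $\supp_x a$ to a globally $\mathrm{SND}$ phase in $\Phi^2$ and then invoke Proposition~\ref{Thm:high TL} as a black box. The paper instead observes directly that compactness of $\supp_x a$, homogeneity, and non-degeneracy force
\[
\bigl|\det\bigl(\partial^2_{x_j\xi_k}\varphi(x,\xi)\bigr)\bigr|\geq \min_{(x,\xi)\in\supp a\times\mathbb{S}^{n-1}}\bigl|\det\bigl(\partial^2_{x_j\xi_k}\varphi(x,\xi)\bigr)\bigr|>0,
\]
and that $\varphi\in\Phi^2$ on the relevant set, so the \emph{arguments} of Proposition~\ref{Thm:high TL} go through verbatim without any extension. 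This is slightly cleaner: your extension step, while doable, is not entirely trivial to carry out rigorously (one must keep both the $\Phi^2$ estimates and the $\mathrm{SND}$ lower bound through the transition region), whereas the paper's observation sidesteps the issue entirely since every estimate in the proof of Proposition~\ref{Thm:high TL} is local in $x$ on $\supp_x\sigma$.
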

\begin{proof}
For the proof of (i), one observes that the compact support, the homogeneity and the non-degeneracy of the phase function yield that $$\abs{\det\brkt{\partial^2_{x_j\xi_k}\varphi (x,\xi)}}\geq \min_{(x,\xi)\, \in\, \supp\, a \times \mathbb{S}^{n-1}}\abs{\det\brkt{\partial^2_{x_j\xi_k}\varphi (x,\xi)}}>0.$$ Moreover, the same conditions on the phase also yield that $\varphi\in \Phi^2$. Thus for the high frequency portion of the FIO, the desired boundedness follows from the same arguments as in the proof of Proposition \ref{Thm:high TL}. Now adding the low frequency result of Proposition \ref{prop:low1 TL}, we can conclude the proof of (i).\\

To prove (ii) one just combines the results of Proposition \ref{Thm:high TL} and Proposition \ref{prop:low2 TL}.\\

For the case of $n=1$, we can split $T_a^\varphi$ into two pseudodifferential operators and a smoothing operator. For the details see the proof of Theorem \ref{dimension_one}.
\end{proof}

\section{Results obtained by interpolation}
As was mentioned before, using our results concerning Besov-Lipschitz and Triebel-Lizorkin boundedness of FIOs, we can also extend the ranges of Triebel-Lizorkin boundedness a bit further. This is done by complex interpolation (see e.g. \cite{Kalton}) in the vertical direction between $F^s_{p,p}(\R^n)=B^s_{p,p}(\R^n)$ and $F^s_{p,2}(\R^n)$ (as in Figure 1).

\vspace*{.5cm}
\begin{figure}[ht!]
\centering\includegraphics[scale=.75]{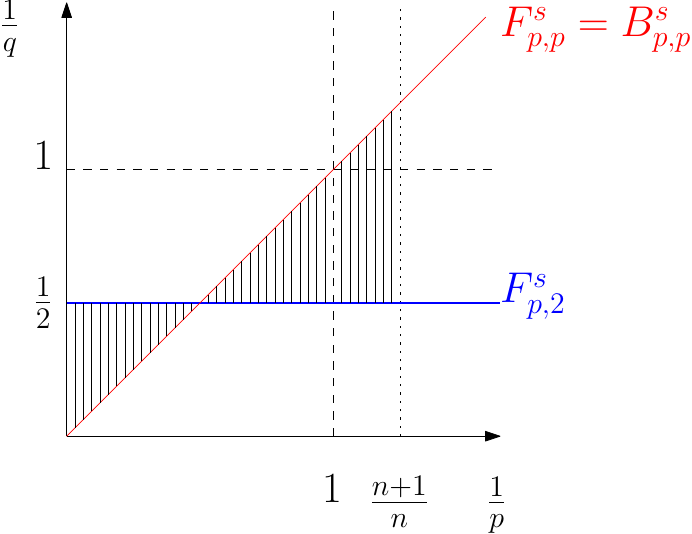}
\caption{Global boundedness in Triebel-Lizorkin scale.}
\end{figure}
\vspace*{.5cm}

This yields our main local and global boundedness results:

\begin{thm}\label{thm:local and global TL complement}
Let $0<p\leq \infty$, $\displaystyle m_c(p) := -(n-\nolinebreak 1)\abs {\frac 1p -\frac 12},$ $a (x,\xi)\in S^{m_c(p)}(\R^n)$, and $\varphi(x,\xi) \in\mathcal{C}^{\infty}(\R^n \times\R^n \setminus\{0\}),$ be positively homogeneous of degree one in $\xi$.  Then under these assumptions, the following results hold true$:$\\

\begin{enumerate}
\item[\emph{(i)}] If $a(x,\xi)$ has compact support in $x$ and $\varphi(x,\xi)$ is non-degenerate on the support of $a(x,\xi),$
then for any $s\in \R$,  $0<p< \infty$, $\min\, (2,p)\leq q\leq \max\,( 2,p)$, the operator $T_a^\varphi $
is bounded from $F_{p,q}^{s}(\R^n) $ to $ F_{p,q}^s(\R^n)$.\\

\item[\emph{(ii)}] If $\varphi (x,\xi)\in \Phi^2$ is $\mathrm{SND}$, then for any $s\in \R$,  $\displaystyle \frac{n}{n+1}<p< \infty$, \linebreak$\min\, (2,p)\leq q\leq \max\,( 2,p)$, the operator $T_a^\varphi$ is bounded from $F_{p,q}^{s}(\R^n) $ to $ F_{p,q}^s(\R^n)$.\\

\item[\emph{(iii)}] In both cases $\mathrm{(i)}$ and $\mathrm{(ii)}$ the corresponding operator is bounded from $F_{\infty,2}^{s}(\R^n) $ to $ F_{\infty,2}^{s}(\R^n),$ for $s\in \R$.\\
\item[\emph{(iv)}] If $\varphi(x,\xi)=|\xi|+x\cdot\xi$ for $s\in \mathbb{R}$ and $1\leq q\leq \infty$ one has that
for $a\in S^m(\R^n)$
\m{
\norm{ T_a^{|\cdot|}f}_{F^{s-m-\frac {n-1}2}_{1,q}(\R^n)}\lesssim \norm{ f}_{F^s_{1, q}(\R^n)}.
}
\end{enumerate}
\end{thm}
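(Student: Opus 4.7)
The special structure of the phase $\varphi(x,\xi)=|\xi|+x\cdot\xi$ permits the factorisation
\[
T_a^{|\cdot|} f(x) = \int_{\R^n} a(x,\xi)\, e^{ix\cdot\xi}\, e^{i|\xi|}\,\widehat{f}(\xi)\,\ddd\xi = a(x,D)\big(e^{i|D|} f\big)(x),
\]
which reduces the claim to two independent statements. First, since $a\in S^m(\R^n)$, the classical boundedness of pseudodifferential operators on Triebel--Lizorkin spaces (see \cite{Trie83}) gives $a(x,D):F^{\sigma}_{1,q}(\R^n)\to F^{\sigma-m}_{1,q}(\R^n)$ for every $\sigma\in\R$ and every $q\in[1,\infty]$. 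It therefore suffices to prove the multiplier estimate
\[
\norm{e^{i|D|} f}_{F^{s-(n-1)/2}_{1,q}(\R^n)} \lesssim \norm{f}_{F^s_{1,q}(\R^n)} \qquad \text{for } 1\le q\le \infty,\ s\in\R.
\]

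My plan for this estimate is to show that the Fourier multiplier $m(\xi):=e^{i|\xi|}\jap{\xi}^{-(n-1)/2}$ is the Fourier transform of a finite Borel measure $\mu$ on $\R^n$. This is the endpoint of the classical Peral--Miyachi estimates for the half-wave propagator at time $1$: the operator $e^{i|D|}\jap{D}^{-(n-1)/2}$ is bounded from $L^1(\R^n)$ to $L^1(\R^n)$, and a translation invariant bounded operator on $L^1$ is necessarily convolution with a finite Borel measure (the classical characterisation of the convolution algebra $M(\R^n)$).

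Once $e^{i|D|}\jap{D}^{-(n-1)/2}=\mu\ast$ is known, the pointwise inequality $\abs{\psi_j(D)(\mu\ast f)}=\abs{\mu\ast\psi_j(D)f}\leq|\mu|\ast\abs{\psi_j(D)f}$ combined with Minkowski's inequality for the $\ell^q$ norm against the positive measure $|\mu|$ yields
\[
\brkt{\sum_{j\ge 0} 2^{jq\sigma}\,\abs{\psi_j(D)(\mu\ast f)}^q}^{1/q}(x) \leq |\mu|\ast\brkt{\sum_{j\ge 0} 2^{jq\sigma}\,\abs{\psi_j(D)f}^q}^{1/q}(x),
\]
and integrating in $x$, together with $|\mu|(\R^n)<\infty$, produces the boundedness of convolution with $\mu$ on $F^\sigma_{1,q}(\R^n)$ for every $\sigma\in \R$ and every $q\in[1,\infty]$. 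Applying the lifting isomorphism $\jap{D}^{(n-1)/2}:F^{s}_{1,q}(\R^n)\to F^{s-(n-1)/2}_{1,q}(\R^n)$ to the decomposition $e^{i|D|}=\jap{D}^{(n-1)/2}\circ m(D)$ then yields the multiplier estimate, and composing with the pseudodifferential bound for $a(x,D)$ completes the proof.

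The principal obstacle is reaching the full range $q\in[1,\infty]$: the vertical interpolation and duality arguments underlying parts (i)--(iii) only supply $q\in[1,2]$ when $p=1$. The translation invariance of $e^{i|D|}$ is what makes the missing range accessible, since it lets us replace a genuinely vector-valued Triebel--Lizorkin estimate by a scalar bounded-measure convolution, which is insensitive to the value of $q$.
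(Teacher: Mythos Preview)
Your reduction $T_a^{|\cdot|} = a(x,D)\circ e^{i|D|}$ and the subsequent Minkowski argument for convolution with a finite measure are both correct, but the central claim --- that $e^{i|D|}\langle D\rangle^{-(n-1)/2}$ is bounded on $L^1(\R^n)$, hence given by convolution with a finite Borel measure --- is false, and this is precisely the step meant to deliver the range $q>2$. A stationary phase computation shows that the convolution kernel of $e^{i|D|}\langle D\rangle^{-(n-1)/2}$ has a singularity of order $\bigl||x|-1\bigr|^{-1}$ along the unit sphere, which is not locally integrable; equivalently, the Peral--Miyachi endpoint at $p=1$ gives $H^1\to L^1$ (and weak-type $(1,1)$), not strong $L^1\to L^1$. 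The underlying reason is that isolating the single exponential $e^{i|\xi|}$ from the real quantity $\widehat{d\sigma}(\xi)\sim c\,|\xi|^{-(n-1)/2}\cos\bigl(|\xi|-\tfrac{(n-1)\pi}{4}\bigr)$ requires a Hilbert-transform-type operation in the radial variable, and such an operation is never convolution with a finite measure.

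The paper circumvents this by invoking Peral's structural factorisation rather than an $L^1$ bound: the operator is written as $b(D)(f\ast d\sigma)$ with $b\in S^0(\R^n)$ and $d\sigma$ the surface measure on $S^{n-1}$. Here $d\sigma$ genuinely \emph{is} a finite measure, so your own Minkowski argument shows that $f\mapsto f\ast d\sigma$ is bounded on $F^s_{1,q}$ for every $q\ge 1$; the remaining zero-order multiplier $b(D)$, while unbounded on $L^1$, is bounded on every $F^s_{1,q}$ by the standard pseudodifferential calculus \cite{Triebelpseudo}. The paper records the endpoint $q=\infty$ obtained this way and then interpolates with the $F^s_{1,2}$ bound from part~(ii); the range $1\le q\le 2$ already comes from~(ii). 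The repair for your argument is therefore to separate the finite-measure part from the order-zero multiplier part \emph{before} invoking the convolution estimate --- exactly the content of Peral's factorisation.
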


Statement $\mathrm{(iii)}$ is the consequence of the fact that for the aforementioned phases, the adjoint of the operator is bounded from $F_{1,2}^{-s}(\R^n) $ to $F_{1,2}^{-s}(\R^n).$\\

The last claim follows from the work of J. Peral \cite{Peral}, which implies that for \linebreak $\displaystyle a\in S^{-\frac{n-1}2}(\R^n)$ the operator $\displaystyle T_a^{|\cdot|}f$ has a factorisation $b(D)(f\ast d\sigma)$ where $d\sigma$ is the surface measure of the unit sphere and $b(\xi) \in S^0(\R^n)$. This and the Minkowski inequality in turn yield that

\m{
\norm{ T_a^{|\cdot|}f}_{F^{s-m-\frac {n-1}2}_{1,\infty}(\R^n)}\lesssim \Vert f\Vert_{F^s_{1, \infty}(\R^n)}.
}
and interpolation of this with $F^s_{1,2}(\R^n)$ yields the desired result.
\begin{rem}
The $F^s_{1, \infty}(\R^n)$ result above concerning the phase functions of the form  $x\cdot\xi+|\xi|$ could presumably be extended to a global result for phase functions of the form $x\cdot\xi+\phi(\xi)$ \emph{(}$\phi$ positively homogeneous of degree $1$\emph{)} or a local regularity for operators with phases of the form $\phi(x,\xi)$ \emph{(}positively homogeneous of degree $1$ in $\xi$ and non-degenerate\emph{)}. This is done by using a result of $\mathrm{T.\, Tao}$ \cite{tao} to decompose the corresponding \emph{FIOs} into a composition of a pseudodifferential operator and a generalised averaging operator $($which is bounded on $L^1(\R^n)$$)$. The details for this will appear elsewhere.
\end{rem}\hspace*{1cm}

\section{Boundedness of FIOs on Triebel-Lizorkin spaces in dimension one}\label{dimension_one}
In this section we separate the results in dimension one that were missing in the previous section for Triebel-Lizorkin spaces. We will also see that one has much more flexibility in dimension one in proving the optimal results for all scales of the Triebel-Lizorkin spaces. To this end we have
\begin{thm} Let $p\in (0,\infty]$, $a (x,\xi)\in S^{0}(\R)$ and $\varphi(x,\xi) \in\mathcal{C}^{\infty}(\R \times\R \setminus\{0\}),$ be positively homogeneous of degree one in $\xi$. 

If $\varphi\in \Phi^2$ and is $\mathrm{SND},$ then $T^{\varphi}_a$ is bounded from $F_{p,q}^s(\R)$ to itself, for $\displaystyle \frac{1}{2}<p\leq \infty$ and $0<q\leq \infty.$ Once again, the assumption of the compact support of the amplitude in $x$ and the non-degeneracy of the phase yields the result for the improved range $p\in (0,\infty]$.
\end{thm}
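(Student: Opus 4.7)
The plan is to follow the by-now familiar split $T_a^\varphi = T_1 + T_2$, where $T_1$ corresponds to the amplitude $a_0(x,\xi) = \psi_0(\xi)\,a(x,\xi)$ and $T_2$ to the high-frequency complement $(1-\psi_0(\xi))\,a(x,\xi)$. The low-frequency operator $T_1$ is handled directly by Proposition \ref{prop:low2 TL} in the SND case (noting that $\tfrac{n}{n+1} = \tfrac{1}{2}$ in dimension one) and by Proposition \ref{prop:low1 TL} in the compactly supported case, and maps into any target Triebel-Lizorkin space. All the new work concerns $T_2$.

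The key simplification in dimension one is that the critical order $m_c(p) = -(n-1)|1/p - 1/2|$ is identically zero, so $a \in S^0(\R)$ sits exactly at the critical threshold regardless of $p$; moreover the Seeger-Sogge-Stein second dyadic decomposition of Definition \ref{def:LP2} collapses completely, since $\mathbb{S}^0 = \{\pm 1\}$ contains just two directions. Writing $T_{2,j}^\varphi$ for the Littlewood-Paley piece of $T_2$ with amplitude $(1-\psi_0(\xi))\,a(x,\xi)\,\psi_j(\xi)$, the kernel estimate of Lemma \ref{lem:kernelestimate} specialises to
\begin{equation*}
|K_j(x,y)| \lesssim \frac{2^j}{\bigl(1 + 2^j|\partial_\xi \varphi(x,\xi_j) - y|\bigr)^{N}}, \qquad \xi_j \in \{\pm 1\},
\end{equation*}
by a single integration by parts against the phase $\varphi(x,\xi) - y\cdot\xi$, without any angular sum to absorb.

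Next I would reduce to $s = 0$ exactly as at the end of the proof of Proposition \ref{Thm:high TL}: conjugation by $(1-\Delta)^{s/2}$, combined with Theorem \ref{prop:monsteriosity} applied with $t=1$, produces a new FIO of the same type but with amplitude still in $S^0(\R)$. Then, applying Theorem \ref{prop:monsteriosity} with $t = 2^{-j}$ to the composition $\psi_j(D)\,T_a^\varphi$, one obtains (up to a remainder of arbitrarily negative order, treated as in Step 2 of the proof of Proposition \ref{prop:high}) an FIO with phase $\varphi$ and amplitude in $S^0(\R)$ frequency-supported in $|\xi|\sim 2^j$. Using the kernel bound above together with the one-dimensional Peetre majorisation (Lemma \ref{lem:Triebel-Schmeisser} with $n'=0$ and $c_{j,k}=2^j$), I would obtain the pointwise estimate
\begin{equation*}
|\psi_j(D)\, T_a^\varphi f(x)| \lesssim \bigl(M|\Psi_j(D)f|^{r}\bigr)^{1/r}\!\bigl(\partial_\xi \varphi(x,\xi_j)\bigr)
\end{equation*}
valid for any $r > 0$, where $M$ is the Hardy-Littlewood maximal operator. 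The SND and $\Phi^2$ hypotheses ensure that $t_{\pm}(x) := \partial_\xi \varphi(x,\pm 1)$ is a global bi-Lipschitz diffeomorphism of $\R$ with $|t_\pm'(x)| \gtrsim 1$ and $|t_\pm'(x)| \lesssim 1$, so by Theorem \ref{invariance thm} the composition with $t_\pm$ can be absorbed in the $F^0_{p,q}(\R)$ norm.

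Choosing $r < \min(p,q)$ and invoking the Fefferman-Stein vector-valued maximal inequality then yields
\begin{equation*}
\norm{T_a^\varphi f}_{F^0_{p,q}(\R)} \lesssim \norm{\Bigl(\sum_{j} \bigl(M|\Psi_j(D)f|^{r}\bigr)^{q/r}\Bigr)^{1/q}}_{L^{p}(\R)} \lesssim \norm{f}_{F^0_{p,q}(\R)}
\end{equation*}
for all $\tfrac{1}{2} < p < \infty$ and $0 < q \leq \infty$, and the endpoint $p = \infty$ follows by duality and interpolation as in part $\mathrm{(iii)}$ of Theorem \ref{thm:local and global TL complement}. In the compactly supported case the diffeomorphism $t_\pm$ only needs to act locally on $\supp_x a$, so the argument extends to the full range $p \in (0,\infty]$. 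The main obstacle will be keeping the change of variables by $t_\pm$ compatible with the Peetre majorisation in the quasi-Banach range where $\min(p,q) < 1$; this is precisely what the Triebel-Schmeisser-type Lemma \ref{lem:Triebel-Schmeisser} was designed to accommodate, and its one-dimensional version suffices here.
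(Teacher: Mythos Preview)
Your approach is correct in spirit but genuinely different from the paper's. You adapt the higher-dimensional kernel/maximal machinery, observing that the angular decomposition collapses to the two half-lines. The paper instead exploits one-dimensional homogeneity more directly: since $\varphi(x,\xi) = |\xi|\,\varphi(x,\sgn\xi)$, the phase is \emph{linear} in $\xi$ on each of $\{\xi>0\}$ and $\{\xi<0\}$. Writing $1 = a_+(\xi) + a_-(\xi) + r(\xi)$ with $a_\pm$ smooth, supported in $\pm\xi > 0$, and $r$ compactly supported, one obtains
\[
T_a^\varphi f(x) = \int a_+(x,\xi)\, e^{i\varphi(x,1)\xi}\,\widehat f(\xi)\,\ddd\xi
+ \int a_-(x,\xi)\, e^{-i\varphi(x,-1)\xi}\,\widehat f(\xi)\,\ddd\xi
+ \text{(smoothing)}.
\]
Each main term is a zero-order pseudodifferential operator evaluated at the point $t_\pm(x) = \pm\varphi(x,\pm 1)$, so the result follows immediately from the known $F^s_{p,q}$-boundedness of $\Psi$DOs together with diffeomorphism invariance (and duality for $p=\infty$, applied to an adjoint of the same form). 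This bypasses the composition formula, the Peetre majorisation, and the Fefferman--Stein inequality entirely; your route reaches the same destination through substantially heavier machinery.

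There is one genuine gap in your outline. You defer the remainder $2^{-jM\varepsilon} T^\varphi_{r_j}$ from Theorem \ref{prop:monsteriosity} to ``Step 2 of the proof of Proposition \ref{prop:high}'', but that step only produces a uniform $L^p$ bound $\|T^\varphi_{r_j} f\|_{L^p} \lesssim \|f\|_{B^{s}_{p,q}}$. For the Triebel--Lizorkin norm you must control $\bigl\|\bigl(\sum_j 2^{-jM\varepsilon q} |T^\varphi_{r_j} f|^q\bigr)^{1/q}\bigr\|_{L^p}$, and since the $\ell^q$ sum sits \emph{inside} the $L^p$ norm this requires a pointwise bound on $T^\varphi_{r_j} f(x)$ uniform in $j$, not merely an $L^p$ one. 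This is repairable: decompose $r_j = \sum_k r_j\psi_k$, apply your one-dimensional kernel estimate to each piece to obtain $|T^\varphi_{r_{j,k}} f(x)| \lesssim 2^{-kM(1/2-\varepsilon)}\bigl(M|\Psi_k(D)f|^r\bigr)^{1/r}(t_\pm(x))$, sum geometrically in $k$ to get a $j$-independent bound, and then the sum in $j$ is trivial. But this is a different argument from the Besov remainder step and needs to be written out. A minor point: after the Peetre step you only need the $L^p$ change of variables $\|G\circ t_\pm\|_{L^p}\lesssim\|G\|_{L^p}$ (from SND), not the full Theorem \ref{invariance thm}.
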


\begin{proof}
Let $a_+(\xi)\in C^{\infty}(\R)$ such that $a_+(\xi)=0$ when $\xi\leq 0$ and
 $a_+(\xi)=1$ when $\xi \geq 1$ and let $a_-(\xi):=a_+ (-\xi).$ Now write $1$ as $a_+(\xi)+a_-(\xi)+ r(\xi)$, where \linebreak$a_{\pm}\in S_{1,0}^{0}(\R)$
and $r(\xi)=1- a_+(\xi)-a_-(\xi) \in C^{\infty}_{c}(\R).$ Moreover using the (degree one) positive homogeneity of the phase function and the fact that we are in dimension one, we also have that $\varphi(x,\xi)=|\xi|\varphi(x,\sgn(\xi)).$ This yields that
\begin{equation*}
\begin{split}
T^{\phase}_{a}f(x)&= \int_{\R} a_{+}(x,\xi)\, e^{i\varphi(x,1)\xi}\, \widehat{f}(\xi) \ddd \xi + \int_{\R} a_{-}(x,\xi)\, e^{-i\varphi(x,-1)\xi} \,\widehat{f}(\xi) \ddd \xi \\ &+\int_{\R} r_a (x,\xi)\, e^{i\varphi(x,\xi)} \,\widehat{f}(\xi) \ddd \xi,
\end{split}
\end{equation*}
where $a_{\pm}(x,\xi)= a_{\pm}(\xi) \,a(x,\xi)$ and $r_a (x,\xi) = r(\xi) \,a(x,\xi).$
Therefore, using the invariance of $F^s_{p,q}(\R)$ (with $0<p<\infty$) under change of variables (observe that $\abs{\varphi'(x,1)}\lesssim 1$ by the $\Phi^2$ condition) and the boundedness of pseudodifferential operators on $F^s_{p,q}(\R)$ together with Proposition \ref{prop:low2 TL} above, we obtain the $F^s_{p,q}(\R)$ boundedness of the first two terms above. The boundedness of the third term is trivial as the amplitude of that operator belongs to $S^{-\infty}(\R)$.\\

For $F^s_{\infty,q}(\R)$, we use once again duality, which amounts to show that the adjoint operator
\m{
T^{\phase*}_{a}f(x) &= \iint_{\R\times\R} \overline{a}_{+}(y,\xi)\, e^{i(x-\varphi(y,1))\xi} f(y) \ddd\xi \dd y\\ &+ \iint_{\R\times\R} \overline{a}_{-}(y,\xi)\, e^{i(x+\varphi(y,-1))\xi} f(y) \ddd\xi \dd y\\ &+\iint_{\R\times\R} \overline{r}_a (y,\xi)\, e^{i(x\xi-\varphi(y,\xi))}  f(y) \ddd\xi \dd y.
}

is bounded from $F^{-s}_{1,q'}(\R)$ to itself where $\displaystyle \frac 1{q'} +\frac 1q =1$. Therefore, once again the invariance of $h^1(\R)$ under global diffeomorphisms with bounded Jacobians reduces the problem to show that a pseudodifferential operator of order zero the form
$$\displaystyle \iint_{\R\times\R} \overline{b}(y,\xi) \,e^{i(x-y)\xi} \, f(y) \ddd\xi \dd y $$
is bounded on $F^{-s}_{1,q'}(\R)$ which is well-known by e.g. \cite{Triebelpseudo}. The boundedness of the third term is trivial, once again due to the rapid decay of its amplitude. This concludes the proof of the theorem in the case of $0<p\leq \infty$ in dimension one.\\
\end{proof}
The following corollary yields the invariance of the Triebel-Lizorkin spaces $F^{s}_{\infty,q}(\R)$ under change of variables, which is missing in the literature, see e.g. Theorem \ref{invariance thm}.

\begin{cor}
If $\eta$ is a diffeomorphism from $\R$ to $\R$ such that $|\eta'(x)|\sim 1$ for all $x\in \mathbb{R}$ then for $0<q\leq \infty$ one has that
$$\Vert f\circ \eta\Vert_ {F^{s}_{\infty,q}(\R)}\lesssim \Vert f\Vert_{F^{s}_{\infty,q}(\R)}.$$
\end{cor}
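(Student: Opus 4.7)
The plan is to recast the composition operator $f\mapsto f\circ\eta$ as a Fourier integral operator and then appeal to the boundedness theorem just proved in Section \ref{dimension_one}. By Fourier inversion one has the representation
\[
(f\circ\eta)(x)=\int_{\R} e^{i\eta(x)\xi}\,\widehat f(\xi)\,\ddd\xi,
\]
which is an FIO $T^{\varphi}_{a}$ with amplitude $a(x,\xi)\equiv 1\in S^{0}(\R)$ and phase $\varphi(x,\xi):=\eta(x)\xi$. This phase is positively homogeneous of degree one in $\xi$, so the only structural things left to verify are membership in the class $\Phi^{2}$ and the SND condition.

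First I would check $\varphi\in\Phi^{2}$. For multi-indices $\alpha,\beta$ with $|\alpha|+|\beta|\geq 2$ the derivative $\partial_\xi^\alpha\partial_x^\beta\varphi$ vanishes whenever $|\alpha|\geq 2$, equals $\eta^{(\beta)}(x)$ when $|\alpha|=1$, and equals $\eta^{(\beta)}(x)\,\xi$ when $|\alpha|=0$. In every case the weight $|\xi|^{-1+|\alpha|}$ exactly cancels the $\xi$-dependence, so the bound \eqref{C_alpha} reduces to the boundedness of the higher derivatives of $\eta$, which is the assumption implicit in the statement (compare with Theorem \ref{invariance thm}). The SND condition \eqref{SND} in dimension one becomes $|\partial^{2}_{x\xi}\varphi(x,\xi)|=|\eta'(x)|\geq\delta>0$, which is precisely the hypothesis $|\eta'(x)|\sim 1$.

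With these two checks in place, one can apply the theorem in Section \ref{dimension_one} with $p=\infty$ and general $0<q\leq\infty$, which yields
\[
\Vert f\circ\eta\Vert_{F^{s}_{\infty,q}(\R)}=\Vert T^{\varphi}_{a}f\Vert_{F^{s}_{\infty,q}(\R)}\lesssim\Vert f\Vert_{F^{s}_{\infty,q}(\R)},
\]
which is the desired estimate. I do not anticipate any substantial technical obstacle here: the content of the corollary is essentially that composition with $\eta$ is the simplest possible FIO, falling within the scope of the one-dimensional $F^{s}_{\infty,q}$ boundedness just established. The only reason this result is not already covered by Theorem \ref{invariance thm} is that the standard argument in \cite{JohnsenMunchSickel, Triebel 2} degenerates at $p=\infty$, and the FIO viewpoint provides exactly the missing ingredient via the duality/adjoint argument that underlies the $F^{s}_{\infty,q}$ case in the preceding theorem.
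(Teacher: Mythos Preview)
Your proposal is correct and follows exactly the same approach as the paper: express $f\circ\eta$ as the FIO $T^{\varphi}_{1}$ with phase $\varphi(x,\xi)=\eta(x)\xi$, verify that this phase is SND and lies in $\Phi^{2}$, and then invoke the one-dimensional $F^{s}_{\infty,q}$ boundedness theorem just established. You have in fact been more explicit than the paper about the $\Phi^{2}$ verification and the implicit need for boundedness of the higher derivatives of $\eta$, which the paper glosses over.
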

\begin{proof}
The result follows by observing that $f\circ \eta(x)$ can be expressed as an FIO with amplitude  $1$ and the phase function $\eta(x)\xi$, which verifies both the SND and the $\Phi^2$ conditions and is therefore bounded on $F^{s}_{\infty,q}(\R)$.
\end{proof}\hspace*{1cm}
\section{Sharpness of the results}\label{section:sharpness}

\noindent In this section we explain why the restriction imposed on $p$ in Theorem \ref{thm:local and global BL} is necessary. To see this, if we let $\sigma\in S^{m_c(p)}(\R^n)$ be supported in a neighbourhood of the origin and take a function $f\in\S(\R^n)$ such that $\widehat{f}$ is equal to one on the support of $\sigma(\xi)$, and take $\psi_0\in C^{\infty}_{c}(\R^n)$ such that it is equal to one on the support of $\widehat{f}$.  Then we can take annuli-supported $\psi_j$'s such that
$\psi_j(D) T^{\phi}_{\sigma} f(x)=0$ for $j\geq 1$ and
$$\displaystyle \psi_0(D) T^{\phi}_{\sigma} f(x)= \int_{\R^n} \sigma (\xi)\, e^{ix\cdot\xi +i\phi(\xi)} \ddd \xi.$$
Now assume that $T^{\phi}_{\sigma}$ is bounded on $B^{s}_{p,q}(\R^n)$ for all $p\in (0,\infty]$ then
$$\norm{ T^{\phi}_{\sigma} f}_{B^{s}_{p,q}(\R^n)} \lesssim \Vert f\Vert_{B^{s}_{p,q}(\R^n)}.$$
Moreover using the boundedness assumption above, Definition \ref{def:Besov}, the fact that $\psi_j(D) T^{\phi}_{\sigma} f(x)=0$ for $j\geq 1$, and finally the frequency localisation of $f$ yield that for all $s, q$ and $p$ one has
\begin{equation*}
\begin{split}
\norm{\int_{\R^n} \sigma (\xi)\, e^{ix\cdot\xi +i\phi(\xi)} \ddd \xi}_{L^p(\R^n)}&=\Vert\psi_{0}(D)T^{\phi}_{\sigma} f\Vert_{L^p(\R^n)}=\Vert T^{\phi}_{\sigma} f\Vert_{B^{s}_{p,q}(\R^n)}\\ &\lesssim \Vert f \Vert_{B^{s}_{p,q}(\R^n)}= \Vert \psi_0 (D) f \Vert_{L^p(\R^n)}\\ & =\Vert f \Vert_{L^p(\R^n)}<\infty.
\end{split}
\end{equation*}
But since
$$\int_{\R^n} \sigma (\xi)\, e^{ix\cdot\xi +i\phi(\xi)} \ddd \xi$$
is equal to the convolution kernel $K(x)$ of the FIO $T^{\phi}_{\sigma}$, then the decay provided by Lemma \ref{lem:David-Wulf} which is actually sharp, won't yield $\displaystyle \Vert K \Vert_{L^p(\R^n)}\leq\Vert\langle\cdot\rangle^{-n-\varepsilon}\Vert_{L^p(\R^n)} <\infty,$ for $\displaystyle p\in \left (0, \frac{n}{n+1} \right ].$\\

\noindent In dimension $n=1$ we can explicitly see this by considering the FIO with amplitude identically equal to $1\in S^{0}(\R)$
\[
	Tf(x):=\int_{\R} \widehat{f}(\xi) e^{i \abs{\xi}+i x\xi}\ddd \xi=\frac{f(x+1)+f(x-1)}{2}+i\frac{Hf(x+1)-Hf(x-1)}{2},
\]
where the operator $H$ is the Hilbert transform. If we take $f$ to be the characteristic function of the interval $[-1,1]$, one can calculate that
\[
Hf(x) = \frac{1}{\pi}\log\left|\frac{x+1}{x-1}\right|.
\]
This implies that the imaginary part of $Tf$ is
\begin{align*}
\frac{Hf(x+1)-Hf(x-1)}{2} &= \frac{1}{2\pi}\left(\log\left|\frac{x+2}{x}\right| - \log\left|\frac{x}{x-2}\right|\right) = \frac{1}{2\pi}\log\left|1-\frac{4}{x^2}\right| \\
&= -\frac{2}{\pi x^2} + O\left (x^{-4} \right )
\end{align*}
as $|x|\to \infty$. Note that $\displaystyle \log\left|1-\frac{4}{x^2}\right|\in L^{p}_{\mathrm{loc}}(\R)$ for $0<p<\infty$, but since the real part of $Tf$ is compactly supported, the asymptotic expansion above yields that
\[
Tf(x) = \left(\frac{\widehat{f}(0)}{\pi i}\right) \frac{1}{x^2} + O\left (x^{-4} \right )
\]
as $|x|\to \infty$. From this, it follows that $Tf$ can not be in $B^{s}_{p,q}(\R)$ unless $\displaystyle p>\frac 12$.\\

\noindent The local result in Theorem \ref{thm:local and global BL} is sharp by the virtue of the sharpness of the classical Seeger-Sogge-Stein theorem \cite{SSS}.

\section{Applications to Hyperbolic PDEs}\label{section:applications in PDE}
\noindent In this section we outline some of the applications of the main results of this paper. This concerns local and global Besov-Lipschitz estimates for solutions to the Cauchy problems for strictly hyperbolic partial
differential equations.
\noindent First let us consider the basic example of the wave equation in $\R^{n+1}$
\begin{equation*}
     \left\{ \begin{array}{lll}
         \partial^2_t u (t,x)-\Delta_x u(t,x)=0, & t\not=0, \, x\in\R^n, \\
         u(0,x)=f_0 (x),\\
         \partial_t u(0,x)= f_1(x).
      \end{array} \right.
  \end{equation*}
It is well-known that the solution to this Cauchy problem is given by
$$u(t,x)= \int_{\R^n} e^{i(x\cdot\xi+ t|\xi|)}\left (\frac{\widehat{f}_{0}(\xi)}{2}+ \frac{\widehat{f}_{1}(\xi)}{2i|\xi|}\right ) \ddd \xi+ \int_{\R^n} e^{i(x\cdot\xi- t|\xi|)}\left(\frac{\widehat{f}_{0}(\xi)}{2}- \frac{\widehat{f}_{1}(\xi)}{2i|\xi|}\right ) \ddd \xi.$$
Now, using Theorem \ref{thm:local and global BL} it is not hard to verify that for some $\tau>0$ and each \linebreak$t\in[-\tau,\tau]$ and all $\displaystyle p\in \left (\frac{n}{n+1},\infty \right ]$, $0<q\leq \infty$, $m\in \R$, $s\in \R$ and \linebreak$\displaystyle m_c (p)$ as in \eqref{eq:criticaldecay} then
\m{
&\sup_{t\in [-\tau,\tau]}\left \Vert (1-\Delta)^{\frac m2} u \right \Vert_{B^{s}_{p,q}(\R^n)} \leq  C_\tau \left (  \Vert f_0\Vert_{B^{s+m-m_c (p)}_{p,q}(\R^n)}+ \Vert f_1\Vert_{B^{s+m-1-m_c (p)}_{p,q}(\R^n)}\right ),
}
from which it follows that the solution of the wave equation verifies the following global (spatial) Besov space estimate
\begin{equation}\label{main global besov estimate for the wave equation}
\sup_{t\in [-\tau,\tau]}\Vert u\Vert_{B^{s}_{p,q}(\R^n)}\leq C_\tau \left ( \Vert f_0\Vert_{B^{s+(n-1)\left |\frac{1}{p}-\frac{1}{2}\right |}_{p,q}(\R^n)}+ \Vert f_1\Vert_{B^{s+(n-1)\left |\frac{1}{p}-\frac{1}{2} \right |-1}_{p,q}(\R^n)}\right ).
\end{equation}
In particular, for $p=q$ and $s\in \R\setminus\mathbb{Z}$ (i.e. non-integer), \eqref{main global besov estimate for the wave equation} is the global extension of the Sobolev and Lipschitz space estimates in Theorem 4.1 of \cite{SSS}, for the case of wave equation. Moreover \eqref{main global besov estimate for the wave equation} goes beyond that result since it also provides estimates for the solution in quasi-Banach spaces.\\

Similarly, using Theorem \ref{thm:local and global TL complement}
 we have for any $s\in \R$,  $\displaystyle \frac{n}{n+1}<p<\infty$, \linebreak $\min\, (2,p)\leq q\leq \max\,( 2,p)$ that
 \begin{equation}\label{main global Triebel estimate for the wave equation}
\sup_{t\in [-\tau,\tau]}\Vert u\Vert_{F^{s}_{p,q}(\R^n)}\leq C_\tau \left ( \Vert f_0\Vert_{F^{s+(n-1)\left |\frac{1}{p}-\frac{1}{2}\right |}_{p,q}(\R^n)}+ \Vert f_1\Vert_{F^{s+(n-1)\left |\frac{1}{p}-\frac{1}{2} \right |-1}_{p,q}(\R^n)}\right ).
\end{equation}
Moreover if $p=1$ then the estimate above can actually be extended to the whole range $1\leq q\leq \infty$, and if $p=\infty$ and $q=2$ then the estimate still holds true, in particular one has
\begin{equation*}
\sup_{t\in [-\tau,\tau]}\Vert u\Vert_{\bmo(\R^n)}\leq C_\tau \left ( \Vert f_0\Vert_{F^{\frac{n-1}{2}}_{\infty,2}(\R^n)}+ \Vert f_1\Vert_{F^{\frac{n-3}{2}}_{\infty,2}(\R^n)}\right ),
\end{equation*}
 which yields that in 3 spatial dimensions,
 \begin{equation*}
\sup_{t\in [-\tau,\tau]}\Vert u\Vert_{\bmo(\R^3)}\leq C_\tau \left ( \Vert f_0\Vert_{F^{1}_{\infty,2}(\R^3)}+ \Vert f_1\Vert_{\bmo(\R^3)}\right ).
\end{equation*}\\

\noindent Concerning the local Besov space estimates, one can improve on the range of the estimates in $p$. In this connection let us consider the Cauchy problem for a strictly hyperbolic partial differential equation
\begin{equation}\label{hyp Cauchy prob}
  \begin{cases}
    \displaystyle Lu =0, \quad t\neq 0\\
    \left. \partial_{t}^{j} u \right|_{t=0}=f_ j (x),\quad 0\leq j\leq N-1,
  \end{cases}
\end{equation}
where $L:=\partial^{N}_{t} + \sum_{j=1}^{N} P_{j}(x,t,\nabla_x ) \partial_{t}^{N-j}$, $N\in \mathbb N$ and $P_j$ are variable-coefficient differential operators in such a way that $L$ becomes a strictly hyperbolic operator. This means that the principal symbol of $L$, denoted by $p(x, t, \xi, \tau)$ can be factored as 
\begin{equation}
p(x, t, \xi, \tau)=\prod_{j=1}^{m}\left(\tau-\lambda_{j}(x, t, \xi)\right),
\end{equation}
where all the $\lambda_{j}$'s are distinct, and are real {homogeneous symbols of degree one in $\xi$} that smoothly depend on $(x,t)$. \\

It is well-known (see e.g. \cite{Stein}) that this problem can be solved locally in time and modulo smoothing operators by
\begin{equation}\label{FIO representation of the solution}
  u(x,t)= \sum_{j=0}^{N-1}\sum_{k=1}^{N} \int_{\R^n} e^{i\varphi_{k}(x,\xi,t)} a_{jk}(x,\xi,t) \, \widehat{f_{j}}(\xi) \ddd \xi,
\end{equation}
where $a_{jk}(x,\xi,t)$ are suitably chosen amplitudes depending smoothly on $t$ and belonging to $S^{-j}_{1, 0}(\R^n)$, and the phases $\varphi_{k}(x,\xi,t)$ also depend smoothly on $t,$ are strongly non-degenerate and belong to the class $\Phi^2.$ This yields the following:
\begin{thm}\label{local besov estimate for hyperbolic pde}
\noindent Let $u(x,t)$ be the solution of the hyperbolic Cauchy problem \eqref{hyp Cauchy prob} with initial data $f_j$. Then for all $p, q\in (0, \infty]$ and $s\in \R$ and any $\chi\in \mathcal{C}^{\infty}_c (\R^n)$, the solution $u(\cdot, t)$ satisfies the local Besov-Lipschitz space estimate
  \begin{equation}\label{IRS local estimate}
    \sup_{t\in [-\tau,\tau]}\Vert \chi\, u\Vert_{B^{s}_{p,q}(\R^n)}\leq C_{\tau} \sum_{j=0}^{N-1}\Vert f_{j}\Vert_{B_{p,q}^{s+(n-1)\abs{\frac{1}{p}-\frac{1}{2}}-j}(\R^n)}.
  \end{equation}

  Similarly for any $s\in \R$,  $0<p< \infty$, $\min\, (2,p)\leq q\leq \max\,( 2,p)$, one has the local Triebel-Lizorkin estimate
  \begin{equation}
    \sup_{t\in [-\tau,\tau]}\Vert \chi\, u\Vert_{F^{s}_{p,q}(\R^n)}\leq C_{\tau} \sum_{j=0}^{N-1}\Vert f_{j}\Vert_{F_{p,q}^{s+(n-1)\abs{\frac{1}{p}-\frac{1}{2}}-j}(\R^n)},
  \end{equation}
  Which also holds when $p=\infty$ and $q=2$.
  Moreover if $\displaystyle m<-(n-1)\abs{\frac{1}{p}-\frac{1}{2}}$ then for all $s\in\mathbb{R}$ and $p, q\in (0, \infty]$ one has
  \begin{equation*}
    \sup_{t\in [-\tau,\tau]}\Vert \chi\, u\Vert_{F^{s}_{p,q}(\R^n)}\leq C_{\tau} \sum_{j=0}^{N-1}\Vert f_{j}\Vert_{F_{p,q}^{s-m-j}(\R^n)}.
  \end{equation*}
Furthermore, all the estimates above can be globalised  $($i.e. we can remove the cut-off function $\chi$ in all of them$)$ for $\displaystyle p\in \left ( \frac n{n+1} , \infty\right ]$, $q\in (0, \infty]$ and $s\in \R.$
\end{thm}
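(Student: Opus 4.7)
The plan is to apply the FIO boundedness theorems established in the previous sections directly to the representation \eqref{FIO representation of the solution} of the solution. Each summand is a Fourier integral operator $T^{\varphi_k}_{a_{jk}}$ with amplitude $a_{jk}(\cdot,\cdot,t)\in S^{-j}(\R^n)$ and phase $\varphi_k(\cdot,\cdot,t)\in\Phi^2$ satisfying the SND condition, with all relevant seminorms and the SND constant depending continuously on $t\in[-\tau,\tau]$.

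For the local estimates, first I would multiply by $\chi$ and absorb the cut-off into the amplitude, writing
\[
\chi(x)\,T^{\varphi_k(\cdot,\cdot,t)}_{a_{jk}(\cdot,\cdot,t)}f_j(x)=T^{\varphi_k(\cdot,\cdot,t)}_{\chi\,a_{jk}(\cdot,\cdot,t)}f_j(x),
\]
so that the new amplitude $(x,\xi)\mapsto \chi(x)\,a_{jk}(x,\xi,t)$ has compact $x$-support on which $\varphi_k$ is non-degenerate. Since $\chi\,a_{jk}\in S^{-j}(\R^n)$ uniformly in $t$, Theorem \ref{thm:local and global BL}(i) applied with $m=-j$ and $m_c(p)=-(n-1)|1/p-1/2|$ gives
\[
\Vert \chi\,T^{\varphi_k}_{a_{jk}}f_j(\cdot,t)\Vert_{B^s_{p,q}(\R^n)}\lesssim \Vert f_j\Vert_{B^{s+(n-1)|1/p-1/2|-j}_{p,q}(\R^n)},
\]
uniformly in $t\in[-\tau,\tau]$. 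Summing over $j=0,\dots,N-1$ and $k=1,\dots,N$ and absorbing the smoothing remainder from the FIO representation (which maps continuously between any pair of Besov-Lipschitz or Triebel-Lizorkin spaces) yields \eqref{IRS local estimate}. The two Triebel-Lizorkin statements are obtained identically by invoking Theorem \ref{thm:local and global TL complement}(i),(iii) and Theorem \ref{thm:local and global nonendpoint TL}(i) respectively in place of Theorem \ref{thm:local and global BL}(i).

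To globalise, that is, to remove the cut-off $\chi$ in the range $p\in(n/(n+1),\infty]$, I would simply replace each appeal to the local part of the relevant FIO theorem by its global counterpart, namely part (ii) of Theorem \ref{thm:local and global BL}, Theorem \ref{thm:local and global TL complement}, and Theorem \ref{thm:local and global nonendpoint TL}; the fact that $\varphi_k\in\Phi^2$ is globally SND (uniformly in $t$) is exactly the hypothesis required by those results. The only real point to verify is the uniformity of the constants $C_\tau$ in $t\in[-\tau,\tau]$, and this is the mildest obstacle one needs to address: it follows from the smooth dependence of $a_{jk}$ and $\varphi_k$ on $t$ together with the compactness of the time interval, since the boundedness constants in each of the applied theorems depend on finitely many seminorms of the amplitude and the phase, all of which are continuous functions of $t$.
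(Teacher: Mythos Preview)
Your proposal is correct and follows the same approach as the paper, which simply states that the result follows from the FIO representation \eqref{FIO representation of the solution} together with Theorems \ref{thm:local and global BL}, \ref{thm:local and global TL complement}, and \ref{thm:local and global nonendpoint TL}. You have merely fleshed out the details the paper leaves implicit: absorbing $\chi$ into the amplitude, handling the smoothing remainder, and tracking the uniformity of the constants in $t$.
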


\begin{proof}
This follows at once from the Fourier integral operator representation \eqref{FIO representation of the solution} and theorems \ref{thm:local and global BL}, \ref{thm:local and global TL complement} and \ref{thm:local and global nonendpoint TL}.
\end{proof}
\noindent Estimate \eqref{IRS local estimate} is an extension of \eqref{SSS local sobolev estimate for the wave equation} which was proven in \cite{SSS}, to the case of $s\in\R$, $p\neq q$ and also the quasi-Banach setting.\\

\parindent 0pt

\bibsection
\begin{biblist}[\small]
\bib{Brenner}{article}{
   author={Brenner, Philip},
   title={On $L_{p}-L_{p^{\prime} }$ estimates for the wave-equation},
   journal={Math. Z.},
   volume={145},
   date={1975},
   number={3},
   pages={251--254},
}

\bib{CGT1}{article}{
   author={Concetti, Francesco},
   author={Toft, Joachim},
   title={Schatten-von Neumann properties for Fourier integral operators
   with non-smooth symbols \emph{I}},
   journal={Ark. Mat.},
   volume={47},
   date={2009},
   number={2},
   pages={295--312},
   issn={0004-2080},
   review={},
   doi={},
}

\bib{CGT2}{article}{
   author={Toft, Joachim},
   author={Concetti, Francesco},
   author={Garello, Gianluca},
   title={Schatten-von Neumann properties for Fourier integral operators
   with non-smooth symbols \emph{II}},
   journal={Osaka J. Math.},
   volume={47},
   date={2010},
   number={3},
   pages={739--786},
   issn={0030-6126},
   review={},
}		

\bib{CNR1}{article}{
   author={Cordero, Elena},
   author={Nicola, Fabio},
   author={Rodino, Luigi},
   title={Boundedness of Fourier integral operators on ${\scr F}L^p$ spaces},
   journal={Trans. Amer. Math. Soc.},
   volume={361},
   date={2009},
   number={11},
   pages={6049--6071},
   issn={0002-9947},
   review={},
   doi={},
}
	
\bib{CNR2}{article}{
   author={Cordero, Elena},
   author={Nicola, Fabio},
   author={Rodino, Luigi},
   title={On the global boundedness of Fourier integral operators},
   journal={Ann. Global Anal. Geom.},
   volume={38},
   date={2010},
   number={4},
   pages={373--398},
   issn={0232-704X},
   review={},
   doi={},
}
\bib{CR1}{article}{
   author={Coriasco, Sandro},
   author={Ruzhansky, Michael},
   title={On the boundedness of Fourier integral operators on $L^p(\mathbb
   R^n)$},
   journal={C. R. Math. Acad. Sci. Paris},
   volume={348},
   date={2010},
   number={15-16},
   pages={847--851},
   issn={1631-073X},
   review={},
   doi={},
}

\bib{CR2}{article}{
   author={Coriasco, Sandro},
   author={Ruzhansky, Michael},
   title={Global $L^p$ continuity of Fourier integral operators},
   journal={Trans. Amer. Math. Soc.},
   volume={366},
   date={2014},
   number={5},
   pages={2575--2596},
   issn={0002-9947},
   review={},
   doi={},
}

\bib{memoirs}{article}{
   author={Dos Santos Ferreira, David},
   author={Staubach, Wolfgang},
   title={Global and local regularity of Fourier integral operators on
   weighted and unweighted spaces},
   journal={Mem. Amer. Math. Soc.},
   volume={229},
   date={2014},
   number={1074},
}
\bib{GV}{article}{
   author={Ginibre, Jean},
   author={Velo, Giorgio},
   title={Generalized Strichartz inequalities for the wave equation},
   journal={J. Funct. Anal.},
   volume={133},
   date={1995},
   number={1},
   pages={50--68},
   issn={0022-1236},
}
\bib{Goldberg}{article}{
   author={Goldberg, David},
   title={A local version of real Hardy spaces},
   journal={Duke Math. J.},
   volume={46},
   date={1979},
   number={1},
   pages={27--42},
   issn={0012-7094},
   review={},
}
\bib{HPR}{article}{
   author={Hassell, Andrew},
    author={Portal, Pierre},
     author={Rozendaal, Jan},
   title={Off-singularity bounds and Hardy spaces for Fourier integral operators},
   
   journal={arXiv:1802.05932v4},
   volume={},
   date={2018},
   number={1},
   pages={},
   issn={},
   review={},
}
\bib{JohnsenMunchSickel}{article}{
   author={Johnsen, Jon},
   author={Munch Hansen, Sabrina},
   author={Sickel, Winfried},
   title={Anisotropic, mixed-norm Lizorkin-Triebel spaces and diffeomorphic
   maps},
   journal={J. Funct. Spaces},
   date={2014},
   pages={Art. ID 964794, 15},
   issn={2314-8896},
   review={},
}

\bib{Kalton}{article}{
   author={Kalton, Nigel},
   author={Mayboroda, Svitlana},
   author={Mitrea, Marius},
   title={Interpolation of Hardy-Sobolev-Besov-Triebel-Lizorkin spaces and
   applications to problems in partial differential equations},
   conference={
      title={Interpolation theory and applications},
   },
   book={
      series={Contemp. Math.},
      volume={445},
      publisher={Amer. Math. Soc., Providence, RI},
   },
   date={2007},
   pages={121--177},
}
\bib{Kapitanskii}{article}{
   author={Kapitanski\u\i , Lev V.},
   title={Estimates for norms in Besov and Lizorkin-Triebel spaces for
   solutions of second-order linear hyperbolic equations},
   language={Russian, with English summary},
   journal={Zap. Nauchn. Sem. Leningrad. Otdel. Mat. Inst. Steklov.
   (LOMI)},
   volume={171},
   date={1989},
   number={Kraev. Zadachi Mat. Fiz. i Smezh. Voprosy Teor. Funktsi\u\i . 20},
   pages={106--162, 185--186},
   issn={0373-2703},
   translation={
      journal={J. Soviet Math.},
      volume={56},
      date={1989},
      number={2},
      pages={2348--2389},
   },
}
\bib{Krantz}{article}{
   author={Krantz, Steven G.},
   title={Fractional integration on Hardy spaces},
   journal={Studia Math.},
   volume={73},
   date={1982},
   number={2},
   pages={87--94},
   issn={0039-3223},
   review={},
}
\bib{KS}{article}{
   author={Kenig, Carlos E.},
   author={Staubach, Wolfgang},
   title={$\Psi$-pseudodifferential operators and estimates for maximal
   oscillatory integrals},
   journal={Studia Math.},
   volume={183},
   date={2007},
   number={3},
   pages={249--258},
}
\bib{KN}{article}{
   author={Kohn, Joseph J.},
   author={Nirenberg, Louis},
   title={An algebra of pseudo-differential operators},
   journal={Comm. Pure Appl. Math.},
   volume={18},
   date={1965},
   pages={269--305},
}

\bib{Oberlin}{article}{
   author={Oberlin, Daniel M.},
   title={Translation-invariant operators on $L^{p}(G),$ $0<p<1$.
   II},
   journal={Canad. J. Math.},
   volume={29},
   date={1977},
   number={3},
   pages={626--630},
   issn={0008-414X},
}

\bib{Peetre}{article}{
    author= {Peetre, Jaak},
    title = {On spaces of Triebel-Lizorkin type},
   journal = {Ark. Mat.},
    volume = {13},
     date = {1975},
     pages = {123--130}
}
\bib{PelosoSecco}{article}{
   author={Peloso, Marco M.},
   author={Secco, Silvia},
   title={Local Riesz transforms characterization of local Hardy spaces},
   journal={Collect. Math.},
   volume={59},
   date={2008},
   number={3},
   pages={299--320},
   issn={0010-0757},
   review={},
}

\bib{Peral}{article}{
   author={Peral, Juan C.},
   title={$L^{p}$\ estimates for the wave equation},
   journal={J. Funct. Anal.},
   volume={36},
   date={1980},
   number={1},
   pages={114--145},
   issn={0022-1236},
   review={},
}
		
\bib{MONSTERIOSITY}{article}{
   author={Rodr\'\i guez-L\'opez, Salvador},
   author={Rule, David},
   author={Staubach, Wolfgang},
   title={A Seeger-Sogge-Stein theorem for bilinear Fourier integral
   operators},
   journal={Adv. Math.},
   volume={264},
   date={2014},
   pages={1--54}
   }
   
 \bib{multilinearfio}{article}{
   author={Rodr\'\i guez-L\'opez, Salvador},
   author={Rule, David},
   author={Staubach, Wolfgang},
   title={ Global boundedness of multilinear Fourier integral operators},
   journal={Preprint },
   volume={},
   date={2018},
   pages={}

}
\bib{JFA}{article}{
   author={Rodr\'\i guez-L\'opez, Salvador},
   author={Staubach, Wolfgang},
   title={Estimates for rough Fourier integral and pseudodifferential
   operators and applications to the boundedness of multilinear operators},
   journal={J. Funct. Anal.},
   volume={264},
   date={2013},
   number={10},
   pages={2356--2385}
}

\bib{Ruzhansky-Sugimoto}{article}{
   author={Ruzhansky, Michael},
   author={Sugimoto, Mitsuru},
   title={A local-to-global boundedness argument and Fourier integral
   operators},
   journal={J. Math. Anal. Appl.},
   volume={473},
   date={2019},
   number={2},
   pages={892--904},
   issn={0022-247X},
}

\bib{Triebel-Schmeisser}{book}{
   author={Schmeisser, Hans-J\"urgen},
   author={Triebel, Hans},
   title={Topics in Fourier analysis and function spaces},
   series={Mathematik und ihre Anwendungen in Physik und Technik},
   volume={42},
   publisher={Akademische Verlagsgesellschaft Geest \& Portig K.-G.,
   Leipzig},
   date={1987}
}
\bib{SSS}{article}{
   author={Seeger, Andreas},
   author={Sogge, Christopher D.},
   author={Stein, Elias M.},
   title={Regularity properties of Fourier integral operators},
   journal={Ann. of Math. (2)},
   volume={134},
   date={1991},
   number={2},
   pages={231--251}
}
\bib{Stein}{book}{
   author={Stein, Elias M.},
   title={Harmonic analysis: real-variable methods, orthogonality, and
   oscillatory integrals},
   series={Princeton Mathematical Series},
   volume={43},
   note={},
   publisher={Princeton University Press, Princeton, NJ},
   date={1993}
}

\bib{tao}{article}{
   author={Tao, Terence},
   title={The weak-type $(1,1)$ of Fourier integral operators of order
   $-(n-1)/2$},
   journal={J. Aust. Math. Soc.},
   volume={76},
   date={2004},
   number={1},
   pages={1--21},
   issn={1446-7887},
   review={},
}

\bib{Trie83}{book}{
   author={Triebel, Hans},
   title={Theory of function spaces},
   series={Mathematik und ihre Anwendungen in Physik und Technik},
   volume={38},
   publisher={Akademische Verlagsgesellschaft Geest \& Portig K.-G.,
   Leipzig},
   date={1983}
}
\bib{Triebelpseudo}{article}{
   author={Triebel, Hans},
   title={Pseudodifferential operators in $F^s_{p,q}$-spaces},
   journal={Z. Anal. Anwendungen},
   volume={6},
   date={1987},
   number={2},
   pages={143--150}
}

\bib{Triebel 2}{book}{
   author={Triebel, Hans},
   title={Theory of function spaces. $\mathrm{II}$},
   series={Monographs in Mathematics},
   volume={84},
   publisher={Birkh\"auser Verlag, Basel},
   date={1992},
   pages={viii+370},
   isbn={3-7643-2639-5},
   review={},
}

\bib{Triebel 3}{book}{
   author={Triebel, Hans},
   title={Theory of function spaces. $\mathrm{III}$},
   series={Monographs in Mathematics},
   volume={100},
   publisher={Birkh\"auser Verlag, Basel},
   date={2006},
   pages={xii+426},
   isbn={978-3-7643-7581-2},
   isbn={3-7643-7581-7},
   review={},
}

\end{biblist}

\end{document}